\theoremstyle{change}%
\newtheorem{definition}{Definition:}[section]%
\newtheorem{theorem}[definition]{Theorem:}%
\newtheorem{proposition}[definition]{Proposition:}%
\newtheorem{lemma}[definition]{Lemma:}%
\newtheorem{corollary}[definition]{Corollary:}%
{\theorembodyfont{\rmfamily} \newtheorem{remark}[definition]{Remark:}}%
{\theorembodyfont{\rmfamily} \newtheorem{example}[definition]{Example:}}%
\newenvironment{proof}
  {{\bf Proof:}}
  {\qquad \hspace*{\fill} $\Box$}%
\newcommand{\id}{\operatorname{id}}%
\newcommand{\inner}{\operatorname{int}}%
\newcommand{\tp}{\operatorname{top}}%
\newcommand{\sep}{\operatorname{sep}}%
\newcommand{\spn}{\operatorname{span}}%
\newcommand{\diam}{\operatorname{diam}}%
\newcommand{\dist}{\operatorname{dist}}%
\newcommand{\vB}{\mathbf{v}}
\newcommand{\wB}{\mathbf{w}}
\newcommand{\Graph}{\operatorname{Graph}}%
\newcommand{\AC}{\mathcal{A}}%
\newcommand{\EC}{\mathcal{E}}%
\newcommand{\LC}{\mathcal{L}}%
\newcommand{\NC}{\mathcal{N}}%
\newcommand{\PC}{\mathcal{P}}%
\newcommand{\QC}{\mathcal{Q}}%
\newcommand{\RC}{\mathcal{R}}%
\newcommand{\UC}{\mathcal{U}}%
\newcommand{\WC}{\mathcal{W}}%       
\newcommand{\N}{\mathbb{N}}%
\newcommand{\R}{\mathbb{R}}%
\newcommand{\C}{\mathbb{C}}%
\newcommand{\ep}{\varepsilon}%
\newcommand{\tm}{\times}%
\newcommand{\rmS}{\mathrm{S}}%
\newcommand{\Mis}{\mathrm{M}}%
\begin{document}

\title{Some results on the entropy of nonautonomous dynamical systems}%
\author{Christoph Kawan\thanks{Partly supported by DFG fellowship KA 3893/1-1. CK thanks Tomasz Downarowicz and Sergiy Kolyada for helpful comments.}\\
Universit\"at Passau, Fakult\"at f\"ur Informatik und Mathematik\\ Innstra{\ss}e 33, 94032 Passau, Germany\\
christoph.kawan@uni-passau.de
 \and Yuri Latushkin\thanks{Supported by the NSF grant  DMS-1067929, by the Research Board and Research Council of the University of Missouri, and by the Simons Foundation. YL sincerely thanks Lai-Sang Young for the opportunity to spend the academic year 2014/15 at the Courant Institute of Mathematical Sciences and for many illuminating discussions.}\\
 Department of Mathematics, University of Missouri\\
 Columbia, MO 65211, USA\\
 latushkiny@missouri.edu}%
\maketitle%

\begin{abstract}
In this paper we advance the entropy theory of discrete nonautonomous dynamical systems that was initiated by Kolyada and Snoha in 1996. The first part of the paper is devoted to the measure-theoretic entropy theory of general topological systems. We derive several conditions guaranteeing that an initial probability measure, when pushed forward by the system, produces an invariant measure sequence whose entropy captures the dynamics on arbitrarily fine scales. In the second part of the paper, we apply the general theory to the nonstationary subshifts of finite type, introduced by Fisher and Arnoux. In particular, we give sufficient conditions for the variational principle, relating the topological and measure-theoretic entropy, to hold.%
\end{abstract}

{\small {\bf Keywords}: Nonautonomous dynamical system; topological entropy; metric entropy; nonstationary subshift of finite type}%

\section{Introduction}%

In recent years, properties of discrete nonautonomous dynamical systems (also called \emph{nonautonomous discrete systems}, \emph{sequential/nonstationary/time-dependent dynamical systems} or \emph{mapping families}) have been studied a great deal. While a discrete autonomous or classical dynamical system is given by the iterations of a single map $f:X \rightarrow X$, the dynamics of a nonautonomous system is generated by compositions of different maps. Given a sequence $f_n:X \rightarrow X$, $n=0,1,2,\ldots$, the trajectory of an initial point $x_0 \in X$ is the sequence defined by $x_n = f_{n-1}(x_{n-1})$. Without severe restrictions on the sequence $(f_n)_{n=0}^{\infty}$ such as periodicity or stationarity with respect to some probability distribution, it is intuitively clear that the range of phenomena to be observed in such systems is largely broader than what can be seen in autonomous dynamics. Many concepts used intensively in classical dynamics such as periodicity and recurrence do not seem to make sense for nonautonomous systems, while other concepts such as correlation decay, hyperbolicity or entropy can be generalized and to some extent are still useful to describe the properties of such systems (see \cite{AFi,Fis,KSn,KMS,LYo,OSY}, for instance).%

In this paper, we concentrate on the study of metric (i.e., measure-theoretic) and topological entropy. For a sequence of continuous maps on a compact space, topological entropy was introduced in \cite{KSn} and generalized to systems with time-varying state space (i.e., $f_n:X_n\rightarrow X_{n+1}$) in \cite{KMS}. Measure-theoretic notions of entropy were first established in \cite{Can,Ka1}. While the notion introduced in \cite{Can} still assumes a stationary measure, which for a nonautonomous system only exists under very restrictive assumptions, the notion in \cite{Ka1} is completely general, though not canonical, since it depends on a given class of sequences of measurable partitions satisfying certain axioms. However, for a topological system, given by an equicontinuous sequence of maps $f_n:X_n\rightarrow X_{n+1}$ between compact metric spaces, a canonical class can be defined and the associated metric entropy of any invariant sequence of Borel probability measures on the spaces $X_n$ does not exceed the topological entropy (cf.~\cite[Thm.~28]{Ka1}). This establishes one (usually considered the easier) part of the variational principle. The question for which class of systems the inequality between metric and topological entropy can be extended to a full variational principle is still far from an answer. A first obstacle to a generalization of the classical proofs is that in general it is not known whether the allowed class of measurable partitions contains elements of arbitrarily small diameters.%

In the first part of the paper, we thus aim at a characterization of those invariant measure sequences (shortly IMS) which allow for arbitrarily fine scales in the computation of their entropy (called \emph{fine-scale IMS}). We are able to provide several sufficient conditions, which however all have their limitations, meaning that they are far from being necessary. In fact, we have to leave the question, whether every IMS (under obviously necessary conditions) has such a property, open. Nevertheless, we can show that a system has at least one fine-scale IMS if and only if the sequence $(X_n)_{n=0}^{\infty}$ of compact metric spaces is uniformly totally bounded. Furthermore, we prove a product theorem for the entropy of systems with fine-scale IMS and we show that our notion of metric entropy indeed generalizes the classical notion of Kolmogorov and Sinai.%

The second part of the paper is devoted to the computation of metric and topological entropy of nonstationary subshifts of finite type (shortly, NSFT's), that were first investigated in \cite{AFi,Fis} with the motivation to study uniformly hyperbolic mapping sequences via coding and to deduce properties of adic transformations, respectively. While a classical subshift of finite type is given by a finite alphabet $\AC$ and a transition matrix $L$, in the nonstationary case time-varying alphabets and matrices, i.e., sequences $(\AC_n)_{n=0}^{\infty}$ and $(L_n)_{n=0}^{\infty}$, are allowed. The dynamics is still given by the left shift operator, but now the domain of this operator is no longer invariant but also varies in time. That is, we have a sequence of maps between different sequence spaces, and each map is a restriction of the shift. Putting the word metric on each of the spaces, we obtain a formula for the topological entropy that is completely analogous to the classical one, namely%
\begin{equation*}
  h_{\tp} = \limsup_{n\rightarrow\infty}\frac{1}{n}\log\|L_0 L_1 \cdots L_{n-1}\|,%
\end{equation*}
which reduces to the $\log$ of the spectral radius of $L$ in the stationary case. For the metric entropy we can show that every IMS of an NSFT is a fine-scale IMS. In the explicit computation we concentrate on a special IMS introduced by Fisher that he constructed analogously to the Parry measure in the classical theory. While the Parry measure is the unique measure of maximal entropy for a stationary subshift, for a Parry measure sequence (which is not even uniquely defined) we can only show a similar result under additional conditions on the sequence $(L_n)_{n=0}^{\infty}$. For instance, a simple sufficient condition implying the equality of metric and topological entropy (and hence the validity of the variational principle) is that the sequence $(L_n)$ is uniformly primitive, i.e., there is an integer $m$, such that for each $i$ all entries of $L_iL_{i+1}\cdots L_{i+m}$ are strictly positive.%

The paper is organized as follows: In Section \ref{sec_prelim} we introduce notation, give the definitions of the main quantities, and recall some of their properties. Section \ref{sec_suff_fam} contains the results on general topological nonautonomous systems, in particular the results on fine-scale IMS. The main results are Theorem \ref{thm_mecomp} about the computation of the metric entropy on admissible sequences of arbitrary small diameters and Theorem \ref{thm_constcond} about sufficient conditions for the existence of such sequences. Section \ref{sec_nsfts} contains the results about nonstationary subshifts of finite type, in particular Theorems \ref{thm_te_formula}, \ref{thm_nsft_me1} and \ref{thm_nsft_me2} about their entropy.%

\section{Preliminaries}\label{sec_prelim}%

A \emph{nonautonomous dynamical system} (an NDS, for short) is given by a pair $(X_{\infty},f_{\infty})$, where $X_{\infty} = (X_n)_{n=0}^{\infty}$ is a sequence of sets and $f_{\infty} = (f_n)_{n=0}^{\infty}$ a sequence of maps $f_n:X_n \rightarrow X_{n+1}$. For all $k\geq0$ and $n\geq1$ we put%
\begin{equation*}
  f_k^n := f_{k+n-1}  \circ \cdots \circ f_{k+1} \circ f_k,\quad f_k^0 := \id_{X_k}.%
\end{equation*}
Moreover, we define $f_k^{-n} := (f_k^n)^{-1}$, which is only applied to sets. (We do not assume that the maps $f_n$ are invertible.) The \emph{trajectory} of a point $x\in X_0$ is the sequence $(f_0^n(x))_{n=0}^{\infty}$. By $(X_{n,\infty},f_{n,\infty})$ we denote the pair of shifted sequences $X_{n,\infty} = (X_{n+k})_{k=0}^{\infty}$, $f_{n,\infty} = (f_{n+k})_{k=0}^{\infty}$ and we use similar notation for other sequences associated with an NDS. If such a sequence is constant, we drop the subscript ``$\infty$'', e.g., we write $X$ instead of $X_{\infty}$ if $X_n \equiv X$. If each $X_n$ is a compact metric space with associated metric $d_n$ and the sequence $f_{\infty}$ is equicontinuous, meaning that for every $\ep>0$ there is $\delta>0$ such that the implication $d_n(x,y) < \delta$ $\Rightarrow$ $d_{n+1}(f_n(x),f_n(y)) < \ep$ holds for all $x,y\in X_n$ and $n\geq0$, we speak of a \emph{topological NDS}. The assumption of equicontinuity rather than mere continuity of each $f_n$ is usually not part of the definition of a topological NDS (see \cite{Fis,KSn}). However, since it is essential for most results about topological entropy, including the power rule and the relation to metric entropy, we include it in the definition.%

For a topological NDS, the \emph{topological entropy} $h_{\tp}(f_{\infty})$ is defined as follows. For each sequence $\UC_{\infty} = (\UC_n)_{n=0}^{\infty}$ of open covers $\UC_n$ of $X_n$ we put%
\begin{equation*}
  h_{\tp}(f_{\infty};\UC_{\infty}) := \limsup_{n\rightarrow\infty}\frac{1}{n}\log\NC\Big(\bigvee_{i=0}^{n-1}f_0^{-i}\UC_i\Big),%
\end{equation*}
where $\bigvee$ is the usual join operation for open covers and $\NC(\cdot)$ stands for the minimal number of elements in a finite subcover. We write $\LC(X_{\infty})$ for the family of all sequences $\UC_{\infty}$ of open covers whose Lebesgue numbers are bounded away from zero and define the topological entropy of $f_{\infty}$ by%
\begin{equation}\label{eq_deftopent}
  h_{\tp}(f_{\infty}) := \sup_{\UC_{\infty}\in\LC(X_{\infty})}h_{\tp}(f_{\infty};\UC_{\infty}).%
\end{equation}
Unlike in the autonomous case, this quantity depends on the metrics $d_n$, and not only on the topologies of the spaces $X_n$. However, in the autonomous case, \eqref{eq_deftopent} gives the classical notion of topological entropy, which can be seen from the fact that $h_{\tp}(f_{\infty})$ can also be defined via $(n,\ep)$-separated or $(n,\ep)$-spanning sets as follows. We introduce the \emph{Bowen-metrics}%
\begin{equation*}
  d_{k,n}(x,y) := \max_{0 \leq i \leq n}d_{k+i}(f_k^i(x),f_k^i(y)),\quad k\geq0,\ n\geq0,%
\end{equation*}
on each of the spaces $X_k$. A subset $E \subset X_k$ is called \emph{$(n,\ep;f_{k,\infty})$-separated} if $d_{k,n}(x,y) \geq \ep$ holds for each two distinct points $x,y\in E$. A set $F \subset X_k$ \emph{$(n,\ep;f_{k,\infty})$-spans} another set $K \subset X_k$ if for each $x\in K$ there is $y\in F$ with $d_{k,n}(x,y) < \ep$. In the case $k=0$, we also speak of $(n,\ep)$-separated and $(n,\ep)$-spanning sets. We write $r_{\sep}(n,\ep;f_{k,\infty})$ for the maximal cardinality of an $(n,\ep;f_{k,\infty})$-separated set and $r_{\spn}(n,\ep;f_{k,\infty})$ for the minimal cardinality of set which $(n,\ep;f_{k,\infty})$-spans $X_k$. For each $\ep>0$ we define the numbers%
\begin{align*}
  h_{\sep}(\ep,f_{\infty}) &:= \limsup_{n\rightarrow\infty}\frac{1}{n}\log r_{\sep}(n,\ep;f_{\infty}),\\
	h_{\spn}(\ep,f_{\infty}) &:= \limsup_{n\rightarrow\infty}\frac{1}{n}\log r_{\spn}(n,\ep;f_{\infty}).%
\end{align*}
As in the autonomous case, it can be shown that (cf.~\cite{KSn,KMS})%
\begin{equation*}
  h_{\tp}(f_{\infty}) = \lim_{\ep\searrow0}h_{\sep}(\ep,f_{\infty}) = \lim_{\ep\searrow0}h_{\spn}(\ep,f_{\infty}),%
\end{equation*}
where the limits can be replaced by $\sup_{\ep>0}$.%

Let $\mu_{\infty} = (\mu_n)_{n=0}^{\infty}$ be a sequence of Borel probability measures for the metric spaces $X_n$ such that $f_n\mu_n \equiv \mu_{n+1}$, where $f_n$ here denotes the push-forward operator on measures, defined by $(f_n\mu)(A) := \mu(f_n^{-1}(A))$. We call such $\mu_{\infty}$ an \emph{invariant measure sequence (IMS)} for $(X_{\infty},f_{\infty})$ and associate to $(X_{\infty},f_{\infty},\mu_{\infty})$ a family $\EC_{\Mis} = \EC_{\Mis}(\mu_{\infty})$ of sequences of finite Borel partitions as follows. A sequence $\PC_{\infty} = (\PC_n)_{n=0}^{\infty}$, where $\PC_n = \{P_{n,1},\ldots,P_{n,k_n}\}$ is a Borel partition of $X_n$, belongs to $\EC_{\Mis}$ iff the sequence $(k_n)_{n=0}^{\infty}$ is bounded and for each $\ep>0$ there are $\delta>0$ and compact $K_{n,i} \subset P_{n,i}$ such that for all $n\geq0$ the following hold:%
\begin{enumerate}
\item[(a)] $\mu_n(P_{n,i} \backslash K_{n,i}) \leq \ep$ for $i = 1,\ldots,k_n$,%
\item[(b)] for all $1 \leq i < j \leq k_n$,%
\begin{equation*}
  D_n(K_{n,i},K_{n,j}) := \min_{(x,y) \in K_{n,i} \tm K_{n,j}} d_n(x,y) \geq \delta.%
\end{equation*}
\end{enumerate}
We call the sequences $\PC_{\infty}\in\EC_{\Mis}$ \emph{admissible}. The \emph{metric entropy of $f_{\infty}$ w.r.t.~$\PC_{\infty}$} for an arbitrary sequence $\PC_{\infty}$ of finite Borel partitions $\PC_n$ is defined as%
\begin{equation*}
  h(f_{\infty},\mu_{\infty};\PC_{\infty}) := \limsup_{n\rightarrow\infty}\frac{1}{n}H_{\mu_0}\Big(\bigvee_{i=0}^{n-1}f_0^{-i}\PC_i\Big),%
\end{equation*}
where $H_{\mu_0}(\PC) = - \sum_{P\in\PC}\mu_0(P)\log\mu_0(P)$ is the usual entropy of a partition. If the IMS $\mu_{\infty}$ is clear from the context, we will often omit this argument. The \emph{metric entropy of $f_{\infty}$ w.r.t.~$\mu_{\infty}$} is given by%
\begin{equation}\label{eq_defme}
  h(f_{\infty},\mu_{\infty}) := \sup_{\PC_{\infty}\in\EC_{\Mis}}h(f_{\infty},\mu_{\infty};\PC_{\infty}).%
\end{equation}
In \cite[Thm.~28]{Ka1} the inequality%
\begin{equation}\label{eq_varineq}
  h(f_{\infty},\mu_{\infty}) \leq h_{\tp}(f_{\infty})%
\end{equation}
was proved. If the equality%
\begin{equation}\label{eq_fvp}
  h_{\tp}(f_{\infty}) = \sup_{\mu_{\infty}}h(f_{\infty},\mu_{\infty})%
\end{equation}
is satisfied, the supremum taken over all IMS, we say that $(X_{\infty},f_{\infty})$ satisfies a \emph{full variational principle}.%

Two topological NDS $(X_{\infty},f_{\infty})$ and $(Y_{\infty},g_{\infty})$ are called \emph{equi-semiconjugate} if there exists an equicontinuous sequence $\pi_{\infty} = (\pi_n)_{n=0}^{\infty}$ of surjective maps $\pi_n:X_n \rightarrow Y_n$ such that $\pi_{n+1} \circ f_n = g_n \circ \pi_n$ for all $n\ge0$. The sequence $\pi_{\infty}$ is then called an \emph{equi-semiconjugacy}. In this case, $h_{\tp}(g_{\infty}) \leq h_{\tp}(f_{\infty})$. If the sequence $\pi_{\infty}$ consists of homeomorphisms and also $(\pi_n^{-1})_{n=0}^{\infty}$ is equicontinuous, then $\pi_{\infty}$ is called an \emph{equi-conjugacy} and $h_{\tp}(f_{\infty}) = h_{\tp}(g_{\infty})$.%

We end this section with the following simple observation:%

\begin{proposition}\label{prop_te_timeindep}
Let $(X_{\infty},f_{\infty})$ be a topological NDS such that each $f_n$ is surjective. Then $h_{\tp}(f_{k,\infty}) = h_{\tp}(f_{l,\infty})$ for all $k,l \geq 0$.%
\end{proposition}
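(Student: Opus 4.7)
The plan is to reduce the claim to the single-step equality $h_{\tp}(f_{k,\infty}) = h_{\tp}(f_{k+1,\infty})$ for every $k \geq 0$ and then iterate. Both inequalities are proved separately.

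For the inequality $h_{\tp}(f_{k+1,\infty}) \leq h_{\tp}(f_{k,\infty})$, I would exhibit $\pi_\infty := (f_{k+n})_{n=0}^{\infty}$, with $\pi_n = f_{k+n} : X_{k+n} \to X_{k+1+n}$, as an equi-semiconjugacy from $(X_{k,\infty}, f_{k,\infty})$ onto $(X_{k+1,\infty}, f_{k+1,\infty})$. It is equicontinuous as a subsequence of the equicontinuous family $f_\infty$, surjective by the standing hypothesis, and the intertwining $\pi_{n+1} \circ f_{k+n} = f_{k+1+n} \circ \pi_n$ collapses to the tautology $f_{k+n+1} \circ f_{k+n} = f_{k+n+1} \circ f_{k+n}$. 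Applying the paper's preliminary observation that equi-semiconjugacies do not increase topological entropy yields the bound. This is where the surjectivity of each $f_n$ is genuinely used.

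For the reverse inequality $h_{\tp}(f_{k,\infty}) \leq h_{\tp}(f_{k+1,\infty})$ (which in fact does not require surjectivity) I would work directly with open covers. Given $\UC_\infty = (\UC_i)_{i \geq 0} \in \LC(X_{k,\infty})$, set $\mathcal{V}_j := \UC_{j+1}$, so that $\mathcal{V}_\infty \in \LC(X_{k+1,\infty})$ since its Lebesgue numbers are a subsequence of those of $\UC_\infty$. Using $f_k^i = f_{k+1}^{i-1} \circ f_k$ for $i \geq 1$, I would verify the decomposition
\[
  \bigvee_{i=0}^{n-1} f_k^{-i}\UC_i \;=\; \UC_0 \vee f_k^{-1}\!\left(\bigvee_{j=0}^{n-2} f_{k+1}^{-j}\mathcal{V}_j\right),
\]
and then combine submultiplicativity $\NC(\AC \vee \BC) \leq \NC(\AC)\NC(\BC)$ with the trivial bound $\NC(f_k^{-1}\WC) \leq \NC(\WC)$ to get
\[
  \NC\!\left(\bigvee_{i=0}^{n-1} f_k^{-i}\UC_i\right) \;\leq\; \NC(\UC_0) \cdot \NC\!\left(\bigvee_{j=0}^{n-2} f_{k+1}^{-j}\mathcal{V}_j\right).
\]
Since $\NC(\UC_0)$ is independent of $n$, passing to $\tfrac{1}{n}\log$ and $\limsup$ produces $h_{\tp}(f_{k,\infty}; \UC_\infty) \leq h_{\tp}(f_{k+1,\infty}; \mathcal{V}_\infty)$, and a supremum over $\UC_\infty$ completes this direction.

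I do not anticipate any serious obstacle. The main subtleties are the index bookkeeping in the join decomposition above and, in the first direction, the recognition that the surjectivity hypothesis is exactly what upgrades the tautological $\pi_\infty$ to a genuine equi-semiconjugacy.
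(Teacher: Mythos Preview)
Your proposal is correct and follows essentially the same two-step strategy as the paper: the equi-semiconjugacy argument for $h_{\tp}(f_{k+1,\infty}) \leq h_{\tp}(f_{k,\infty})$ is identical to the paper's, and your direct open-cover computation for the reverse inequality is exactly the content of \cite[Lem.~4.5]{KSn}, which the paper simply cites rather than reproving. Your observation that surjectivity is needed only for the first direction is also correct.
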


\begin{proof}
By \cite[Lem.~4.5]{KSn} we have $h_{\tp}(f_{k,\infty}) \leq h_{\tp}(f_{k+1,\infty})$ for all $k\geq0$. The other inequality follows from the fact that the sequence $f_{k,\infty}$ is an equi-semiconjugacy from $f_{k,\infty}$ to $f_{k+1,\infty}$, i.e., from the commutative diagram%
\begin{eqnarray*}
\UseTips
\newdir{ >}{!/-5pt/\dir{>}}
  \xymatrix @=1pc @*[c] {
 X_k
   \ar[rr]^{f_k}
   \ar[dd]_{f_k}
 & & X_{k+1}
   \ar[rr]^{f_{k+1}}
   \ar[dd]_{f_{k+1}}
 & & X_{k+2}
   \ar[rr]^{f_{k+2}}
   \ar[dd]_{f_{k+2}}
 & & \ldots \\ \\
  X_{k+1}
   \ar[rr]_{f_{k+1}}
& & X_{k+2}
    \ar[rr]_{f_{k+2}}
& & X_{k+3}
    \ar[rr]_{f_{k+3}}
& & \ldots
  }
\end{eqnarray*}
This implies $h_{\tp}(f_{k+1,\infty}) \leq h_{\tp}(f_{k,\infty})$ for all $k\geq0$.
\end{proof}

\section{Fine-scale sequences}\label{sec_suff_fam}

In this section, we first prove a result which in many cases reduces the computation of the metric entropy $h(f_{\infty},\mu_{\infty})$ to the computation of $h(f_{\infty},\mu_{\infty};\PC_{\infty})$ for countably many $\PC_{\infty}$. The proof of this result is based on the inequality%
\begin{equation*}
  h(f_{\infty};\PC_{\infty}) \leq h(f_{\infty};\QC_{\infty}) + \limsup_{n\rightarrow\infty}\frac{1}{n}\sum_{i=0}^{n-1}H_{\mu_i}(\PC_i|\QC_i),%
\end{equation*}
that was shown in \cite[Prop.~9(vi)]{Ka1} and will be used to approximate $h(f_{\infty};\PC_{\infty})$ for arbitrary $\PC_{\infty} \in \EC_{\Mis}$ by the entropy on the elements $\QC_{\infty}$ of a countable subset of $\EC_{\Mis}$. To estimate the conditional entropy $H_{\mu_i}(\PC_i|\QC_i)$ we will use a result saying that the Rokhlin metric on the set of all partitions (of a probability space) of fixed cardinality $m$ is equivalent to another metric defined via the measures of symmetric set differences. For us, the crucial point of this result is that this equivalence is uniform with respect to the probability space (at least in one direction). This is made precise in the following lemma whose proof can be found in \cite[Prop.~4.3.5]{KHa}.%

\begin{lemma}\label{lem_metriceq}
For every integer $m\geq 2$ and $\ep>0$ there exists $\delta>0$ such that the following holds: Let $(X,\AC,\mu)$ be a probability space and let $\PC = \{P_1,\ldots,P_m\}$, $\QC = \{Q_1,\ldots,Q_m\}$ be two measurable partitions of $X$. Then the implication%
\begin{equation*}
  \inf_{\sigma}\sum_{i=1}^m \mu(P_i \triangle Q_{\sigma(i)}) < \delta \quad\Rightarrow\quad H_{\mu}(\PC|\QC) + H_{\mu}(\QC|\PC) < \ep%
\end{equation*}
holds, where the infimum is taken over all permutations $\sigma$ of $\{1,\ldots,m\}$.%
\end{lemma}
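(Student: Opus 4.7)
The plan is to reduce the lemma to a compactness argument on a finite-dimensional simplex. Observe that both sides of the implication depend on $(\PC,\QC,\mu)$ only through the $m^2$ joint-atom masses $a_{ij}:=\mu(P_i\cap Q_j)$. Let $\Delta\subset\R^{m^2}$ denote the simplex of probability vectors on $\{1,\ldots,m\}^2$, and for $a\in\Delta$ set
\begin{equation*}
G(a) := \min_\sigma \sum_{i=1}^m\Big(\sum_{j\neq\sigma(i)} a_{ij} + \sum_{k\neq i} a_{k\sigma(i)}\Big),
\end{equation*}
which equals $\inf_\sigma \sum_i \mu(P_i\triangle Q_{\sigma(i)})$, and
\begin{equation*}
F(a) := -\sum_{i,j} a_{ij}\log\frac{a_{ij}}{\sum_k a_{kj}} \;-\; \sum_{i,j} a_{ij}\log\frac{a_{ij}}{\sum_k a_{ik}} = H_\mu(\PC|\QC)+H_\mu(\QC|\PC),
\end{equation*}
with the convention $0\log 0=0$ and the obvious interpretation when a marginal vanishes.

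First I would verify that $F$ and $G$ extend to continuous functions on the compact simplex $\Delta$. For $G$ this is immediate, being a minimum of finitely many linear functions. For $F$, each summand $-a_{ij}\log(a_{ij}/s_j)$ with $s_j=\sum_k a_{kj}$ is bounded by $s_j\log m$ and tends to $0$ as $a_{ij}\to 0$; in the degenerate case $s_j=0$, necessarily $a_{ij}=0$ for every $i$, so the $j$-th block contributes nothing. A symmetric argument handles the second sum.

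The key step is to identify the zero set of $G$. If $G(a)=0$, then some permutation $\sigma$ satisfies $a_{ij}=0$ for all $j\neq\sigma(i)$, so the joint law is supported on a permuted diagonal. For such $a$, every non-vanishing conditional distribution $a_{\cdot j}/s_j$ or $a_{i\cdot}/r_i$ (with $r_i=\sum_j a_{ij}$) is a point mass, whence $F(a)=0$. Consequently the closed set $K_\ep:=\{a\in\Delta:F(a)\geq\ep\}$ is compact and disjoint from the zero locus of $G$, so $G$ attains a strictly positive minimum $\delta$ on $K_\ep$. This $\delta$ satisfies the lemma: $G(a)<\delta$ forces $a\notin K_\ep$, i.e.\ $F(a)<\ep$, and the bound depends only on $m$ and $\ep$, not on the underlying probability space.

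The main technical wrinkle is the continuous extension of $F$ across points of $\Delta$ where some marginal vanishes; the paragraph above handles it, but the case distinction should be written out cleanly in the full proof. Beyond that, the argument is pure soft analysis, compactness plus continuity, which is precisely what makes the constant $\delta$ uniform in the underlying measure space.
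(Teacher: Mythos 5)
Your argument is correct. The reduction of both quantities to functions of the joint distribution $a_{ij}=\mu(P_i\cap Q_j)$ on the simplex $\Delta\subset\R^{m^2}$ is legitimate, the continuity of $F$ across degenerate marginals is handled properly (the $j$-th block is dominated by $s_j\log m$, which vanishes with $s_j$), the identification of $\{G=0\}$ with the permuted-diagonal distributions is right, and the compactness step delivers a $\delta$ depending only on $m$ and $\ep$, which is exactly the uniformity the lemma demands. Note, however, that the paper does not prove this lemma itself; it cites \cite[Prop.~4.3.5]{KHa}, and the argument there is of a different character: one fixes the optimal permutation, observes that the partitions agree off a set of measure less than $\delta$, and bounds $H_{\mu}(\PC|\QC)$ by an explicit expression of the form $t\log(m-1)+h(t)$ with $t=\sum_i\mu(P_i\triangle Q_{\sigma(i)})$ and $h$ the binary entropy function, which tends to $0$ with $t$. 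That route produces a concrete, computable $\delta(\ep,m)$; your compactness argument is shorter and arguably cleaner but non-quantitative, since the minimum of $G$ on $K_\ep$ is not exhibited. For the use made of the lemma in Theorem \ref{thm_mecomp}, where only the existence of some $\delta=\delta(m+1,\ep)$ is needed, either proof suffices.
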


\begin{definition}
If $(X_n,d_n)_{n=0}^{\infty}$ is a sequence of compact metric spaces and $\PC_{\infty} = (\PC_n)_{n=0}^{\infty}$ an associated sequence of partitions, we call%
\begin{equation*}
  \diam\PC_{\infty} := \sup_{n\geq0}\sup_{P\in\PC_n}\diam P,%
\end{equation*}
the \emph{diameter} of $\PC_{\infty}$, where $\diam P = \sup_{x,y\in P}d_n(x,y)$.%
\end{definition}

\begin{theorem}\label{thm_mecomp}
Let $(X_{\infty},f_{\infty})$ be a topological NDS with an IMS $\mu_{\infty}$. Assume that there exists a sequence $(\RC_{\infty}^k)_{k=0}^{\infty}$ with $\RC_{\infty}^k \in \EC_{\Mis}(\mu_{\infty})$ such that%
\begin{equation*}
  \lim_{k\rightarrow\infty}\diam\RC_{\infty}^k = 0.%
\end{equation*}
Then the metric entropy satisfies%
\begin{equation*}
  h(f_{\infty},\mu_{\infty}) = \sup_{k\geq0}h(f_{\infty},\mu_{\infty};\RC_{\infty}^k) = \lim_{k\rightarrow\infty}h(f_{\infty},\mu_{\infty};\RC_{\infty}^k).%
\end{equation*}
\end{theorem}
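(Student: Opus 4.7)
My strategy is to approximate an arbitrary admissible $\PC_\infty \in \EC_{\Mis}(\mu_\infty)$ in the Rokhlin sense by a coarsening of some $\RC_\infty^k$, and then invoke the conditional-entropy inequality of \cite[Prop.~9(vi)]{Ka1} quoted just before Lemma~\ref{lem_metriceq}. Let $\PC_\infty$ have cardinality bound $M := \sup_n k_n < \infty$ (we may assume $M\geq 2$, the case $M=1$ being trivial), and fix $\ep''>0$. I apply Lemma~\ref{lem_metriceq} for each $m \in \{2,\ldots,M\}$ and take the minimum of the resulting thresholds to obtain one $\delta'>0$ valid for all cardinalities that can occur. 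Admissibility of $\PC_\infty$ applied with $\ep := \delta'/(2M)$ supplies $\delta>0$ and compact cores $K_{n,i} \subset P_{n,i}$ satisfying (a) and (b), and I then pick $k$ so that $\diam\RC_\infty^k < \delta$.

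\textbf{Coarsening and uniform conditional-entropy bound.} Because $\diam R < \delta$, each $R \in \RC_n^k$ meets at most one core $K_{n,i}$. Define $\tilde\RC_n = \{\tilde R_{n,1},\ldots,\tilde R_{n,k_n}\}$ by letting $\tilde R_{n,i}$ be the union of those $R \in \RC_n^k$ that meet $K_{n,i}$, and by dumping the core-missing elements into $\tilde R_{n,1}$. Then $\tilde\RC_n$ has the same cardinality as $\PC_n$ and is coarser than $\RC_n^k$ coordinatewise, so $h(f_\infty;\tilde\RC_\infty) \leq h(f_\infty;\RC_\infty^k)$. Since every $x \in K_{n,j}$ is placed in $\tilde R_{n,j}$ by construction, one has $\bigcup_i (\tilde R_{n,i}\setminus K_{n,i}) = \bigcup_i (P_{n,i}\setminus K_{n,i})$, whence
\begin{equation*}
  \sum_{i=1}^{k_n}\mu_n(P_{n,i}\triangle \tilde R_{n,i}) \leq 2\sum_{i=1}^{k_n}\mu_n(P_{n,i}\setminus K_{n,i}) \leq 2M\ep = \delta'
\end{equation*}
for every $n$. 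Lemma~\ref{lem_metriceq} (applied with $\sigma=\id$) then gives $H_{\mu_n}(\PC_n|\tilde\RC_n) < \ep''$ \emph{uniformly in $n$}, which is exactly the form needed for the Ka1 inequality.

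\textbf{Conclusion.} Plugging these two estimates into the quoted inequality yields
\begin{equation*}
  h(f_\infty;\PC_\infty) \leq h(f_\infty;\tilde\RC_\infty) + \ep'' \leq \sup_{k}h(f_\infty;\RC_\infty^k) + \ep''.
\end{equation*}
Sending $\ep''\searrow 0$ and then taking the supremum over $\PC_\infty \in \EC_{\Mis}$ gives $h(f_\infty,\mu_\infty) \leq \sup_k h(f_\infty;\RC_\infty^k)$; the reverse inequality is immediate from the definition of $h(f_\infty,\mu_\infty)$. To upgrade the $\sup$ to a $\lim$, I re-run the same approximation with $\PC_\infty := \RC_\infty^{k_0}$ for any fixed $k_0$: this shows $h(f_\infty;\RC_\infty^{k_0}) \leq h(f_\infty;\RC_\infty^k) + \ep''$ whenever $\diam\RC_\infty^k$ is sufficiently small, so $\liminf_k h(f_\infty;\RC_\infty^k) \geq \sup_{k_0}h(f_\infty;\RC_\infty^{k_0})$. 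The main obstacle is arranging that $\tilde\RC_n$ has exactly $k_n$ classes so that Lemma~\ref{lem_metriceq} applies with \emph{$n$-independent} constants; this is precisely what forces the merge-by-core recipe rather than any attempt to use $\RC_n^k$ directly.
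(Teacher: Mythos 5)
Your proof is correct and follows essentially the same route as the paper: both approximate an arbitrary admissible $\PC_\infty$ by a coarsening of $\RC_\infty^k$ obtained by grouping the elements of $\RC_n^k$ according to which compact core $K_{n,i}$ they meet, then combine the refinement monotonicity and conditional-entropy inequality from \cite[Prop.~9]{Ka1} with the uniform Rokhlin-metric estimate of Lemma~\ref{lem_metriceq}. The only differences are cosmetic (the paper collects the core-missing elements into an extra $(m+1)$-st class and pads $\PC_n$ with the empty set, and upgrades the supremum to a limit by choosing a near-optimal $\PC_\infty$ rather than re-running the estimate with $\PC_\infty=\RC_\infty^{k_0}$); your bookkeeping of the symmetric-difference sum is, if anything, slightly more careful.
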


\begin{proof}
Since we assume $\RC_{\infty}^k \in \EC_{\Mis}(\mu_{\infty})$, we clearly have%
\begin{equation*}
  h(f_{\infty},\mu_{\infty}) \geq \sup_{k\geq0}h(f_{\infty},\mu_{\infty};\RC_{\infty}^k).%
\end{equation*}
To prove the converse inequality, consider an arbitrary $\PC_{\infty} \in \EC_{\Mis}(\mu_{\infty})$. By adding sets of measure zero, we may assume that each $\PC_n$ has the same number $m$ of elements, say $\PC_n = \{P_{n,1},\ldots,P_{n,m}\}$. For a given $\ep>0$ we choose $\delta = \delta(m+1,\ep)$ according to Lemma \ref{lem_metriceq}. Since $\PC_{\infty}\in\EC_{\Mis}$, there exist $\rho = \rho(\delta) > 0$ and compact sets $K_{n,i}\subset P_{n,i}$ such that $\mu_n(P_{n,i}\backslash K_{n,i}) \leq \delta/m$ and $D_n(K_{n,i},K_{n,j}) \geq \rho$ for $i\neq j$. Choose $k$ large enough so that $\diam\RC^k_{\infty} < \rho/2$. For each of the sets $K_{n,i}$ consider the union of all elements of $\RC^k_n$ that have nonempty intersection with $K_{n,i}$ and denote this union by $Q_{n,i}$. This implies $Q_{n,i} \cap Q_{n,j} = \emptyset$ for $i\neq j$. Together with the complement $Q_{n,m+1} := X_n \backslash \bigcup_{i=1}^mQ_{n,i}$, we obtain a new partition%
\begin{equation*}
  \QC_n = \{ Q_{n,1},\ldots,Q_{n,m+1} \},%
\end{equation*}
and we consider the sequence $\QC_{\infty} := (\QC_n)_{n=0}^{\infty}$. Now we compare $h(f_{\infty};\QC_{\infty})$ with $h(f_{\infty};\RC^k_{\infty})$ and $h(f_{\infty};\PC_{\infty})$ with $h(f_{\infty};\QC_{\infty})$. Since $\RC^k_n$ is a refinement of $\QC_n$, by \cite[Prop.~9(iii)]{Ka1} we have%
\begin{equation*}
	h(f_{\infty};\QC_{\infty}) \leq h(f_{\infty};\RC^k_{\infty}).%
\end{equation*}
Using \cite[Prop.~9(vi)]{Ka1}, we get%
\begin{equation*}
  h(f_{\infty};\QC_{\infty}) \geq h(f_{\infty};\PC_{\infty}) - \limsup_{n\rightarrow\infty}\frac{1}{n}\sum_{i=0}^{n-1}H_{\mu_i}(\PC_i|\QC_i).%
\end{equation*}
Adding the empty set to $\PC_n$, we obtain a partition%
\begin{equation*}
  \widehat{\PC}_n = \{P_{n,1},\ldots,P_{n,m},P_{n,m+1}\},\quad P_{n,m+1}=\emptyset,%
\end{equation*}
with $H_{\mu_n}(\PC_n|\QC_n) = H_{\mu_n}(\widehat{\PC}_n|\QC_n)$. We have%
\begin{equation*}
  \mu_n(P_{n,i} \backslash Q_{n,i}) \leq \mu_n(P_{n,i}\backslash K_{n,i}) \leq \frac{\delta}{m}%
\end{equation*}
for $i=1,\ldots,m$ and%
\begin{equation*}
  \mu_n(Q_{n,i} \backslash P_{n,i}) \leq \sum_{j\neq i}\mu_n(Q_{n,i}\cap P_{n,j}) \leq \frac{m-1}{m}\delta,%
\end{equation*}
since $Q_{n,i}$ is disjoint from $K_{n,i}$. Moreover, $\mu_n(P_{n,m+1}\backslash Q_{n,m+1}) = 0$ and $\mu_n(Q_{n,m+1} \backslash P_{n,m+1}) = \mu_n(Q_{n,m+1}) \leq \delta$. Hence, the choice of $\delta$ yields $H_{\mu_n}(\QC_n|\PC_n) < \ep$, implying $h(f_{\infty};\RC^k_{\infty}) \geq h(f_{\infty};\PC_{\infty}) - \ep$. Since $\ep$ was chosen arbitrarily, this proves the inequality $h(f_{\infty},\mu_{\infty}) \leq \sup_{k\geq0} h(f_\infty,\mu_\infty;\RC_\infty^k)$. To see that the supremum over $k$ is equal to the limit for $k\rightarrow\infty$, observe that for each $\ep>0$ there is $\PC_{\infty}\in\EC_{\Mis}$ with $h(f_{\infty},\mu_{\infty};\PC_{\infty}) \geq h(f_{\infty},\mu_{\infty}) - \ep/2$ and there is $k_0$ such that $h(f_{\infty},\mu_{\infty};\RC^k_{\infty}) \geq h(f_{\infty},\mu_{\infty};\PC_{\infty}) - \ep/2$ for all $k\geq k_0$. Hence, $h(f_{\infty},\mu_{\infty}) \geq h(f_{\infty};\RC^k_{\infty}) \geq h(f_{\infty},\mu_{\infty}) - \ep$  for all $k \geq k_0$.%
\end{proof}

For many arguments in the classical entropy theory it is essential that one can use partitions of arbitrarily small diameter. To establish a reasonable entropy theory for NDS, it hence is important to understand under which conditions there exist admissible sequences with arbitrarily small diameters as required in the preceding theorem. This motivates the following definition.%

\begin{definition}
Let $(X_{\infty},f_{\infty})$ be a topological NDS with an IMS $\mu_{\infty}$. We call a sequence $(\PC^k_{\infty})_{k=0}^{\infty}$ in $\EC_{\Mis}(\mu_{\infty})$ a \emph{fine-scale sequence} if $\diam\PC^k_{\infty}\rightarrow0$. The IMS $\mu_{\infty}$ is called a \emph{fine-scale IMS} if there exists a fine-scale sequence in $\EC_{\Mis}(\mu_{\infty})$.%
\end{definition}

The next proposition summarizes elementary properties of systems with fine-scale IMS, see also Proposition \ref{prop_productext}.%

\begin{proposition}\label{prop_finescaleprops}
The following assertions hold:%
\begin{enumerate}
\item[(i)] A topological NDS $(X_{\infty},f_{\infty})$ has a fine-scale IMS iff the sequence $X_{\infty}$ is uniformly totally bounded, i.e., for each $\alpha>0$ there is $m\in\N$ such that $m$ balls of radius $\alpha$ are sufficient to cover $X_n$ for any $n\geq0$.%
\item[(ii)] Let $(X_{\infty},f_{\infty})$ and $(Y_{\infty},g_{\infty})$ be topological NDS that are equi-conjugate via $\pi_{\infty} = (\pi_n)_{n=0}^{\infty}$. Then, if $\mu_{\infty}$ is a fine-scale IMS for $(X_{\infty},f_{\infty})$, the sequence $\nu_{\infty} = (\nu_n)_{n=0}^{\infty}$ given by $\nu_n = \pi_n\mu_n$ is a fine-scale IMS for $(Y_{\infty},g_{\infty})$.%
\item[(iii)] Let $(X_{\infty},f_{\infty})$ and $(Y_{\infty},g_{\infty})$ be topological NDS with fine-scale IMS $\mu_{\infty}$ and $\nu_{\infty}$, respectively. Then $\mu_{\infty} \tm \nu_{\infty}$, defined componentwise by $\mu_n \tm \nu_n$, is a fine-scale IMS for the direct product system $(X_{\infty} \tm Y_{\infty},f_{\infty}\tm g_{\infty})$ (also defined componentwise) and%
\begin{equation}\label{eq_productineq}
  h_{\EC_{\Mis}(\mu_{\infty}\tm\nu_{\infty})}(f_{\infty}\tm g_{\infty}) \leq h_{\EC_{\Mis}(\mu_{\infty})}(f_{\infty}) + h_{\EC_{\Mis}(\nu_{\infty})}(g_{\infty}),%
\end{equation}
where on $X_n\tm Y_n$ we use the product metrics%
\begin{equation*}
  d_n^{\tm}((x_1,y_1),(x_2,y_2)) = \max\{d_n^X(x_1,x_2),d_n^Y(y_1,y_2)\}.%
\end{equation*}
\end{enumerate}
\end{proposition}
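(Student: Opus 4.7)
The plan is to handle the three assertions in sequence.

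For (i), the forward direction is direct: fix any fine-scale sequence $\PC^k_{\infty}\in\EC_{\Mis}(\mu_{\infty})$ with $\diam\PC^k_{\infty}<\alpha$. By the definition of $\EC_{\Mis}$, the cardinalities $|\PC^k_n|$ are uniformly bounded by some $m=m(k)$, and each nonempty $P\in\PC^k_n$ is contained in the ball of radius $\alpha$ about any of its points, yielding $m$ balls of radius $\alpha$ covering $X_n$ for every $n$. For the converse, I would exhibit a fine-scale IMS by taking any trajectory and forming the Dirac IMS $\mu_n:=\delta_{f_0^n(x_0)}$. Uniform total boundedness provides, for each $\alpha>0$, a cover of $X_n$ by $m(\alpha)$ balls of radius $\alpha$; disjointifying in a fixed linear order produces a Borel partition $\PC_n$ with $|\PC_n|\leq m(\alpha)$ and $\diam\PC_n\leq 2\alpha$. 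Admissibility is automatic: for any $\ep>0$, let $K_{n,i_{*}(n)}:=\{f_0^n(x_0)\}$ where $i_{*}(n)$ is the unique index with $\mu_n(P_{n,i_{*}(n)})=1$, and set $K_{n,i}:=\emptyset$ for $i\neq i_{*}(n)$. Then $\mu_n(P_{n,i}\setminus K_{n,i})=0$ for all $i$, and condition (b) holds vacuously (the minimum over an empty product being $+\infty$) with any $\delta>0$.

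For (ii), I would transport the admissible sequence through the equi-conjugacy by setting $\QC^k_n:=\{\pi_n(P):P\in\PC^k_n\}$; bijectivity of each $\pi_n$ makes this a Borel partition of $Y_n$, and $\nu_{\infty}=\pi_{\infty}\mu_{\infty}$ is an IMS via $g_n\circ\pi_n=\pi_{n+1}\circ f_n$. Equicontinuity of $\pi_{\infty}$ converts $\diam\PC^k_{\infty}\to 0$ into $\diam\QC^k_{\infty}\to 0$, and cardinalities are preserved. For admissibility at tolerance $\ep$, define $K'_{n,i}:=\pi_n(K_{n,i})$ where $K_{n,i}$ witnesses admissibility of $\PC^k_{\infty}$ at $\ep$. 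Bijectivity gives $\nu_n(\pi_n(P)\setminus\pi_n(K))=\mu_n(P\setminus K)\leq\ep$, and equicontinuity of $(\pi_n^{-1})_{n=0}^{\infty}$, applied contrapositively to the lower bound $\delta>0$ for the separation of the $K_{n,i}$ in $X_n$, supplies a uniform positive $\delta'>0$ with $D^Y_n(K'_{n,i},K'_{n,j})\geq\delta'$.

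For (iii), I take product partitions $\RC^k_n:=\{P\tm Q:P\in\PC^k_n,\,Q\in\QC^k_n\}$. In the max product metric, $\diam(P\tm Q)=\max\{\diam P,\diam Q\}$, so $\diam\RC^k_{\infty}\to 0$, and cardinalities multiply so they remain bounded. Setting $K_{n,(i,j)}:=K^P_{n,i}\tm K^Q_{n,j}$ with each factor chosen from admissibility at tolerance $\ep/2$, the measure estimate follows from
\begin{equation*}
  (\mu_n\tm\nu_n)\bigl((P\tm Q)\setminus(K^P\tm K^Q)\bigr) \leq \mu_n(P\setminus K^P)+\nu_n(Q\setminus K^Q) \leq \ep,
\end{equation*}
and max-metric separation follows because any two distinct multi-indices $(i_1,j_1)\neq(i_2,j_2)$ differ in at least one coordinate, inheriting the positive lower bound $\min\{\delta^P,\delta^Q\}$. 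This shows that $\mu_{\infty}\tm\nu_{\infty}$ is a fine-scale IMS. For the entropy inequality \eqref{eq_productineq}, the identity $H_{\mu_0\tm\nu_0}(\PC\tm\QC)=H_{\mu_0}(\PC)+H_{\nu_0}(\QC)$ for product partitions of product measures, combined with subadditivity of $\limsup$, yields
\begin{equation*}
  h(f_{\infty}\tm g_{\infty},\mu_{\infty}\tm\nu_{\infty};\RC^k_{\infty}) \leq h(f_{\infty},\mu_{\infty};\PC^k_{\infty})+h(g_{\infty},\nu_{\infty};\QC^k_{\infty}),
\end{equation*}
and taking $\sup_{k}$ together with Theorem \ref{thm_mecomp} (applicable since $(\RC^k_{\infty})$ is fine-scale for $\mu_{\infty}\tm\nu_{\infty}$) finishes the proof.

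The only delicate points I anticipate are the convention $\min\emptyset=+\infty$ invoked in the Dirac construction for (i), and the appearance of subadditivity rather than additivity in (iii), which stems from the $\limsup$ in the definition of $h(f_{\infty};\cdot)$ and is the reason why only an inequality, not the autonomous-style product formula, is claimed.
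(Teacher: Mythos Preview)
Your proposal is correct and follows essentially the same line as the paper's proof in all three parts: the Dirac IMS for the converse in (i), transport of admissible sequences via the equi-conjugacy in (ii), and product partitions plus $H_{\mu\times\nu}(\PC\times\QC)=H_\mu(\PC)+H_\nu(\QC)$ together with Theorem~\ref{thm_mecomp} in (iii). The only differences are cosmetic: in (ii) you spell out the admissibility argument that the paper delegates to \cite[Prop.~27]{Ka1}, and in (i) you use balls of radius $\alpha$ (diameter $\leq 2\alpha$) where the paper halves the radius to get $\diam<\alpha$ directly.
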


\begin{proof}
To prove (i), assume that $\mu_{\infty}$ is a fine-scale IMS for $(X_{\infty},f_{\infty})$. Then for each $\alpha>0$ we can find an admissible sequence $\PC_{\infty}$ with $\diam\PC_{\infty} < \alpha$. The fact that $\PC_{\infty}$ is admissible in particular implies $\#\PC_n \leq m$ for some $m\in\N$. Since each $P\in\PC_n$ is contained in the $\alpha$-ball around any $x\in P$, we see that $m$ $\alpha$-balls are sufficient to cover $X_n$, and hence $X_{\infty}$ is uniformly totally bounded. Conversely, assume that $X_{\infty}$ is uniformly totally bounded. Let $\mu_0 := \delta_{x_0}$ for an arbitrary $x_0 \in X_0$ and consider the IMS $\mu_n = f_0^n\mu_0 = \delta_{f_0^n(x_0)}$. For any given $\alpha>0$ choose $m$ so that $m$ balls of radius $\alpha/2$ are sufficient to cover $X_n$ for each $n$. From such a ball-cover of $X_n$ one easily constructs a measurable partition $\PC_n = \{P_{n,1},\ldots,P_{n,m}\}$ with $m$ elements such that $\diam\PC_n < \alpha$, by cutting away the overlaps between the balls. It is easily seen that such a sequence $\PC_{\infty} = (\PC_n)_{n=0}^{\infty}$ is admissible. Indeed, for any $\ep>0$ let $K_{n,i} \subset P_{n,i}$ be defined by $K_{n,i} := \{f_0^n(x_0)\}$ if $f_0^n(x_0) \in P_{n,i}$ and $K_{n,i} := \emptyset$ otherwise. Then $K_{n,i}$ is a compact subset of $P_{n,i}$ and the conditions that $\mu_n(P_{n,i}\backslash K_{n,i}) \leq \ep$ and $d_n(x,y) \geq \delta > 0$ and $x\in K_{n,i}$, $y\in K_{n,j}$ ($i\neq j$) are trivially satisfied.% 

For (ii) we note that in \cite[Prop.~27]{Ka1} it was proved that $\nu_{\infty}$ is an IMS for $(Y_{\infty},g_{\infty})$ and $\EC_{\Mis}(\mu_{\infty})$ and $\EC_{\Mis}(\nu_{\infty})$ are isomorphic in the sense that $\PC_{\infty} = (\PC_n)_{n=0}^{\infty} \in \EC_{\Mis}(\mu_{\infty})$ iff $(\pi_n\PC_n)_{n=0}^{\infty} \in \EC_{\Mis}(\nu_{\infty})$. From the equicontinuity of $\pi_{\infty}$ it easily follows that a fine-scale sequence in $\EC_{\Mis}(\mu_{\infty})$ yields a fine-scale sequence in $\EC_{\Mis}(\nu_{\infty})$ via this isomorphism.%

Finally, let us show (iii). It follows from a simple computation that the sequence $(\mu_n \tm \nu_n)_{n=0}^{\infty}$ of Borel probability measures on the spaces $X_n\tm Y_n$ is an IMS for the product system $(X_{\infty} \tm Y_{\infty},f_{\infty}\tm g_{\infty})$. By the assumption, we can choose for any given $\alpha>0$ sequences $\PC_{\infty}\in\EC_{\Mis}(\mu_{\infty})$ and $\QC_{\infty}\in\EC_{\Mis}(\nu_{\infty})$ such that $\diam\PC_{\infty},\diam\QC_{\infty} \leq \alpha$. Consider the sequence $\PC_{\infty}\tm\QC_{\infty}$ of product partitions%
\begin{equation*}
  \PC_n \tm \QC_n := \left\{ P \tm Q\ :\ P \in \PC_n,\ Q \in \QC_n \right\},\quad n \geq 0.%
\end{equation*}
For each $P \tm Q \in \PC_n \tm \QC_n$ we find $\diam(P\tm Q) \leq \alpha$ in the product metric $d_n^{\tm}$, hence $\diam(\PC_{\infty}\tm\QC_{\infty}) \leq \alpha$.  We claim that $\PC_{\infty} \tm \QC_{\infty}$ is in $\EC_{\Mis}(\mu_{\infty}\tm\nu_{\infty})$. To show this, assume $\PC_n = \{P_{n,1},\ldots,P_{n,k_n}\}$ and $\QC_n = \{Q_{n,1},\ldots,Q_{n,l_n}\}$. For a given $\ep>0$ let $\delta_1 = \delta(\PC_{\infty},\ep/2) > 0$ and $\delta_2 = \delta(\QC_{\infty},\ep/2) > 0$ as well as compact sets $K_{n,i} \subset P_{n,i}$ and $L_{n,i} \subset Q_{n,i}$ be chosen according to the definition of $\EC_{\Mis}$. Then%
\begin{eqnarray*}
 && \mu_n \tm \nu_n\left((P_{n,i}\tm Q_{n,i})\backslash (K_{n,i}\tm L_{n,i})\right)\\
&& \qquad = \mu_n \tm \nu_n\left((P_{n,i}\backslash K_{n,i}) \tm Q_{n,i} \cup P_{n,i} \tm (Q_{n,i}\backslash L_{n,i})\right)\\
&& \qquad \leq \mu_n(P_{n,i}\backslash K_{n,i})\nu_n(Q_{n,i}) + \mu_n(P_{n,i})\nu_n(Q_{n,i}\backslash L_{n,i})
\leq \frac{\ep}{2} + \frac{\ep}{2} = \ep.%
\end{eqnarray*}
Moreover, for $(x_1,y_1) \in K_{n,i_1} \tm L_{n,j_1}$ and $(x_2,y_2) \in K_{n,i_2} \tm L_{n,j_2}$ with $(i_1,j_1) \neq (i_2,j_2)$ we have%
\begin{equation*}
  d_n^{\tm}((x_1,y_1),(x_2,y_2)) = \max\left\{d^X_n(x_1,x_2),d_n^Y(y_1,y_2)\right\} \geq \min\{\delta_1,\delta_2\} =: \delta > 0.%
\end{equation*}
This proves the claim. Consequently, since $\alpha$ was chosen arbitrarily, $\mu_{\infty} \tm \nu_{\infty}$ is a fine-scale IMS. It remains to prove the entropy inequality. First note that for $\PC_{\infty}\in\EC_{\Mis}(\mu_{\infty})$ and $\QC_{\infty}\in\EC_{\Mis}(\nu_{\infty})$ we have%
\begin{align*}
  h(f_{\infty} \tm g_{\infty};\PC_{\infty} \tm \QC_{\infty}) &= \limsup_{n\rightarrow\infty}\frac{1}{n}H_{\mu_0\tm\nu_0}\Big(\bigvee_{i=0}^{n-1}(f_0^i \tm g_0^i)^{-1}(\PC_i\tm\QC_i)\Big)\\
	&= \limsup_{n\rightarrow\infty}\frac{1}{n}H_{\mu_0\tm\nu_0}\Big(\bigvee_{i=0}^{n-1}f_0^{-i}\PC_i \tm \bigvee_{i=0}^{n-1}g_0^{-i}\QC_i\Big).%
\end{align*}
For any measures $\mu,\nu$ and partitions $\PC,\QC$ the identity $H_{\mu \tm \nu}(\PC \tm \QC) = H_{\mu}(\PC) + H_{\nu}(\QC)$ holds, as can easily be seen. This gives%
\begin{equation*}
  h(f_{\infty} \tm g_{\infty};\PC_{\infty} \tm \QC_{\infty}) \leq h(f_{\infty};\PC_{\infty}) + h(g_{\infty};\QC_{\infty}).%
\end{equation*}
The inequality \eqref{eq_productineq} now follows by considering $\PC_{\infty}$ and $\QC_{\infty}$ of arbitrarily small diameter and applying Theorem \ref{thm_mecomp} to the product system.%
\end{proof}

\begin{remark}
In general, we cannot expect equality in \eqref{eq_productineq}, since the corresponding $\limsup$'s for sequences $\PC_{\infty}\tm\QC_{\infty}$, $\PC_{\infty}$ and $\QC_{\infty}$ may be attained on different subsequences. See also Hulse \cite{Hul} for counterexamples to the product formula for sequence entropy and topological entropy on non-compact spaces that could be adapted to produce a counterexample for the equality in \eqref{eq_productineq}.% 
\end{remark}

To find sufficient conditions for determining whether a given IMS is a fine-scale IMS, we first consider systems with stationary state space $X_n\equiv X$. For the stationary case, in Theorem \ref{thm_constcond} and Corollary \ref{cor_finescaleims} below we give sufficient conditions for the existence of fine-scale sequences $(\PC^k_{\infty})_{k=0}^{\infty}$ with each $\PC^k_{\infty}$ being a constant sequence, while in Proposition \ref{prop_3.16} we show an example for which such sequences do not exist.%

\begin{definition}
We say that a topological NDS $(X_{\infty},f_{\infty})$ has a \emph{stationary state space} if all $(X_n,d_n)$ are identical, i.e., $(X_n,d_n) = (X,d)$. In this case, we simply write $(X,f_{\infty})$ instead of $(X_{\infty},f_{\infty})$. Given an IMS $\mu_{\infty}$ of $(X,f_{\infty})$, we write $\WC(\mu_{\infty})$ for the set of all weak$^*$ limit points of $\mu_{\infty}$.%
\end{definition}

\begin{theorem}\label{thm_constcond}
Let $(X,f_{\infty})$ be a topological NDS with stationary state space and an IMS $\mu_{\infty}$. Then, under each of the following conditions, there exist fine-scale sequences $(\PC^k_{\infty})_{k=0}^{\infty}$ with constant $\PC^k_{\infty}$.%
\begin{enumerate}
\item[(i)] The set $\{\mu_n\}_{n=0}^{\infty}$ is relatively compact in the strong topology on the space of measures.%
\item[(ii)] For every $\alpha>0$ there is a finite measurable partition $\PC$ of $X$ with $\diam\PC<\alpha$ such that every $P\in\PC$ satisfies $\nu(\partial P)=0$ for all $\nu\in\WC(\mu_{\infty})$.%
\item[(iii)] The space $X$ is zero-dimensional.%
\item[(iv)] $X = [0,1]$ or $X = \rmS^1$ and there exists a dense set $D \subset X$ such that every $x\in D$ satisfies $\nu(\{x\}) = 0$ for all $\nu\in\WC(\mu_{\infty})$.%
\end{enumerate}
\end{theorem}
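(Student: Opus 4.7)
The plan is to reduce each of the four cases to producing a finite Borel partition $\PC = \{P_1,\ldots,P_m\}$ of $X$ with $\diam\PC < \alpha$ for which the \emph{constant} sequence $(\PC)_{n=0}^{\infty}$ lies in $\EC_{\Mis}(\mu_{\infty})$; letting $\alpha \downarrow 0$ then produces the desired fine-scale sequence. For cases (ii)--(iv) the key device is the compact \emph{$\eta$-core}
\begin{equation*}
  P_i^{\eta} := \{x \in P_i : d(x,X\backslash P_i) \geq \eta\},\qquad \eta > 0,
\end{equation*}
which satisfies $D(P_i^{\eta},P_j^{\eta}) \geq \eta$ for $i \neq j$. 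Choosing $K_{n,i} := P_i^{\eta}$ and $\delta := \eta$ verifies conditions (a)--(b) in the definition of $\EC_{\Mis}$ as soon as one knows that for every $\ep > 0$ there exists $\eta > 0$ with $\sup_{n}\mu_n(P_i \backslash P_i^{\eta}) \leq \ep$ for all $i$. Case (i) is handled separately via inner regularity.

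For (i), take any partition $\PC$ with $\diam\PC < \alpha$ built by disjointifying a finite cover of $X$ by $(\alpha/2)$-balls. Given $\ep > 0$, choose a finite $(\ep/2)$-net $\mu_{n_1},\ldots,\mu_{n_s}$ for $\{\mu_n\}$ in total variation and use inner regularity to pick compact $K_i^{(j)} \subset P_i$ with $\mu_{n_j}(P_i\backslash K_i^{(j)}) < \ep/2$ for all $i,j$. The sets $K_i := \bigcup_{j=1}^{s} K_i^{(j)} \subset P_i$ are compact and pairwise disjoint, hence $\delta := \min_{i\neq l}D(K_i,K_l) > 0$, and comparing any $\mu_n$ with the nearest element of the net in total variation yields $\mu_n(P_i\backslash K_i) < \ep$ for every $n$.

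For (ii)--(iv) I will establish the sublemma: \emph{if $\PC$ is a Borel partition with $\nu(\partial P_i) = 0$ for every $\nu \in \WC(\mu_{\infty})$ and every $i$, then $\sup_n \mu_n(P_i \backslash P_i^{\eta}) \to 0$ as $\eta \downarrow 0$.} Arguing by contradiction, extract $n_k$ and $\eta_k \downarrow 0$ with $\mu_{n_k}(P_i\backslash P_i^{\eta_k}) > \ep$ for some fixed $i$. After passing to a subsequence, either $n_k \to \infty$ and weak$^*$ compactness of probability measures on the compact space $X$ yields $\mu_{n_k}\to\nu \in \WC(\mu_{\infty})$, or $n_k$ is eventually constant equal to some $N$, in which case $\mu_N$ appears infinitely often in $(\mu_n)$ and hence lies in $\WC(\mu_{\infty})$. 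In the second case, continuity of the finite measure $\mu_N$ on the decreasing sets $P_i\backslash P_i^{\eta_k}$ (whose intersection is contained in $\partial P_i$) already yields $\mu_N(\partial P_i) \geq \ep$, contradicting the hypothesis. In the first case, the closed sets $F_{\eta} := \{x\in\overline{P_i} : d(x,\overline{X\backslash P_i}) \leq \eta\}$ contain $P_i\backslash P_i^{\eta}$, decrease as $\eta \downarrow 0$ with intersection $\partial P_i$, and satisfy $F_{\eta_k} \subset F_{\eta}$ once $\eta_k \leq \eta$; portmanteau then gives $\nu(F_{\eta}) \geq \limsup_k \mu_{n_k}(F_{\eta}) \geq \ep$ for every $\eta > 0$, and letting $\eta \downarrow 0$ yields the contradiction $\nu(\partial P_i) \geq \ep$.

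With the sublemma in hand, (ii) is immediate from the hypothesis. For (iii), a compact zero-dimensional metric space admits a basis of clopen sets, so a finite clopen cover of $X$ by sets of diameter $<\alpha$ can be disjointified to a partition into clopen cells, each with empty boundary, and the sublemma applies. For (iv), density of $D$ supplies a subdivision $t_0 < t_1 < \cdots < t_m$ (cyclic on $\rmS^1$, with $t_0 = 0$, $t_m = 1$ on $[0,1]$) with $t_i - t_{i-1} < \alpha$ and all non-endpoint $t_i$ lying in $D$; the resulting half-open interval or arc partition has boundaries contained in $D$, so the sublemma applies once more. The main subtlety is the choice of closed enlargement $F_{\eta}$ of the non-closed set $P_i\backslash P_i^{\eta}$: naive candidates either fail to be closed or telescope down to $\overline{X\backslash P_i}$ rather than to $\partial P_i$, leaving no room for a contradiction with the hypothesis; intersecting with $\overline{P_i}$ is what makes the portmanteau step deliver the required contradiction.
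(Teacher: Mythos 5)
Your treatments of (i) and (iii) are sound: in (i) you give a direct total-variation-net argument for what the paper simply cites from \cite[Prop.~31]{Ka1}, and in (iii) the clopen cells have empty boundary and positive mutual distance, so your core construction degenerates harmlessly. The genuine gap is in the sublemma driving (ii) and (iv): it is false as stated. The hypothesis controls only the weak$^*$ \emph{limit points} of $(\mu_n)$, and a single $\mu_N$ that is not a limit point may charge $P_i\cap\overline{X\backslash P_i}$. Concretely, take $X=[0,1]$, $f_n(x)=x/2$, $\mu_0=\delta_{1/2}$, so that $\mu_n=\delta_{2^{-n-1}}$ and $\WC(\mu_{\infty})=\{\delta_0\}$; for $P_1=[0,1/2]$, $P_2=(1/2,1]$ one has $\nu(\partial P_i)=0$ for the unique $\nu\in\WC(\mu_{\infty})$, yet $\mu_0(P_1\backslash P_1^{\eta})=1$ for every $\eta>0$, so $\sup_n\mu_n(P_1\backslash P_1^{\eta})$ does not tend to $0$. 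The precise point where your proof breaks is the claim that if the auxiliary index sequence $n_k$ is eventually constant equal to $N$ then ``$\mu_N$ appears infinitely often in $(\mu_n)$ and hence lies in $\WC(\mu_{\infty})$'': it is the index $N$ that recurs in your chosen sequence $(n_k)$, not the measure $\mu_N$ in the original sequence $(\mu_n)$, so the conclusion $\mu_N(\partial P_i)\geq\ep$ contradicts nothing.

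The gap is repairable, and the repair is exactly the extra device the paper uses in its proof of (ii): the definition of $\EC_{\Mis}$ allows the compact sets $K_{n,i}$ to depend on $n$ (only $\delta$ must be uniform in $n$). Your portmanteau argument, applied to indices tending to infinity, does show that for each $\ep>0$ there is $\eta>0$ such that $\mu_n(P_i\backslash P_i^{\eta})\leq\ep$ for all but \emph{finitely many} $n$ (if infinitely many indices failed for every $\eta$, a weak$^*$ limit taken along such indices would lie in $\WC(\mu_{\infty})$ and charge $\partial P_i$, by your $F_{\eta}$ argument). For the finitely many exceptional $n$ you must abandon the $\eta$-core and instead choose, by inner regularity of each individual $\mu_n$, compact sets $K_{n,i}\subset P_i$ with $\mu_n(P_i\backslash K_{n,i})\leq\ep$; these are pairwise disjoint compacta, hence positively separated, and the final $\delta$ is the minimum of $\eta$ and the finitely many resulting separation constants. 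With this amendment your argument for (ii) and (iv) matches the paper's in substance: the paper organizes the compactness differently (a finite weak$^*$ cover of $\WC(\mu_{\infty})$ in (ii), resp.\ a uniform bound on $\mu_n(B_{\delta}(x))$ for $x\in D$ in (iv)), but the ingredient you are missing is precisely its separate treatment of the finitely many exceptional times $n<n_0$.
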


\begin{proof}
Under condition (i), it was proved in \cite[Prop.~31]{Ka1} that $\EC_{\Mis}$ contains all constant sequences.%

Assume (ii). For a given $\alpha>0$ let $\PC = \{P_1,\ldots,P_k\}$ be a partition with $\diam\PC<\alpha$ and $\nu(\partial\PC) = 0$ for all $\nu \in \WC(\mu_{\infty})$. We claim that the constant sequence $\PC_n \equiv \PC$ is admissible. To prove this, let $\ep>0$. For every $\nu \in \WC(\mu_{\infty})$ and every $i\in\{1,\ldots,k\}$ we find a compact set $K_i^{\nu} \subset P_i$ such that $\nu(P_i \backslash K_i^{\nu}) \leq \ep/2$ and $\nu(\partial(P_i \backslash K_i^{\nu})) = 0$ (see \cite[Lem.~33]{Ka1} for the existence of such $K_i^{\nu}$). By the Portmanteau theorem, there exists a weak$^*$-neighborhood $U_{\nu}$ of $\nu$ such that every $\mu \in U_{\nu}$ satisfies $|\mu(P_i\backslash K_i^{\nu}) - \nu(P_i\backslash K_i^{\nu})| \leq \ep/2$ for $i=1,\ldots,k$. By compactness of $\WC(\mu_{\infty})$, finitely many such neighborhoods, say $U_1 = U_{\nu_1},\ldots,U_r = U_{\nu_r}$, are sufficient to cover $\WC(\mu_{\infty})$. By standard arguments, it follows that there exists $n_0\geq1$ such that $\mu_n \in \bigcup_{j=1}^r U_j$ for all $n\geq n_0$. For all $n \in \{0,\ldots,n_0-1\}$ we can find compact sets $K_i^n \subset P_i$ with $\mu_n(P_i \backslash K_i^n) \leq \ep$ for $i=1,\ldots,k$. Let%
\begin{equation*}
  K_i := \bigcup_{j=1}^r K_i^{\nu_j} \cup \bigcup_{n=0}^{n_0-1}K_i^n,\quad i=1,\ldots,k.%
\end{equation*}
Then $K_i$ is a compact subset of $P_i$. For every $n\geq n_0$ we find $j_n \in \{1,\ldots,r\}$ such that $\mu_n \in U_{\nu_{j_n}}$, implying%
\begin{equation*}
  \mu_n(P_i \backslash K_i) \leq \mu_n(P_i \backslash K_i^{\nu_{j_n}}) \leq \nu_{j_n}(P_i \backslash K_i^{\nu_{j_n}}) + \frac{\ep}{2} \leq \frac{\ep}{2} + \frac{\ep}{2} = \ep.%  
\end{equation*}
For $n < n_0$ we have $\mu_n(P_i \backslash K_i) \leq \mu_n(P_i \backslash K_i^n) \leq \ep$, proving the claim.%

Now assume that condition (iii) holds. Then the topology of $X$ has a base consisting of clopen sets (in fact, this is one possible definition of a zero-dimensional space). Hence, we can find for each $\alpha>0$ a partition $\PC = \{P_1,\ldots,P_k\}$ of $X$ with compact sets $P_i$ satisfying $\diam P_i < \alpha$ for $i=1,\ldots,k$. Let $\delta := \min_{i\neq j}D(P_i,P_j)$. By compactness we have $\delta>0$, which obviously implies that $\PC_n \equiv \PC$ is contained in $\EC_{\Mis}$.%

Finally, suppose that (iv) holds. We only give the proof for $X= [0,1]$, since for $\rmS^1$ it is very similar. We first prove that for every $x\in D$ the following holds:%
\begin{equation*}
  \forall \ep>0\ \exists \delta>0:\ \forall n\geq 0,\ \mu_n(B_{\delta}(x)) < \ep.%
\end{equation*}
To show this, we argue by contradiction, i.e., we assume that there exists $\ep>0$ such that for every $\delta>0$ there is $n = n(\delta)$ with $\mu_n(B_{\delta}(x)) \geq \ep$. Let $(\delta_k)_{k=0}^{\infty}$ converge to zero and let $n_k = n(\delta_k)$, i.e.,%
\begin{equation*}
  \mu_{n_k}(B_{\delta_k}(x)) \geq \ep \mbox{\quad for all\ } k \geq 0.%
\end{equation*}
We may assume that $\mu_{n_k}$ weakly$^*$-converges to some $\mu$. Then we may replace the numbers $\delta_k$ by slightly bigger numbers so that the convergence to zero still holds, but now $\mu(\partial B_{\delta_k}(x)) = 0$. Fix any $k_* \geq 0$. Using the Portmanteau theorem and the fact that $B_{\delta_k}(x)\subset B_{\delta_{k_*}}(x)$ for sufficiently large $k$, we find%
\begin{equation*}
  \mu(B_{\delta_{k_*}}(x)) = \lim_{k\rightarrow\infty}\mu_{n_k}(B_{\delta_{k_*}}(x)) \geq \ep.%
\end{equation*}
Letting $k_* \rightarrow \infty$, we thus obtain the contradiction $\mu(\{x\}) \geq \ep$. Now, for a given $\alpha>0$, we partition the interval $[0,1]$ into subintervals $I_1,\ldots,I_k$ of uniform length $1/k$, where $k > 2/\alpha$ and $\sup I_j = \inf I_{j+1}$. By the claim, we can pick $x_j \in I_j$ for $j = 1,\ldots,k$ such that%
\begin{equation}\label{eq_epdelta}
  \forall \ep>0\ \exists \delta>0:\ \forall n\geq 0,\ j=1,\ldots,k,\ \mu_n(B_{\delta}(x_j)) < \ep.%
\end{equation}
Let the partition $\PC$ consist of the subintervals $P_1 := [0,x_1)$, $P_2 := [x_1,x_2)$, $\ldots$, $P_{k+1} := [x_k,1]$. Then $\diam\PC \leq 2/k < \alpha$. To show that the constant sequence $\PC_n \equiv \PC$ is admissible, let $\ep>0$ and pick $\delta = \delta(\ep/2)$ according to \eqref{eq_epdelta}. Define compact sets%
\begin{align*}
  K_j &:= \left\{ x\in P_j\ :\ \mathrm{dist}(x,\partial P_j) \geq \delta/2 \right\},\quad j=2,\ldots,k,\\
  K_1 &:= \left\{ x\in P_1\ :\ |x-x_1| \leq \delta/2 \right\},\ K_{k+1} := \left\{ x\in P_{k+1}\ :\ |x-x_k| \leq \delta/2 \right\}.%
\end{align*}
This implies that any two $K_j$ have distance $\geq\delta$ and $\mu_n(P_j \backslash K_j)\leq\ep$.%
\end{proof}

\begin{remark}
We note that condition (ii) in the preceding theorem is reminiscent of the \emph{small boundary property} in the entropy structure theory of autonomous dynamical systems (cf.~Downarowicz \cite{Dow,Do2}). Here partitions of a space $X$ are of interest with boundaries of measure zero w.r.t.~all invariant measures of a given map $T$ on $X$.% 
\end{remark}

From Theorem \ref{thm_mecomp} combined with Theorem \ref{thm_constcond}(i) or (ii) we immediately obtain%

\begin{corollary}\label{cor_autonomous}
For an autonomous measure-preserving dynamical system, the metric entropy defined as in \eqref{eq_defme} is the same as the usual Kolmogorov-Sinai metric entropy.%
\end{corollary}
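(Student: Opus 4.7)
The plan is to specialize Theorems \ref{thm_mecomp} and \ref{thm_constcond}(i) to the autonomous setting. Write $X_n \equiv X$, $f_n \equiv f$, and $\mu_n \equiv \mu$, where $\mu$ is an $f$-invariant Borel probability measure. The set $\{\mu_n\}_{n=0}^{\infty}=\{\mu\}$ is a singleton, hence trivially compact in the strong topology on measures, so condition (i) of Theorem \ref{thm_constcond} is satisfied. As recorded in the proof of that theorem, this implies that $\EC_{\Mis}(\mu_{\infty})$ contains \emph{every} constant sequence $\PC_n\equiv\PC$ coming from a finite Borel partition $\PC$ of $X$.

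For any such constant sequence, unwinding the definition gives
\begin{equation*}
  h(f_{\infty},\mu_{\infty};\PC_{\infty}) = \limsup_{n\to\infty}\frac{1}{n} H_{\mu}\Big(\bigvee_{i=0}^{n-1} f^{-i}\PC\Big),
\end{equation*}
and the $f$-invariance of $\mu$ gives the standard subadditivity of $a_n := H_{\mu}(\bigvee_{i=0}^{n-1}f^{-i}\PC)$. Hence the $\limsup$ is a true limit and equals the classical Kolmogorov--Sinai entropy $h_{\mu}(f,\PC)$.

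The lower bound $h(f_{\infty},\mu_{\infty})\geq h_{\mu}(f)$ is now immediate: the supremum in \eqref{eq_defme} dominates $h(f_{\infty},\mu_{\infty};\PC_{\infty}) = h_{\mu}(f,\PC)$ for every finite Borel partition $\PC$, so taking the sup over $\PC$ yields $h_{\mu}(f)$. For the upper bound I would invoke Theorem \ref{thm_mecomp}. Since $X$ is a compact metric space, for each $k\geq 1$ one can build a finite Borel partition $\PC^k$ of $X$ with $\diam\PC^k<1/k$, and by the previous step the constant sequence $\PC^k_{\infty}$ lies in $\EC_{\Mis}(\mu_{\infty})$. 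Thus $(\PC^k_{\infty})_{k}$ is a fine-scale sequence and Theorem \ref{thm_mecomp} yields
\begin{equation*}
  h(f_{\infty},\mu_{\infty}) = \sup_{k\geq 0} h(f_{\infty},\mu_{\infty};\PC^k_{\infty}) = \sup_{k\geq 0} h_{\mu}(f,\PC^k) \leq h_{\mu}(f),
\end{equation*}
completing the equality. The argument is essentially a direct specialization of the two main results of the section; the only mild point worth flagging is the recognition that, under $f$-invariance of $\mu$, the $\limsup$ in the definition of $h(f_{\infty},\mu_{\infty};\PC_{\infty})$ collapses to a genuine limit for constant $\PC_{\infty}$, so that the nonautonomous quantity truly matches $h_{\mu}(f,\PC)$ on the nose.
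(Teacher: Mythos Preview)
Your argument is correct and follows exactly the route indicated in the paper, namely specializing Theorem~\ref{thm_constcond}(i) (to get that all constant partition sequences are admissible) and then applying Theorem~\ref{thm_mecomp} to a fine-scale sequence of constant partitions. You have simply made explicit the details that the paper leaves implicit, including the observation that for constant $\PC_{\infty}$ the $\limsup$ is an honest limit by subadditivity.
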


In the next corollary we formulate conditions that are easier to check than the conditions in Theorem \ref{thm_constcond}.%

\begin{corollary}\label{cor_finescaleims}
Let $(X,f_{\infty})$ be a topological NDS with stationary state space and an IMS $\mu_{\infty}$. Then, under each of the following conditions, there exist fine-scale sequences $(\PC^k_{\infty})_{k=0}^{\infty}$ with constant $\PC^k_{\infty}$.%
\begin{enumerate}
\item[(i)] $\WC(\mu_{\infty})$ contains at most countably many non-equivalent measures.%
\item[(ii)] $X = [0,1]$ or $X = \rmS^1$ and at most countably many $\mu\in\WC(\mu_{\infty})$ admit one-point sets of positive measure.%
\item[(iii)] The space $X$ is of the form $X = Y \tm I$ with a compact metric space $Y$ and $\mu_n = \nu_n \tm \lambda$, where $\lambda$ is the standard Lebesgue measure on $I=[0,1]$.%
\end{enumerate}
\end{corollary}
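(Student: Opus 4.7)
The plan is to verify, in each of (i)--(iii), one of the hypotheses of Theorem \ref{thm_constcond}. Cases (i) and (ii) will feed into conditions (ii) and (iv) of that theorem respectively, while (iii) will also use (ii) but via a non-product partition of $Y \tm I$.

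For (i), I would pick representatives $\nu_1,\nu_2,\ldots$ of the at most countably many equivalence classes in $\WC(\mu_{\infty})$ and set $\bar\nu := \sum_{k\geq 1}2^{-k}\nu_k$; every $\nu \in \WC(\mu_{\infty})$ is mutually absolutely continuous with some $\nu_k$ and hence with $\bar\nu$ modulo null sets, so $\bar\nu$-null Borel sets are automatically $\nu$-null. For a single probability measure on a compact metric space, finite partitions of arbitrarily small diameter with topologically $\bar\nu$-null cell-boundaries are standard: cover $X$ by balls $B_j := B_{r_j}(x_j)$ with $r_j < \alpha/2$ chosen outside the at most countable bad set $\{r : \bar\nu(\partial B_r(x_j)) > 0\}$, then take the atoms $P_j := B_j \setminus \bigcup_{i<j} B_i$. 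This gives $\bar\nu(\partial P_j) = 0$, hence $\nu(\partial P_j) = 0$ for every $\nu \in \WC(\mu_{\infty})$, and Theorem \ref{thm_constcond}(ii) applies. For (ii), the union over the at most countably many atomic $\mu \in \WC(\mu_{\infty})$ of their respective (countable) atom sets is countable; its complement $D$ is dense in $X = [0,1]$ or $\rmS^1$, and $\mu(\{x\}) = 0$ holds for every $x \in D$ and every $\mu \in \WC(\mu_{\infty})$ (either $\mu$ is non-atomic, or $x$ avoids the atoms of $\mu$), so Theorem \ref{thm_constcond}(iv) applies.

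Case (iii) is the subtle one. Since $\mu_n = \nu_n \tm \lambda$, every weak$^*$-limit of $(\mu_n)$ has the form $\nu \tm \lambda$ for some Borel probability $\nu$ on $Y$, so by Theorem \ref{thm_constcond}(ii) it suffices to produce, for each $\alpha > 0$, a finite partition of $Y \tm I$ of diameter less than $\alpha$ whose cell-boundaries are $(\nu \tm \lambda)$-null for every Borel probability $\nu$ on $Y$. The na\"ive product partition $\QC \tm \RC$ fails because its vertical boundary $\partial Q \tm \overline{I_j}$ has measure $\nu(\partial Q)\,\lambda(I_j)$, which can be positive whenever $\nu$ charges $\partial Q$. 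The idea is to tilt the vertical walls in the $I$-direction so that they become graphs of measurable functions $Y \to I$, which are $(\nu \tm \lambda)$-null for every $\nu$ by Fubini. Concretely, I would fix an $\alpha/4$-net $\{y_1,\ldots,y_l\} \subset Y$, pairwise distinct $\epsilon_1,\ldots,\epsilon_l \in (0,\alpha/8)$, and a uniform partition $I = I_1 \cup \cdots \cup I_k$ with $k > 2/\alpha$; letting $i(y,t)$ denote the smallest index $i$ minimizing $d_Y(y,y_i) + \epsilon_i t$, set
\begin{equation*}
  P_{ij} := \{(y,t) \in Y \tm I : i(y,t) = i,\ t \in I_j\}.
\end{equation*}
A short estimate using the minimality and the $\alpha/4$-net property gives $\diam P_{ij} \leq 3\alpha/4$ in the max product metric; the $Y$-transverse boundary between cells $i \neq i'$ lies in the graph $\{(y,t) : d_Y(y,y_i) - d_Y(y,y_{i'}) = (\epsilon_{i'} - \epsilon_i)t\}$, which (since the $\epsilon$'s are distinct) is a genuine graph $y \mapsto t$ and so $(\nu \tm \lambda)$-null by Fubini, while the $I$-transverse boundaries $Y \tm \partial I_j$ are $\lambda$-null. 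The main obstacle is precisely this step: without any regularity on $(\nu_n)$, no partition of $Y$ alone can have $\nu$-null boundary for every weak$^*$-limit point $\nu$ of $(\nu_n)$, so the tilt is essential to transfer the problem into the $I$-direction, where Lebesgue measure absorbs any atomic contribution.
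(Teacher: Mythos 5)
Your proposal is correct, and its overall architecture coincides with the paper's: all three cases are reduced to Theorem \ref{thm_constcond}, with (i) and (iii) feeding into condition (ii) of that theorem and (ii) into condition (iv); your argument for case (ii) is essentially verbatim the paper's. Where you diverge is in how the small-boundary partitions are produced. For (i) the paper simply cites \cite[Fact 6.6.6]{Do2} for the existence of arbitrarily fine partitions with null boundaries w.r.t.\ a countable family of measures; your dominating measure $\bar\nu=\sum_k 2^{-k}\nu_k$ together with the choice of ball radii avoiding the at most countably many radii $r$ with $\bar\nu(\partial B_r(x_j))>0$ is precisely the standard proof of that fact, so you gain self-containedness at no cost. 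For (iii) the constructions genuinely differ: the paper partitions $Y\tm I$ by graphs of \emph{continuous} functions $\varphi:Y\rightarrow I$, shows $(\mu\tm\lambda)(\Graph(\varphi))=0$ by covering the graph with $\bigcup_j\varphi^{-1}(I_j)\tm I_j$, and appeals to \cite[Sec.~6.2]{Dow} for a choice of $\varphi_1,\ldots,\varphi_m$ making the joined partitions arbitrarily fine; you instead build an explicit tilted Voronoi partition whose interfaces are tie sets $\{(y,t): d_Y(y,y_i)-d_Y(y,y_{i'})=(\ep_{i'}-\ep_i)t\}$, which for distinct $\ep_i$ have one-point $t$-sections and are therefore $(\nu\tm\lambda)$-null by Fubini. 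Your version is more elementary and removes the external reference, at the price of the (routine, and correctly handled) diameter estimate via the $\alpha/4$-net; both arguments rely equally on the observation, implicit in the paper's phrase ``in particular for all elements of $\WC(\mu_{\infty}\tm\lambda)$'' and explicit in your write-up, that every weak$^*$ limit point of $(\nu_n\tm\lambda)$ is again of the form $\nu\tm\lambda$.
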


\begin{proof}
Under condition (i) it can be concluded from Theorem \ref{thm_constcond}(ii) that the assertion holds. Indeed, by \cite[Fact 6.6.6]{Do2}, for any countable family of probability measures of a compact metric space, there exist arbitrarily fine partitions with boundaries of measure zero w.r.t.~all measures in this family.%

Under condition (ii) the proof follows from Theorem \ref{thm_constcond}(iv): Let $\{\nu_{\alpha}\}_{\alpha\in A}$ be the set of all elements of $\WC(\mu_{\infty})$ that admit one-point sets of positive measure and let $A_{\alpha} := \{ x\in X : \nu_{\alpha}(\{x\}) > 0 \}$. Then $A_{\alpha}$ is countable, since otherwise one of the sets $\{x \in X : \nu_{\alpha}(\{x\}) > 1/k\}$, $k\in\N$, would be infinite in contradiction to the finiteness of $\nu_{\alpha}$. As a consequence, also $A := \bigcup_{\alpha\in A}A_{\alpha}$ is countable, and hence $D := X \backslash A$ is dense in $X$. For every $x\in D$ we have $\nu(\{x\}) = 0$ for each $\nu\in\WC(\mu_{\infty})$. This shows that condition (iv) in Theorem \ref{thm_constcond} is satisfied.%

Under condition (iii) the proof follows from Theorem \ref{thm_constcond}(ii). Indeed, we can construct partitions of $X$ with arbitrarily small diameters and zero boundaries with respect to \emph{all} probability measures of the form $\mu \tm \lambda$ on $Y \tm I$, so in particular for all elements of $\WC(\mu_{\infty}\tm\lambda)$, as follows. Take a continuous function $\varphi:Y\rightarrow I$ and define%
\begin{eqnarray*}
  P^a(\varphi) &:=& \left\{ (y,t) \in Y \tm I\ :\ t \geq \varphi(y) \right\},\\
	P^b(\varphi) &:=& \left\{ (y,t) \in Y \tm I\ :\ t < \varphi(y) \right\}.%
\end{eqnarray*}
This yields the measurable partition $\PC(\varphi) = \{P^a(\varphi),P^b(\varphi)\}$. For finitely many continuous functions $\varphi_1,\ldots,\varphi_m:Y\rightarrow I$ we also define the partition $\PC(\varphi_1,\ldots,\varphi_m) := \bigvee_{j=1}^m \PC(\varphi_j)$. We observe that $\partial P^a(\varphi) \cup \partial P^b(\varphi) \subset \Graph(\varphi)$. For any partition of $I$ into subintervals $I_1,I_2,\ldots,I_r$ of length $1/r$,%
\begin{equation*}
  \Graph(\varphi) \subset \bigcup_{j=1}^r \varphi^{-1}(I_j) \tm I_j,%
\end{equation*}
implying%
\begin{equation*}
  (\mu \tm \lambda)(\Graph(\varphi)) \leq \sum_{j=1}^r \mu(\varphi^{-1}(I_j))\lambda(I_j) = \frac{1}{r}.%
\end{equation*}
Hence, $(\mu \tm \lambda)(\partial P^a(\varphi)) = (\mu \tm \lambda)(\partial P^b(\varphi)) = 0$. Since the elements of $\PC(\varphi_1,\ldots,\varphi_m)$ are finite intersections of sets with zero boundaries w.r.t.~all $\mu\tm\lambda$, their boundaries also have this property. According to \cite[Sec.~6.2]{Dow} we can choose $(\varphi_j)_{j=1}^{\infty}$ so that $\diam\PC(\varphi_1,\ldots,\varphi_m) \rightarrow 0$ for $m\rightarrow\infty$.%
\end{proof}

\begin{remark}
Here is a possible application of condition (iii) above: Consider the identity map $\id:I \rightarrow I$ and note that $h_{\tp}(f_{\infty} \tm \id) = h_{\tp}(f_{\infty})$, which is easy to prove. However, in general we don't know if $h_{\EC_{\Mis}(\mu_{\infty}\tm\lambda)}(f_{\infty} \tm \id) = h_{\EC_{\Mis}(\mu_{\infty})}(f_{\infty})$. Proposition \ref{prop_productext} given below yields this equality only under the assumption that $\mu_{\infty}$ is a fine-scale IMS. Now condition (iii) shows that $\mu_{\infty} \tm \lambda$ is always a fine-scale IMS for $f_{\infty} \tm \id$. Hence, it might be easier to compute lower bounds for $h_{\tp}(f_{\infty})$ in terms of the metric entropy of the product system $f_{\infty} \tm \id$ than by using $f_{\infty}$. Condition (iii) together with Proposition \ref{prop_productext} also shows that for a fine-scale IMS one can always use constant sequences of partitions to compute the metric entropy by passing over to the product $(f_{\infty}\tm\id,\mu_{\infty}\tm\lambda)$.%
\end{remark}

With regard to condition (i) in the preceding corollary, we note that the number of non-equivalent elements of $\WC(\mu_{\infty})$ is \emph{not} an invariant with respect to equi-conjugacies, see Example \ref{exm_3.14} below. This is very interesting, because according to Proposition \ref{prop_finescaleprops}(ii), equi-conjugacies preserve the property of being a fine-scale IMS. Hence, we can formulate the following corollary.%

\begin{corollary}\label{cor_countablelimitset}
Let $(X_{\infty},f_{\infty})$ be a topological NDS with an IMS $\mu_{\infty}$. Let $Y$ be a compact metric space and $\pi_n:X_n \rightarrow Y$ homeomorphisms such that $(\pi_n)_{n=0}^{\infty}$ and $(\pi_n^{-1})_{n=0}^{\infty}$ are equicontinuous. If there are at most countably many non-equivalent measures in $\WC((\pi_n\mu_n)_{n=0}^{\infty})$, then $\mu_{\infty}$ is a fine-scale IMS.%
\end{corollary}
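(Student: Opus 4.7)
The plan is to transport the hypothesis to a system with stationary state space, apply Corollary \ref{cor_finescaleims}(i) there, and then transport the conclusion back via Proposition \ref{prop_finescaleprops}(ii).

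First I would manufacture an equi-conjugate system with stationary state space. Define $g_n : Y \rightarrow Y$ by $g_n := \pi_{n+1} \circ f_n \circ \pi_n^{-1}$. The sequence $g_{\infty} = (g_n)_{n=0}^{\infty}$ is equicontinuous: for $\ep>0$, first pick $\delta_1>0$ so that $d_Y(u,v)<\delta_1$ implies $d_{n}(\pi_n^{-1}(u),\pi_n^{-1}(v))<\delta_2$ uniformly in $n$ (using equicontinuity of $(\pi_n^{-1})$), then choose $\delta_2>0$ so that the equicontinuous family $(f_n)$ maps $\delta_2$-close pairs to $\delta_3$-close pairs, and finally use equicontinuity of $(\pi_n)$ to conclude that $g_n$-images are $\ep$-close. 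Thus $(Y,g_{\infty})$ is a topological NDS with stationary state space. By construction $\pi_{\infty}$ is an equi-conjugacy between $(X_{\infty},f_{\infty})$ and $(Y,g_{\infty})$, and $\nu_n := \pi_n\mu_n$ is an IMS for $(Y,g_{\infty})$ (push-forwards commute with the semiconjugacy relation).

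Next I would invoke Corollary \ref{cor_finescaleims}(i): since by hypothesis $\WC(\nu_{\infty}) = \WC((\pi_n\mu_n)_{n=0}^{\infty})$ contains at most countably many non-equivalent measures, there exists a fine-scale sequence in $\EC_{\Mis}(\nu_{\infty})$, so $\nu_{\infty}$ is a fine-scale IMS for $(Y,g_{\infty})$.

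Finally I would pull this back. The assumed equicontinuity of both $(\pi_n)$ and $(\pi_n^{-1})$ makes the equi-conjugacy symmetric, so $\pi_{\infty}^{-1} = (\pi_n^{-1})_{n=0}^{\infty}$ is an equi-conjugacy from $(Y,g_{\infty})$ to $(X_{\infty},f_{\infty})$. Applying Proposition \ref{prop_finescaleprops}(ii) to this inverse equi-conjugacy, the push-forward sequence $\pi_n^{-1}\nu_n = \mu_n$ is a fine-scale IMS for $(X_{\infty},f_{\infty})$, as required.

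The only step demanding care is the verification that $g_{\infty}$ is equicontinuous and that the notion of equi-conjugacy really is symmetric under our hypotheses; both are routine three-$\delta$ chases using the uniform equicontinuity of $(\pi_n)$, $(f_n)$, and $(\pi_n^{-1})$. Once this is in place, the result is an immediate concatenation of Corollary \ref{cor_finescaleims}(i) with Proposition \ref{prop_finescaleprops}(ii).
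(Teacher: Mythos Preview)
Your proposal is correct and follows exactly the approach the paper intends: the corollary is stated there without a separate proof, as an immediate consequence of Proposition \ref{prop_finescaleprops}(ii) combined with Corollary \ref{cor_finescaleims}(i), and you have simply spelled out the routine verifications (that $g_\infty$ is equicontinuous and that the equi-conjugacy is symmetric) which the paper leaves implicit. One cosmetic point: in your three-$\delta$ chase the order of quantifiers should be reversed---given $\ep$, first choose $\delta_3$ via equicontinuity of $(\pi_n)$, then $\delta_2$ via $(f_n)$, then $\delta_1$ via $(\pi_n^{-1})$---but the intended argument is clear and sound.
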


\begin{example}\label{exm_3.14}
Let $\rmS^1 \subset \C$ be the unit circle and consider a map $f:\rmS^1 \rightarrow \rmS^1$ with a dense forward orbit $\{f^n(x_0)\}_{n=0}^{\infty}$ (e.g., an irrational rotation or the angle doubling map). Let $f_n := f$ for every $n\geq0$ and put $\mu_0 := \delta_{x_0}$, $\mu_n := f^n\mu_0 = \delta_{f^n(x_0)}$. This IMS has every Dirac measure $\delta_x$, $x\in\rmS^1$, as a limit point. Choose for each $n$ a rotation $\pi_n:\rmS^1 \rightarrow \rmS^1$ such that $\pi_n(f^n(x_0)) = 1$. Then $\{\pi_n\}$ and $\{\pi_n^{-1}\}$ are equicontinuous and $\pi_n\mu_n$ equals $\delta_1$ constantly.%
\end{example}

We next present an example of an NDS on the unit interval, which does not admit constant admissible sequences of partitions into subintervals of arbitrarily small diameters. We will use the following lemma.%

\begin{lemma}\label{lem_hl}
Let $n\in\N$ and $J \subset (0,1)$ be a compact interval of length $2^{-n}$. Then there are $k=k(n)\in\N$ and continuous piecewise affine maps $f_1,\ldots,f_k:[0,1]\rightarrow[0,1]$ such that%
\begin{enumerate}
\item[(i)] For every $i\in\{1,\ldots,k\}$, the restriction
 $ f_i|_{f_1^{i-1}(J)}:f_1^{i-1}(J) \rightarrow f_1^i(J)$ is an isometry.%
\item[(ii)] For every $x$ with $2^{-(n+1)} < x < 1 - 2^{-(n+1)}$ there is $i \in \{1,\ldots,k\}$  with
$x \in f_1^i(J)$ and $\dist(x,\partial f_1^i(J)) > 2^{-(n+2)}$.%
\item[(iii)] $|f_i'(x)| \leq 1$ for $i=1,\ldots,k$ and all $x$ such that $f_i'(x)$ exists.%
\end{enumerate}
\end{lemma}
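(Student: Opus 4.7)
The plan is to take each $f_i$ as a clipped translation $f_i(x) := \max\{0, \min\{1, x + c_i\}\}$ on $[0,1]$ for a suitable shift $c_i \in [-h, h]$, where $h := 2^{-(n+2)}$. Each such map is continuous and piecewise affine with slopes in $\{0, 1\}$, giving (iii) for free; moreover, it acts as the pure translation $x \mapsto x + c_i$ on the ``linear region'' $[\max\{0, -c_i\}, \min\{1, 1-c_i\}]$, where the clipping is inactive. So whenever the interval on which $f_i$ is applied lies inside this linear region, $f_i$ restricts to an isometry there.

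Writing $J = [\beta, \beta + 2^{-n}]$, I construct a finite sequence of positions $\beta_0 = \beta, \beta_1, \ldots, \beta_k \in [0, 1 - 2^{-n}]$ with $|\beta_i - \beta_{i-1}| \leq h$, chosen to first decrease in steps of $-h$ down to $0$ and then increase in steps of $+h$ up to $1 - 2^{-n}$ (the initial and terminal step of each phase may be shorter than $h$ so as to hit the endpoints exactly). Setting $c_i := \beta_i - \beta_{i-1}$, induction on $i$ gives $f_1^i(J) = [\beta_i, \beta_i + 2^{-n}] =: J_i$ and shows that $f_i|_{J_{i-1}}$ is the translation by $c_i$; this works precisely because $\beta_i \in [0, 1 - 2^{-n}]$ forces $J_{i-1}$ to sit inside the linear region of $f_i$. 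Thus $f_i|_{J_{i-1}} \colon J_{i-1} \to J_i$ is an isometry, giving (i).

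For (ii), fix $x \in (2^{-(n+1)}, 1 - 2^{-(n+1)})$. The requirements ``$x \in J_i$'' and ``$\dist(x, \partial J_i) > 2^{-(n+2)}$'' are together equivalent to $\beta_i \in T(x) := (x - 2^{-n} + 2^{-(n+2)}, x - 2^{-(n+2)})$, an open interval of length $2^{-(n+1)} = 2h$. A short case analysis (depending on whether $T(x)$ lies inside $[0, 1 - 2^{-n}]$ or extends past one of its endpoints) shows that $T(x) \cap [0, 1 - 2^{-n}]$ always has length strictly greater than $h$; the hypothesis $x > 2^{-(n+1)}$ is exactly what rules out the degenerate left-boundary case, and $x < 1 - 2^{-(n+1)}$ the symmetric right one. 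Since the sorted values of $\{\beta_1, \ldots, \beta_k\}$ have consecutive gaps at most $h$ and span all of $[0, 1 - 2^{-n}]$, every open subinterval of this range of length exceeding $h$ contains some $\beta_i$. The only technical step is this length estimate; apart from it, the construction is completely explicit and presents no real obstacle.
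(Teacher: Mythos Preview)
Your proof is correct and rests on the same core idea as the paper's: slide $J$ through $[0,1]$ in steps of size $h=2^{-(n+2)}$ by piecewise affine maps that restrict to translations on the current image of $J$. The paper first carries $J$ to the fixed interval $[h,5h]$ and then shifts rightward only, working with the explicit grid $J_i=[ih,(i+4)h]$ and verifying (ii) by a direct computation; you instead use clipped translations $x\mapsto \max\{0,\min\{1,x+c_i\}\}$, first sweep left to $0$ and then right to $1-2^{-n}$, and establish (ii) by a pigeonhole-type length estimate on the target set $T(x)$. Both executions are equally elementary; your clipped translations make the map construction fully explicit, and your open-interval argument for $T(x)$ actually delivers the strict inequality $\dist>2^{-(n+2)}$ stated in the lemma (the paper's computation as written gives only $\geq$).
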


\begin{proof}
Consider the intervals%
\begin{equation*}
  J_i := \left[\frac{i}{2^{n+2}},\frac{i}{2^{n+2}} + \frac{1}{2^n}\right] = \left[\frac{i}{2^{n+2}},\frac{i+4}{2^{n+2}}\right],\quad 1 \leq i \leq 2^{n+2} - 5 =: k.%
\end{equation*}
Let $x\in [2^{-(n+1)},1-2^{-(n+1)}]$. Then $x$ is contained in an interval of the form%
\begin{equation*}
  \left[\frac{i}{2^{n+2}},\frac{i+1}{2^{n+2}}\right],\quad i \in \{2,\ldots,2^{n+2}-3\}.%
\end{equation*}
This implies $x \in J_{i-1}$. Moreover, $\dist(x,\partial J_{i-1}) \geq 2^{-(n+2)}$, because%
\begin{equation*}
  x - \frac{i-1}{2^{n+2}} \geq \frac{i}{2^{n+2}} - \frac{i-1}{2^{n+2}} = \frac{1}{2^{n+2}}%
\end{equation*}
and%
\begin{equation*}
  \frac{i-1}{2^{n+2}} + \frac{1}{2^n} - x \geq \frac{i-1}{2^{n+2}} + \frac{1}{2^n} - \frac{i+1}{2^{n+2}} = \frac{1}{2^{n+1}}.%
\end{equation*}
The maps $f_1,\ldots,f_k$ are constructed as follows: if $J = [a,b]$, then $f_1$ can be chosen as a piecewise affine map with $f_1(a) = 2^{-(n+2)}$, $f_1(b) = 2^{-(n+2)} + 2^{-n}$ such that $f_1$ has slope $1$ on $[a,b]$ and slope $\leq1$ on $[0,a]$ and $[b,1]$. The maps $f_i$ for $2 \leq i \leq k$ are chosen similarly, mapping $J_{i-1}$ to $J_i$ isometrically.%
\end{proof}

\begin{proposition}\label{prop_3.16}
There exists an NDS $(I,f_{\infty})$ on the unit interval $I$ with an IMS $\mu_{\infty}$ such that $\EC_{\Mis}(\mu_{\infty})$ does not contain any constant sequence of partitions into subintervals of diameter less than one.%
\end{proposition}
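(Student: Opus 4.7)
I construct $f_{\infty}$ by concatenating stages. Stage $n\geq1$ consists of the affine contraction $g_n(x):=x/2+1/4$ (so $g_n([0,1])=[1/4,3/4]\subset(0,1)$) followed by the $k(n)=2^{n+2}-5$ maps $f_1^{(n)},\ldots,f_{k(n)}^{(n)}$ supplied by Lemma~\ref{lem_hl} for the interval $J^{(n)}:=g_n(I^{(n-1)})$, where $I^{(n-1)}$ denotes the endpoint interval of stage $n-1$ and $I^{(0)}:=I$. By construction $I^{(n-1)}$ has length $2^{-(n-1)}$, so $J^{(n)}\subset(0,1)$ is compact of length $2^{-n}$ and the lemma applies. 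Set $I^{(n)}:=f_1^{(n)}\circ\cdots\circ f_{k(n)}^{(n)}(J^{(n)})=J_{k(n)}^{(n)}$, and take $\mu_0$ to be Lebesgue measure on $I$. An easy induction shows that at every intermediate time $m$ occurring during stage $n$, the pushforward $\mu_m$ is the normalized uniform measure on some interval of length $2^{-n}$: the contraction $g_n$ takes the uniform measure on $I^{(n-1)}$ to the uniform measure on $J^{(n)}$, and by Lemma~\ref{lem_hl}(i) each subsequent $f_j^{(n)}$ is an isometry on the current support, preserving uniformity. Every $g_n$ has slope $1/2$ and every $f_j^{(n)}$ has slope $\leq1$ by Lemma~\ref{lem_hl}(iii), so the whole sequence is $1$-Lipschitz, hence equicontinuous, making $(I,f_{\infty})$ a topological NDS.

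Now let $\PC=\{P_1,\ldots,P_M\}$ be any constant sequence of partitions of $I$ into subintervals with $\diam\PC<1$. Then $M\geq2$ and some interior breakpoint $x^*\in(0,1)$ is shared by two adjacent pieces $P_i$ (to the left) and $P_j$ (to the right). Suppose, for contradiction, that $\PC_m\equiv\PC$ is admissible. Fixing $\ep:=1/8$, obtain the corresponding $\delta>0$ and compact sets $K_{m,i}\subset P_i$ from the definition of $\EC_{\Mis}$. Choose $n$ so large that (a) $x^*\in(2^{-(n+1)},1-2^{-(n+1)})$, (b) $(x^*-2^{-(n+2)},x^*+2^{-(n+2)})\subset P_i\cup P_j$, and (c) $2^{-(n+1)}<\delta$.

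By Lemma~\ref{lem_hl}(ii) applied to the stage-$n$ maps, there is a time $m^*$ inside stage $n$ at which $\mu_{m^*}$ is the uniform measure of density $2^n$ on an interval $J_{i^*}^{(n)}$ with $\dist(x^*,\partial J_{i^*}^{(n)})>2^{-(n+2)}$. Hence both $(x^*-2^{-(n+2)},x^*)$ and $(x^*,x^*+2^{-(n+2)})$ lie in $J_{i^*}^{(n)}$, and each carries $\mu_{m^*}$-mass equal to $2^n\cdot 2^{-(n+2)}=1/4>\ep$. By (b) the first is contained in $P_i$ and the second in $P_j$, so the compact sets $K_{m^*,i}$ and $K_{m^*,j}$ must contain points $y\in(x^*-2^{-(n+2)},x^*]$ and $z\in[x^*,x^*+2^{-(n+2)})$ respectively. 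Then $|z-y|<2^{-(n+1)}<\delta$ by (c), contradicting $D_n(K_{m^*,i},K_{m^*,j})\geq\delta$. Hence no such constant $\PC$ lies in $\EC_{\Mis}(\mu_{\infty})$.

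The main technical point is arranging the stages so that the density of $\mu_m$ is exactly $2^n$ at the moment guaranteed by Lemma~\ref{lem_hl}(ii); this requires checking that $g_n$ acts affinely with slope $1/2$ on the current support, that the images fit inside $(0,1)$, and that Lemma~\ref{lem_hl}(i) then applies at every subsequent step to preserve uniformity. Once this bookkeeping is in place, the conclusion is a clean, symmetric mass-concentration argument around $x^*$.
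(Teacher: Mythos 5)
Your proposal is correct and follows essentially the same route as the paper: the same alternation of slope-$1/2$ contractions with the translation maps of Lemma \ref{lem_hl}, the same pushforward of Lebesgue measure to uniform measures on intervals of length $2^{-n}$, and the same mass-concentration argument of weight $1/4>\ep=1/8$ on both sides of an interior breakpoint $x^*$. The only cosmetic difference is that you run the final contradiction in the contrapositive direction (the mass bound forces both compact sets to approach $x^*$, violating the $\delta$-separation), whereas the paper uses the separation to force one side's mass to exceed $\ep$; these are logically the same step.
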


\begin{proof}
The proof proceeds in two steps.%

\emph{Step 1.} Informally, the idea of the construction is as follows: we start with the standard Lebesgue measure as an initial measure $\mu_0$ and choose the first map $f_0$ in the sequence so that it scales the Lebesgue measure down to an interval of length $1/2$. Then we apply finitely many maps that just translate the support of the scaled-down Lebesgue measure around the unit interval so that many points will be contained in the interior of the support with some uniform distance to the boundary. Then we scale down by factor $1/2$ again and move the support around in the same fashion, and so on. By this construction, the support of the measure becomes a smaller and smaller interval when $n$ becomes large, and is moved around all over the interval.% 

Formally, we construct $f_{\infty}$ as follows. The map $f_0$ is an affine map of slope $1/2$ such that the interval $J_0 := f_0(I) \subset (0,1)$. Then $f_1,\ldots,f_{n_1}$ are piecewise affine maps chosen according to Lemma \ref{lem_hl} with $J = J_0$, $n=1$ and $n_1=k(1)$. Next, $f_{n_1 + 1}$ is chosen to be an affine map of slope $1/2$ that maps $f_0^{n_1+1}(I) = f_1^{n_1}(J_0)$ into $(0,1)$. Again, $f_{n_1+2},\ldots,f_{n_2}$ are piecewise affine maps chosen according to Lemma \ref{lem_hl} with $J = f_1^{n_1}(J_0)$, $n=2$
and $n_2=n_1+k(2)+1$. Going on in this way, we produce a sequence $f_{\infty} = (f_n)_{n=0}^{\infty}$ that has the following property:%

For every $x \in (0,1)$ there is an $n_0$ such that for all $n\geq n_0$, $x$ is contained in a subinterval $J_n\subset I$ of length $2^{-n}$ with $\dist(x,\partial J_n) > 2^{-(n+2)}$, and each $J_n$ is of the form $J_n = f_0^{k_n}(I)$ for some $k_n$.%

\emph{Step 2.} We let $\mu_0$ be the standard Lebesgue measure on $I$ and consider the induced IMS $\mu_n = f_0^n\mu_0$. By construction, $\mu_n$ is the normalized Lebesgue measure on the interval $f_0^n(I)$. Now pick $x\in(0,1)$ and choose $n_0$ according to the result of Step 1. Then for all $n\geq n_0$ we obtain%
\begin{equation*}
  \mu_{k_n}([x-2^{-(n+2)},x]) = \mu_{k_n}([x,x+2^{-(n+2)}]) = 2^{-(n+2)}2^{n} = {1}/{4}.%
\end{equation*}
Now assume to the contrary that $\EC_{\Mis}(\mu_{\infty})$ contains a constant sequence $\PC_n \equiv \PC = \{I_1,\ldots,I_r\}$ with subintervals $I_1 < I_2 < \cdots < I_r$ satisfying $|I_j| < 1$ for $j=1,\ldots,r$. Then there exists $x_* \in (0,1)$, which is a common boundary point of two of these subintervals, say $I_1$ and $I_2$. Let $n_0 = n_0(x_*)$ be chosen according to Step 1. To disprove that $\PC_n\equiv\PC$ is admissible, consider $\ep := 1/8$. Assume to the contrary that there are $\delta>0$ and compact sets $K_{n,j} \subset I_j$ such that%
\begin{enumerate}
\item[(i)] $x \in K_{n,j_1}$ and $y \in K_{n,j_2}$ implies $|x-y| \geq \delta$, if $j_1 \neq j_2$, $n\in\N_0$,%
\item[(ii)] $\mu_n(I_j \backslash K_{n,j}) \leq \ep = 1/8$ for all $n\geq0$.%
\end{enumerate}
Property (i) implies that for each $n$, $I_1 \backslash K_{k_n,1}$ contains $[x_* - \delta/2,x_*]$, or $I_2 \backslash K_{k_n,2}$ contains $[x_*,x_* + \delta/2]$. Choose $n\geq n_0$ large enough that $2^{-(n+2)} < \delta/2$. Then%
\begin{equation*}
  \mu_{k_n}(I_1 \backslash K_{k_n,1}) \geq \mu_{k_n}([x_*-\delta/2,x_*]) \geq \mu_{k_n}([x_*-2^{-(n+2)},x_*]) = {1}/{4} > \ep%
\end{equation*}
or%
\begin{equation*}
  \mu_{k_n}(I_2 \backslash K_{k_n,2}) \geq \mu_{k_n}([x_*,x_*+\delta/2]) \geq \mu_{k_n}([x_*,x_*+2^{-(n+2)}]) = {1}/{4} > \ep,%
\end{equation*}
in contradiction to (ii). Since $|f_n'(x)| \leq 1$ for all $n$, $f_{\infty}$ is equicontinuous.%
\end{proof}

We will now return to the general, possibly non-stationary, sequence of spaces $X_\infty=(X_n)_{n=0}^\infty$. We are in position to prove the following extension of the product theorem (item (iii) in Proposition \ref{prop_finescaleprops}).%

\begin{proposition}\label{prop_productext}
Let the assumptions of Proposition \ref{prop_finescaleprops}(iii) be satisfied. Then, under each of the following three conditions, equality holds in \eqref{eq_productineq}:%
\begin{enumerate}
\item[(a)] $h_{\EC_{\Mis}}(g_{\infty}) = 0$. In this case,%
\begin{equation*}
  h_{\EC_{\Mis}}(f_{\infty} \tm g_{\infty}) = h_{\EC_{\Mis}}(f_{\infty}).%
\end{equation*}
\item[(b)] One of the systems is autonomous, say $(Y_n,g_n,\nu_n) \equiv (Y,g,\nu)$. In this case,%
\begin{equation*}
  h_{\EC_{\Mis}}(f_{\infty}\tm g_{\infty}) = h_{\EC_{\Mis}}(f_{\infty}) + h_{\nu}(g),%
\end{equation*}
where $h_{\nu}(g)$ is Kolmogorov-Sinai entropy of $g$ w.r.t.~$\nu$.%
\item[(c)] $(X_{\infty},f_{\infty}) = (Y_{\infty},g_{\infty})$ and $\mu_{\infty} = \nu_{\infty}$. In this case,%
\begin{equation*}
  h_{\EC_{\Mis}}(f_{\infty} \tm f_{\infty}) = 2h_{\EC_{\Mis}}(f_{\infty}).%
\end{equation*}
\end{enumerate}
\end{proposition}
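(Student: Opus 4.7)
The upper bound in all three cases is provided by \eqref{eq_productineq}, so the task reduces to proving matching lower bounds. Throughout, I plan to exploit that $\mu_{\infty}\tm\nu_{\infty}$ is a fine-scale IMS by Proposition \ref{prop_finescaleprops}(iii), which via Theorem \ref{thm_mecomp} allows the computation of $h_{\EC_{\Mis}}(f_{\infty}\tm g_{\infty})$ as the supremum of $h(f_{\infty}\tm g_{\infty};\PC_{\infty}\tm\QC_{\infty})$ over admissible product partitions of arbitrarily small diameter, together with the product identity $H_{\mu\tm\nu}(\PC\tm\QC)=H_{\mu}(\PC)+H_{\nu}(\QC)$ recalled in the proof of that proposition.

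For (a), the natural choice is the trivial sequence $\QC_{\infty}=(\{Y_n\})_{n=0}^{\infty}$ on the second factor, which lies in $\EC_{\Mis}(\nu_{\infty})$ (the separation condition is vacuous and one may take $K_{n,1}=Y_n$). For any $\PC_{\infty}\in\EC_{\Mis}(\mu_{\infty})$ the product partition $\PC_{\infty}\tm\QC_{\infty}$ yields
\begin{equation*}
  H_{\mu_0\tm\nu_0}\Big(\bigvee_{i=0}^{n-1}(f_0^i\tm g_0^i)^{-1}(\PC_i\tm\QC_i)\Big) = H_{\mu_0}\Big(\bigvee_{i=0}^{n-1}f_0^{-i}\PC_i\Big),
\end{equation*}
so $h(f_{\infty}\tm g_{\infty};\PC_{\infty}\tm\QC_{\infty})=h(f_{\infty};\PC_{\infty})$. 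Taking the supremum over $\PC_{\infty}$ gives $h_{\EC_{\Mis}}(f_{\infty}\tm g_{\infty})\geq h_{\EC_{\Mis}}(f_{\infty})$, which combined with $h_{\EC_{\Mis}}(g_{\infty})=0$ and \eqref{eq_productineq} forces equality.

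For (b), since the single measure $\nu$ trivially forms a strongly relatively compact set, Theorem \ref{thm_constcond}(i) provides constant sequences $\QC^k\in\EC_{\Mis}(\nu)$ with $\diam\QC^k\to 0$. Fixing $\PC_{\infty}\in\EC_{\Mis}(\mu_{\infty})$ and such a $\QC^k$, the additivity identity gives
\begin{equation*}
  \tfrac{1}{n}H_{\mu_0\tm\nu}\Big(\bigvee_{i=0}^{n-1}f_0^{-i}\PC_i\tm\bigvee_{i=0}^{n-1}g^{-i}\QC^k\Big)=\tfrac{1}{n}H_{\mu_0}\Big(\bigvee_{i=0}^{n-1}f_0^{-i}\PC_i\Big)+\tfrac{1}{n}H_{\nu}\Big(\bigvee_{i=0}^{n-1}g^{-i}\QC^k\Big).
\end{equation*}
The second summand converges to $h_{\nu}(g;\QC^k)$ as a true limit by the standard subadditivity argument for a single measure-preserving map, whereas the first only has a $\limsup$. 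The elementary fact that $\limsup_n(a_n+b_n)=\limsup_n a_n+\lim_n b_n$ whenever $(b_n)$ converges then yields $h(f_{\infty}\tm g;\PC_{\infty}\tm\QC^k)=h(f_{\infty};\PC_{\infty})+h_{\nu}(g;\QC^k)$; supremizing via Theorem \ref{thm_mecomp} on both factors and using Corollary \ref{cor_autonomous} to identify $\sup_k h_{\nu}(g;\QC^k)=h_{\nu}(g)$ delivers the lower bound $h_{\EC_{\Mis}}(f_{\infty}\tm g_{\infty})\geq h_{\EC_{\Mis}}(f_{\infty})+h_{\nu}(g)$.

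Case (c) is the simplest: choosing $\QC_{\infty}=\PC_{\infty}$, the additivity identity gives $H_{\mu_0\tm\mu_0}\bigl(\bigvee_{i=0}^{n-1}(f_0^i\tm f_0^i)^{-1}(\PC_i\tm\PC_i)\bigr)=2H_{\mu_0}\bigl(\bigvee_{i=0}^{n-1}f_0^{-i}\PC_i\bigr)$, hence $h(f_{\infty}\tm f_{\infty};\PC_{\infty}\tm\PC_{\infty})=2h(f_{\infty};\PC_{\infty})$, and Theorem \ref{thm_mecomp} yields the required lower bound. The only genuine subtlety across the three cases is the $\limsup$/$\lim$ interchange required in (b), which is precisely where the autonomy of the second factor enters; without it, the corresponding identity would fail in general, in keeping with the remark following Proposition \ref{prop_finescaleprops}.
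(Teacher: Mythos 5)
Your proposal is correct, and for parts (b) and (c) it coincides with the paper's argument: the same additivity identity $H_{\mu\tm\nu}(\PC\tm\QC)=H_{\mu}(\PC)+H_{\nu}(\QC)$, the same $\limsup_n(a_n+b_n)\geq\limsup_n a_n+\liminf_n b_n$ step exploiting that the limit exists for the autonomous factor, and the same passage to fine-scale sequences via Theorem \ref{thm_mecomp} and Corollary \ref{cor_autonomous}. The only genuine divergence is in part (a): the paper obtains the lower bound $h_{\EC_{\Mis}}(f_{\infty}\tm g_{\infty})\geq h_{\EC_{\Mis}}(f_{\infty})$ by observing that the coordinate projections $\pi_n:X_n\tm Y_n\rightarrow X_n$ form an equi-semiconjugacy and invoking the external monotonicity result \cite[Prop.~27]{Ka1}, whereas you pair each $\PC_{\infty}\in\EC_{\Mis}(\mu_{\infty})$ with the trivial sequence $\QC_n=\{Y_n\}$ and compute directly that $h(f_{\infty}\tm g_{\infty};\PC_{\infty}\tm\QC_{\infty})=h(f_{\infty};\PC_{\infty})$. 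Your route is more self-contained (it reuses only the admissibility-of-products argument already established in the proof of Proposition \ref{prop_finescaleprops}(iii), applied to the trivially admissible sequence $(\{Y_n\})_{n=0}^{\infty}$), while the paper's route is shorter given the cited lemma and, being a semiconjugacy argument, does not even require $\nu_{\infty}$ to be fine-scale for that particular inequality. Both are valid; the one point you should make explicit is that $\PC_{\infty}\tm\QC_{\infty}$ with the trivial second factor is indeed in $\EC_{\Mis}(\mu_{\infty}\tm\nu_{\infty})$, which is immediate by taking $K_{n,1}=Y_n$ on the second factor.
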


\begin{proof}
For (a) we only have to show $h_{\EC_{\Mis}}(f_{\infty}\tm g_{\infty}) \geq h_{\EC_{\Mis}}(f_{\infty})$. To this end, consider $\pi_n: X_n \tm Y_n \rightarrow X_n$, $(x,y) \mapsto x$. This sequence is an equi-semiconjugacy from $f_{\infty} \tm g_{\infty}$ to $f_{\infty}$. Then the inequality follows from \cite[Prop.~27]{Ka1}.%

To show (b), let $\PC_{\infty} \in \EC_{\Mis}(\mu_{\infty})$ and let $\QC$ be a finite measurable partition of $Y$. As in the proof of Proposition \ref{prop_finescaleprops}(iii) we conclude%
\begin{equation*}
  h(f_{\infty} \tm g;\PC_{\infty} \tm \QC) = \limsup_{n\rightarrow\infty}\frac{1}{n}\Big[H_{\mu_0}\Big(\bigvee_{i=0}^{n-1}f_0^{-i}\PC_i\Big) + H_{\nu}\Big(\bigvee_{i=0}^{n-1}g^{-i}\QC\Big)\Big].%
\end{equation*}
Using the inequality $\limsup_n (a_n + b_n) \geq \limsup_n a_n + \liminf_n b_n$, we obtain%
\begin{equation*}
  h(f_{\infty} \tm g;\PC_{\infty} \tm \QC) \geq \limsup_{n\rightarrow\infty}\frac{1}{n}H_{\mu_0}\Big(\bigvee_{i=0}^{n-1}f_0^{-i}\PC_i\Big) + \lim_{n\rightarrow\infty}\frac{1}{n}H_{\nu}\Big(\bigvee_{i=0}^{n-1}g^{-i}\QC\Big),%
\end{equation*}
because for an autonomous system the limit in the above expression exists. Now let $(\PC_{\infty}^k)_{k=0}^{\infty}$ and $(\QC^k)_{k=0}^{\infty}$ be fine-scale sequences, where $\PC_{\infty}^k \in \EC_{\Mis}(\mu_{\infty})$. Then, from the above it follows that%
\begin{align*}
  h_{\EC_{\Mis}(\mu_{\infty}\tm\nu)}(f_{\infty} \tm g) &= \lim_{k\rightarrow\infty}h(f_{\infty}\tm g;\PC_{\infty}^k \tm \QC^k)\\
	&\geq \lim_{k\rightarrow\infty}h(f_{\infty};\PC_{\infty}^k) + \lim_{k\rightarrow\infty}h(g;\QC^k)\\
	&= h_{\EC_{\Mis}(\mu_{\infty})}(f_{\infty}) + h_{\nu}(g),%
\end{align*}
where we use Corollary \ref{cor_autonomous}. Together with Proposition \ref{prop_finescaleprops}(iii), (b) is proved.%

Finally, to prove (c), note that for any admissible $\PC_{\infty}$ we have%
\begin{equation*}
  h(f_{\infty}\tm f_{\infty};\PC_{\infty} \tm \PC_{\infty}) =
  2\limsup_{n\rightarrow\infty}\frac{1}{n}H_{\mu_0}\Big(\bigvee_{i=0}^{n-1}f_0^{-i}\PC_i\Big) = 2 h(f_{\infty};\PC_{\infty}).%
\end{equation*}
By considering a fine-scale sequence $(\PC_{\infty}^k)_{k=0}^{\infty}$, the assertion easily follows.%
\end{proof}

In the classical theory sometimes it is sufficient to compute the entropy on a single partition, which is therefore called a \emph{generator} of the entropy. This is the case, e.g., if the given map is expansive. In \cite{Ka2} we introduced the notion of \emph{strong uniform expansivity} for nonautonomous systems. The next result shows that this condition guarantees the existence of a generating sequence of partitions, provided that the IMS is sufficiently nice.%

\begin{corollary}
Let $(X_{\infty},f_{\infty})$ be a topological NDS which is strongly uniformly expansive, i.e., there is $\delta>0$ such that for any $\ep>0$ there is $N\in\N$ with%
\begin{equation*}
  \max_{0\leq i\leq N}d_{n+i}(f_n^i(x),f_n^i(y)) < \delta \quad\Rightarrow\quad d_n(x,y) < \ep%
\end{equation*}
for all $n\geq0$. Let $\mu_{\infty}$ be an IMS for $f_{\infty}$ and assume that $\EC_{\Mis}(\mu_{\infty})$ contains a sequence $\PC_{\infty}$ with $\diam\PC_{\infty} < \delta$. Then $\mu_{\infty}$ is a fine-scale IMS and%
\begin{equation*}
  h(f_{\infty},\mu_{\infty}) = h(f_{\infty},\mu_{\infty};\PC_{\infty}).%
\end{equation*}
\end{corollary}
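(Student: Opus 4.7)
The plan is to build a fine-scale sequence by refining $\PC_\infty$ along forward orbits, exactly as in the classical proof that an expansive generator computes the entropy. Given $\ep>0$, let $N=N(\ep)$ be as in the strong uniform expansivity hypothesis and set
\begin{equation*}
  \RC^\ep_n := \bigvee_{i=0}^N f_n^{-i}\PC_{n+i},\qquad n\geq 0.
\end{equation*}
If $x,y$ lie in a common element of $\RC^\ep_n$, then $f_n^i(x)$ and $f_n^i(y)$ sit in the same element of $\PC_{n+i}$ for each $0\leq i\leq N$, hence $d_{n+i}(f_n^i(x),f_n^i(y))\leq\diam\PC_\infty<\delta$, and strong uniform expansivity forces $d_n(x,y)<\ep$. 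Thus $\diam\RC^\ep_\infty\leq\ep$.

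Next I would verify that $\RC^\ep_\infty\in\EC_{\Mis}(\mu_\infty)$. Let $m$ be a bound on $\#\PC_n$; then $\#\RC^\ep_n\leq m^{N+1}$, so cardinalities stay bounded. For the admissibility condition, fix $\alpha>0$ and apply the definition of $\EC_{\Mis}$ to $\PC_\infty$ with parameter $\alpha/(N+1)$ to obtain $\delta'>0$ and compact $K_{n,j}\subset P_{n,j}$. Index the elements of $\RC^\ep_n$ by tuples $\bar j=(j_0,\ldots,j_N)$ and put $\tilde K_{n,\bar j}:=\bigcap_{i=0}^N f_n^{-i}(K_{n+i,j_i})$, which is compact as a closed subset of $X_n$. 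A union bound combined with the invariance $f_n^i\mu_n=\mu_{n+i}$ yields $\mu_n(\cdot\setminus\tilde K_{n,\bar j})\leq\alpha$ on each element. The main obstacle is the lower bound $D_n(\tilde K_{n,\bar j},\tilde K_{n,\bar j'})>0$ for $\bar j\neq\bar j'$: picking $i^*$ with $j_{i^*}\neq j'_{i^*}$, the images under $f_n^{i^*}$ are forced into $K_{n+i^*,j_{i^*}}$ and $K_{n+i^*,j'_{i^*}}$, which are $\delta'$-apart. To pull this gap back to $X_n$ I would invoke uniform equicontinuity of the iterates $f_n^i$ for $0\leq i\leq N$, obtained by an $(N{+}1)$-fold iteration of the one-step equicontinuity built into the definition of a topological NDS; its contrapositive delivers a constant $\eta=\eta(\delta',N)>0$, independent of $n$, with $D_n(\tilde K_{n,\bar j},\tilde K_{n,\bar j'})\geq\eta$. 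Since $\diam\RC^\ep_\infty\to 0$ as $\ep\to 0$, this shows $\mu_\infty$ is a fine-scale IMS.

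To finish, I would exploit the telescoping identity
\begin{equation*}
  \bigvee_{i=0}^{n-1}f_0^{-i}\RC^\ep_i \;=\; \bigvee_{k=0}^{n+N-1}f_0^{-k}\PC_k,
\end{equation*}
which follows at once from $f_0^{-i}\circ f_i^{-j}=f_0^{-(i+j)}$. Writing
\begin{equation*}
  \frac{1}{n}H_{\mu_0}\Big(\bigvee_{i=0}^{n-1}f_0^{-i}\RC^\ep_i\Big)=\frac{n+N}{n}\cdot\frac{1}{n+N}H_{\mu_0}\Big(\bigvee_{k=0}^{n+N-1}f_0^{-k}\PC_k\Big),
\end{equation*}
and passing to $\limsup_{n\rightarrow\infty}$ gives $h(f_\infty,\mu_\infty;\RC^\ep_\infty)=h(f_\infty,\mu_\infty;\PC_\infty)$ for every $\ep>0$. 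Theorem \ref{thm_mecomp} then delivers
\begin{equation*}
  h(f_\infty,\mu_\infty)=\lim_{\ep\searrow 0}h(f_\infty,\mu_\infty;\RC^\ep_\infty)=h(f_\infty,\mu_\infty;\PC_\infty),
\end{equation*}
which is the desired equality.
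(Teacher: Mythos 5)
Your proof is correct and follows essentially the same route as the paper: refine $\PC_{\infty}$ along forward orbits, use strong uniform expansivity to shrink the diameters, and then invoke Theorem \ref{thm_mecomp}. The only difference is that the paper imports the admissibility of the refined sequence and the equality $h(f_{\infty};\PC^{\langle k\rangle}_{\infty})=h(f_{\infty};\PC_{\infty})$ as citations from \cite{Ka1}, whereas you prove both facts directly (via the intersected compact sets together with equicontinuity of the iterates, and via the telescoping identity); both of your ad hoc arguments are sound.
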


\begin{proof}
We consider the sequence $\PC^{\langle k \rangle}_{\infty}$, $\PC^{\langle k \rangle}_n = \bigvee_{i=0}^{k-1}f_n^{-i}\PC_{n+i}$. According to \cite[Def.~15 and Prop.~26]{Ka1}, $\PC^{\langle k \rangle}_{\infty} \in \EC_{\Mis}$ and by \cite[Prop.~9(v)]{Ka1}, $h(f_{\infty};\PC^{\langle k \rangle}_{\infty}) = h(f_{\infty};\PC_{\infty})$. Hence, if $\diam\PC^{\langle k \rangle}_{\infty} \rightarrow 0$ for $k\rightarrow\infty$, then Theorem \ref{thm_mecomp} yields the result. Let $\emptyset \neq P \in \PC^{\langle k \rangle}_n$ for some $k$ and $n$. Fix arbitrary $x,y\in P$. Then, for $i\in\{0,1,\ldots,k-1\}$, $f_n^i(x),f_n^i(y)$ are contained in the same element of $\PC_{n+i}$, implying $d_{n+i}(f_n^i(x),f_n^i(y)) < \delta$ for $i=0,\ldots,k-1$. For $k = k(\ep)$ large enough, this implies $d_n(x,y) < \ep$. Hence, $\diam\PC^{\langle k\rangle}_{\infty}<\ep$, completing the proof.%
\end{proof}

\section{Nonstationary subshifts of finite type}\label{sec_nsfts}%

In this section, we consider the time-dependent analogues of subshifts of finite type (topological Markov chains) studied in \cite{AFi,Fis} by Arnoux and Fisher. We start with some definitions.%

Let $(\AC_i)_{i=0}^{\infty}$ be a sequence of finite alphabets $\AC_i = \{1,\ldots,l_i\}$. Consider also a sequence of $0$-$1$-matrices $(L_i)_{i=0}^{\infty}$ of corresponding dimensions $l_i \tm l_{i+1}$. For any nonnegative integers $n\leq m$ we write%
\begin{equation*}
  w(n,m) := \#\left\{ (x_n,\ldots,x_m)\ :\ x_i \in \AC_i \mbox{ and } (L_i)_{x_ix_{i+1}} = 1,\ n \leq i \leq m-1 \right\}.% 
\end{equation*}
Furthermore, we make the following assumptions:%
\begin{enumerate}
\item[(A1)] The sequence $(L_i)_{i=0}^{\infty}$ is reduced, meaning that none of the rows or columns of each $L_i$ is identically zero (cf.~\cite[Def.~2.1]{Fis}).%
\item[(A2)] The sequence $(l_i)_{i=0}^{\infty}$ of alphabet sizes is bounded.%
\end{enumerate}
It is easy to see that the following properties are satisfied:
\begin{enumerate}
\item[(W1)] $w(n,n) = l_n$.%
\item[(W2)] $w(n,m+1) \geq w(n,m)$ for all $m\geq n$.%
\item[(W3)] $w(0,n+m) \leq w(0,n)w(n+1,n+m)$ for $n\geq0$, $m\geq1$.%
\end{enumerate}

Now we define a sequence of spaces $\Sigma^{k,+}_{(L)}$, $k\geq0$, by%
\begin{equation*}
  \Sigma^{k,+}_{(L)} := \left\{ (x_k,x_{k+1},x_{k+2},\ldots)\ :\ x_i \in \AC_i \mbox{ and } (L_i)_{x_ix_{i+1}} = 1,\ \forall i\geq k \right\},%
\end{equation*}
and a sequence of maps by%
\begin{equation*}
  \sigma_k:\Sigma^{k,+}_{(L)} \rightarrow \Sigma^{k+1,+}_{(L)},\quad \sigma_k(x_k,x_{k+1},x_{k+2},\ldots) = (x_{k+1},x_{k+2},x_{k+3},\ldots).%
\end{equation*}
That is, $\sigma_k$ is the usual left shift of sequences, restricted to $\Sigma^{k,+}_{(L)}$. On $\Sigma^{k,+}_{(L)}$ we define a metric $d_k$ by%
\begin{equation*}
  d_k(x,y) := \left\{\begin{array}{cl}
	                               0 & \mbox{ if } x = y\\
																 1 & \mbox{ if } x_0 \neq y_0\\
															w(k,l)^{-1} & \mbox{ if } l = \max\{ r \geq k : x_i = y_i,\ i = k,\ldots,r \}% 
										 \end{array}\right.%
\end{equation*}
for any two $x = (x_k,x_{k+1},\ldots)$ and $y = (y_k,y_{k+1},\ldots)$ in $\Sigma^{k,+}_{(L)}$. The proof of the following proposition uses standard arguments and will be omitted. We just note that for the equicontinuity of $\sigma_{\infty}$ assumption (A2) is essential.%

\begin{proposition}\label{prop_nsft}
$(\Sigma^{k,+}_{(L)},d_k)$ is a compact metric space and $\sigma_k$ is surjective. Moreover, the sequence $\sigma_{\infty} := (\sigma_k)_{k=0}^{\infty}$ is equicontinuous.%
\end{proposition}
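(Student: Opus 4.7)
The plan is to handle the three assertions separately, with equicontinuity carrying the essential quantitative content and using assumption (A2) crucially.

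For the metric and compactness statement, I would first note that symmetry and positive-definiteness of $d_k$ are built into the definition, and the triangle inequality can be verified in ultrametric form $d_k(x,z) \leq \max\{d_k(x,y),d_k(y,z)\}$ using the monotonicity (W2) of $w(k,\cdot)$ (the common agreement length dominates the smaller of the pairwise ones). Compactness is cleanest via realizing $\Sigma^{k,+}_{(L)}$ as a closed subset of the compact product $\prod_{i\geq k}\AC_i$ (finite factors, Tychonoff), closedness being immediate since each constraint $(L_i)_{x_ix_{i+1}}=1$ cuts out a clopen set in the product. A quick verification that $d_k$-balls of radius $1/w(k,l)$ coincide with the cylinders specifying coordinates $k,\ldots,l$ then shows that the metric topology agrees with the subspace topology, so compactness transfers.

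Surjectivity of $\sigma_k$ is a one-line consequence of (A1): given any $(x_{k+1},x_{k+2},\ldots)\in\Sigma^{k+1,+}_{(L)}$, reducedness of $L_k$ provides some $x_k\in\AC_k$ with $(L_k)_{x_kx_{k+1}}=1$, and prepending $x_k$ yields a valid preimage in $\Sigma^{k,+}_{(L)}$.

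The interesting step, and the one where (A2) is indispensable, is equicontinuity. Set $M:=\sup_i l_i$, which is finite by (A2). The key combinatorial estimate is that every admissible tail $(x_{k+1},\ldots,x_l)$ has at most $l_k\leq M$ admissible backward extensions by a letter of $\AC_k$, giving the uniform bound $w(k,l)\leq M\cdot w(k+1,l)$ for all $l\geq k+1$. If $x,y\in\Sigma^{k,+}_{(L)}$ agree on positions $k,\ldots,l$ with $l\geq k+1$, then $\sigma_kx,\sigma_ky$ agree on $k+1,\ldots,l$, and the estimate above yields
$$ d_{k+1}(\sigma_kx,\sigma_ky)\;\leq\;\frac{1}{w(k+1,l)}\;\leq\;M\cdot d_k(x,y). $$
Given $\ep>0$, I would then take $\delta:=\min\{\ep/M,\,1/(2M)\}$: the second term forces $d_k(x,y)<\delta$ to exclude the degenerate cases $d_k(x,y)\in\{1,1/l_k\}$, so that $l\geq k+1$ is automatic, and the first forces the displayed inequality to give $d_{k+1}(\sigma_kx,\sigma_ky)<\ep$. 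Since $\delta$ is independent of $k$, this establishes equicontinuity of $\sigma_\infty$. The main obstacle without (A2) would be precisely that the constant $M$ could blow up and destroy the required uniformity in $k$.
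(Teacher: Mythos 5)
Your proof is correct, and it fills in exactly what the paper leaves out: the authors omit the proof as ``standard arguments,'' remarking only that (A2) is essential for equicontinuity, which is precisely the point your argument isolates via the uniform bound $w(k,l)\le M\,w(k+1,l)$ with $M=\sup_i l_i$. The three parts (ultrametric plus Tychonoff for compactness, reducedness of columns for surjectivity, the $M$-Lipschitz estimate for equicontinuity) are the intended route, so there is nothing to compare beyond noting that your choice of $\delta$ correctly handles the degenerate cases where the agreement length does not exceed $k$.
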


\begin{definition}
Following Arnoux and Fisher \cite{AFi}, we call the topological NDS $(\Sigma^{\infty,+}_{(L)},\sigma_{\infty})$ a \emph{nonstationary subshift of finite type}, or briefly an NSFT. Moreover, we introduce the notation $  L^{(k,n)} := L_k L_{k+1} \cdots L_{n-1} $ for $n > k$.\end{definition}

\subsection{Topological entropy}%

In this subsection, we give a formula for the topological entropy of an NSFT. The proof of the following lemma is completely analogous to the stationary case.%

\begin{lemma}\label{lem_noadmwords}
For each $n\geq1$, the number of admissible words $(x_0,\ldots,x_n)$ with $x_i \in \AC_i$ such that $x_0 = \alpha$ and $x_n = \beta$ for fixed $\alpha \in \AC_0$ and $\beta \in \AC_n$, is equal to the matrix entry $L^{(0,n)}_{\alpha\beta}$.
\end{lemma}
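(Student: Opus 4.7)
The plan is to prove this by induction on $n$, exactly mirroring the standard calculation for stationary subshifts of finite type, with the only adjustment being that the product of matrices is ordered rather than a power.

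For the base case $n=1$, a word $(x_0, x_1)$ with $x_0 = \alpha$, $x_1 = \beta$ is admissible precisely when $(L_0)_{\alpha\beta} = 1$, so the count is $(L_0)_{\alpha\beta} = L^{(0,1)}_{\alpha\beta}$, matching the claim.

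For the inductive step, assuming the statement for some $n\geq 1$, I would partition the set of admissible words $(x_0,\ldots,x_{n+1})$ with $x_0=\alpha$ and $x_{n+1}=\beta$ according to the value $\gamma := x_n \in \AC_n$. Given $\gamma$, the word $(x_0,\ldots,x_n=\gamma)$ is admissible with prescribed endpoints $\alpha,\gamma$, contributing $L^{(0,n)}_{\alpha\gamma}$ possibilities by the inductive hypothesis, and the extension by $x_{n+1}=\beta$ is admissible iff $(L_n)_{\gamma\beta}=1$. Summing over $\gamma$ gives
\begin{equation*}
  \sum_{\gamma\in\AC_n} L^{(0,n)}_{\alpha\gamma}\,(L_n)_{\gamma\beta} = \bigl(L^{(0,n)} L_n\bigr)_{\alpha\beta} = L^{(0,n+1)}_{\alpha\beta},
\end{equation*}
which is exactly the claim for $n+1$.

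There is essentially no obstacle here; the argument is a direct transcription of the classical combinatorial identity for counting paths in a directed graph, with time-varying transition matrices in place of a single matrix. The only minor bookkeeping is to make sure the indexing convention $L^{(k,n)} = L_k L_{k+1}\cdots L_{n-1}$ (open at the upper end) is respected when recombining the partial product with the final factor $L_n$.
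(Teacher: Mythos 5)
Your proof is correct and is precisely the standard path-counting induction that the paper has in mind when it omits the proof with the remark that it is ``completely analogous to the stationary case.'' Both the base case and the decomposition over the penultimate symbol $\gamma\in\AC_n$, combined with $L^{(0,n+1)}=L^{(0,n)}L_n$, are exactly right.
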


\begin{definition}
Given $\ep>0$, define for each $n\geq0$%
\begin{equation}\label{eq_defmk}
  m_n(\ep) := \min\left\{ m\geq0\ :\ w(n,n+m) > \frac{1}{\ep} \right\}.%
\end{equation}
\end{definition}

\begin{theorem}\label{thm_te_formula}
The topological entropy of an NSFT is given by%
\begin{equation}\label{eq_topent_form}
  h_{\tp}(\sigma_{\infty}) = \limsup_{n\rightarrow\infty}\frac{1}{n}\log w(0,n) = \limsup_{n\rightarrow\infty}\frac{1}{n}\log\big\|L^{(0,n)}\big\|_{(n)},%
\end{equation}
where $\|\cdot\|_{(n)}$ denotes the norm on $\R^{l_0 \tm l_n}$ given by $\|A\|_{(n)} = \sum_{\alpha,\beta}|A_{\alpha\beta}|$.%
\end{theorem}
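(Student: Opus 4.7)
My plan: The second equality in \eqref{eq_topent_form} is immediate from Lemma \ref{lem_noadmwords}: summing $(L^{(0,n)})_{\alpha\beta}$ over all $\alpha\in\AC_0$ and $\beta\in\AC_n$ counts all admissible words of length $n+1$ starting at position $0$, giving $w(0,n)=\|L^{(0,n)}\|_{(n)}$. For the first equality I would use the characterization $h_{\tp}(\sigma_{\infty})=\lim_{\ep\searrow0}h_{\sep}(\ep,\sigma_{\infty})$ and sandwich $r_{\sep}(n,\ep;\sigma_{\infty})$ between two quantities of the same exponential type as $w(0,n)$.

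\emph{Lower bound.} Choose one representative from each non-empty cylinder of length $n+1$ in $\Sigma^{0,+}_{(L)}$ to form $E_n$ with $|E_n|=w(0,n)$. Two distinct points of $E_n$ first disagree at some position $j\leq n$, so $d_j(\sigma_0^j x,\sigma_0^j y)=1$ and therefore $d_{0,n}(x,y)\geq 1$, so $E_n$ is $(n,\ep)$-separated for every $\ep\leq 1$. This gives $r_{\sep}(n,\ep;\sigma_{\infty})\geq w(0,n)$, and letting $n\rightarrow\infty$ followed by $\ep\searrow 0$ yields $h_{\tp}(\sigma_{\infty})\geq\limsup_n\frac{1}{n}\log w(0,n)$.

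\emph{Upper bound.} Inspecting the definition of $d_k$ one sees that $d_i(\sigma_0^i x,\sigma_0^i y)<\ep$ iff $x,y$ agree at positions $i,i+1,\ldots,i+m_i(\ep)$ (interpreting $m_i(\ep)=\infty$ as $\sigma_0^i x=\sigma_0^i y$), so $d_{0,n}(x,y)<\ep$ iff $x,y$ agree on $\bigcup_{i=0}^n\{i,\ldots,i+m_i(\ep)\}$. The key monotonicity
\begin{equation*}
  m_{i-1}(\ep)\leq m_i(\ep)+1
\end{equation*}
follows from (A1), since every admissible word starting at position $i$ can be prepended by some letter of $\AC_{i-1}$, yielding $w(i-1,N)\geq w(i,N)$ and hence, at $N=i+m_i(\ep)$, the displayed inequality. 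Iterating it shows $\max_{i\leq n}(i+m_i(\ep))=n+m_n(\ep)$, so the union above equals $\{0,1,\ldots,n+m_n(\ep)\}$ and by pigeonhole $r_{\sep}(n,\ep;\sigma_{\infty})\leq w(0,n+m_n(\ep))$.

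Splitting a path through position $n$ gives $w(0,n+m_n(\ep))\leq w(0,n)\cdot w(n,n+m_n(\ep))$, and the minimality in the definition of $m_n(\ep)$ together with $w(n,n+m)\leq L\cdot w(n,n+m-1)$, where $L:=\sup_i l_i<\infty$ by (A2), yields $w(n,n+m_n(\ep))\leq L/\ep$. Hence $r_{\sep}(n,\ep;\sigma_{\infty})\leq(L/\ep)\,w(0,n)$; dividing by $n$, taking $\limsup_n$ and then $\ep\searrow 0$ completes the upper bound. The main technical point is the monotonicity of $i\mapsto i+m_i(\ep)$; the degenerate case $m_n(\ep)=\infty$ (equivalently $\sup_m w(n,n+m)\leq 1/\ep$) forces $\Sigma^{0,+}_{(L)}$ to be finite and makes both sides of \eqref{eq_topent_form} trivially zero, so it can be disposed of separately.
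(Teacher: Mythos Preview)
Your proof is correct and follows essentially the same route as the paper: both use the Bowen characterization via separated/spanning sets, relate $d_{0,n}$-closeness to agreement on an initial block of length $n+m_n(\ep)+1$, and then strip off the $m_n(\ep)$ using the minimality in \eqref{eq_defmk} together with (A2). Your lower bound is in fact a bit cleaner than the paper's --- you observe directly that distinct length-$(n{+}1)$ cylinders are $(n,1)$-separated, so $r_{\sep}(n,\ep)\geq w(0,n)$ for every $\ep\le 1$, whereas the paper first establishes the intermediate formula $h_{\tp}(\sigma_\infty)=\lim_{\ep\searrow 0}\limsup_n \tfrac{1}{n}\log w(0,n+m_n(\ep))$ via $(n,\ep/\sup_i l_i)$-separated sets. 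For the upper bound the paper uses that the same set of cylinder representatives is $(n,\ep)$-spanning, which only requires $w(j,n+m_n)\geq w(n,n+m_n)$; your version via separated sets needs the equivalent monotonicity $m_{i-1}(\ep)\leq m_i(\ep)+1$, which you correctly derive from (A1). Both arguments are equally short, and your handling of the degenerate finite case matches the paper's.
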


\begin{proof}
If $\Sigma^{0,+}_{(L)}$ is finite, the assertion is trivial, and hence we assume $\#\Sigma^{0,+}_{(L)} = \infty$. The existence of $m_n(\ep)$ for every $\ep$ is equivalent to $\lim_{k\rightarrow\infty}w(n,n+k) = \infty$, which is equivalent to $\#\Sigma^{n,+}_{(L)} = \infty$. By assumption, this holds for $n=0$. Because $w(0,n+k) \leq w(0,n-1)w(n,n+k)$, it holds for every $n$. We first prove%
\begin{equation}\label{eq_topent_preform}
  h_{\tp}(\sigma_{\infty}) = \lim_{\ep\searrow0}\limsup_{n\rightarrow\infty}\frac{1}{n}\log w(0,n+m_n(\ep)).%
\end{equation}
For fixed $n\in\N$ and $\ep>0$ let $m_n = m_n(\ep)$ and consider a set $E \subset \Sigma^{0,+}_{(L)}$ containing for every admissible word $(x_0,\ldots,x_{n+m_n})$ a unique $y \in \Sigma^{0,+}_{(L)}$ with $y_i = x_i$ for $0 \leq i \leq n+m_n$. Then $\# E = w(0,n+m_n)$ and any two $x \neq y$ in $E$ must differ in at least one of their first $n+m_n+1$ components. The maximum%
\begin{equation*}
  d_{0,n}(x,y) = \max_{0 \leq j \leq n}d_j(\sigma_0^j(x),\sigma_0^j(y))%
\end{equation*}
is equal to $1$ if $x$ and $y$ differ in one of their first $n+1$ components. If the $(n+l)$-th component ($1 \leq l \leq m_n+1$) is the first in which they differ, then%
\begin{equation*}
  d_j(\sigma_0^j(x),\sigma_0^j(y)) = \frac{1}{w(j,n+l-1)} \geq \frac{1}{w(j,n+m_n)}.%
\end{equation*}
Consequently,%
\begin{equation*}
  d_{0,n}(x,y) \geq \max_{0\leq j\leq n}\frac{1}{w(j,n+m_n)} = \frac{1}{w(n,n+m_n)},%
\end{equation*}
which easily follows from assumption (A1). This inequality is also true in the case, where $x$ and $y$ differ in one of their first $n+1$ components. By definition of $m_n(\ep)$, it follows that $w(n,n+m_n-1) \leq 1/\ep$ in the case where $m_n(\ep) > 0$ (which we assume w.l.o.g., since otherwise $w(n,n+m_n) = l_n$, which is bounded, and we are only interested in small $\ep$). Thus,%
\begin{equation*}
  d_{0,n}(x,y) \geq \frac{1}{w(n,n+m_n)} \geq \frac{1}{w(n,n+m_n-1)l_{n+m_n}} \geq \frac{\ep}{\sup_n l_n},%
\end{equation*}
implying that $E$ is an $(n,\ep/(\sup_n l_n))$-separated set, and hence%
\begin{equation*}
  h_{\tp}(\sigma_{\infty}) \geq \lim_{\ep\searrow0}\limsup_{n\rightarrow\infty}\frac{1}{n}\log w(0,n+m_n(\ep)).%
\end{equation*}
Choosing $y\in E$ for a given $x\in\Sigma^{0,+}_{(L)}$ such that $x_i = y_i$ for $i=0,1,\ldots,n+m_n$, we find%
\begin{equation*}
  d_{0,n}(x,y) = \max_{0\leq j \leq n}d_j(\sigma_0^j(x),\sigma_0^j(y)) \leq \frac{1}{w(n,n+m_n)} < \ep,%
\end{equation*}
and hence $E$ is $(n,\ep)$-spanning, implying the inequality ``$\leq$'' in \eqref{eq_topent_preform}. Note that%
\begin{eqnarray*}
  w(0,n+m_n(\ep)-1) \leq w(0,n-1)w(n,n+m_n(\ep)-1) \leq w(0,n)\frac{1}{\ep},%
\end{eqnarray*}
following from (W3). Consequently, we obtain%
\begin{eqnarray*}
  && \limsup_{n\rightarrow\infty}\frac{1}{n}\log w(0,n+m_n(\ep))\\
	&& \qquad\qquad \leq \limsup_{n\rightarrow\infty}\frac{1}{n}\log\left(w(0,n+m_n(\ep)-1) l_{n+m_n(\ep)}\right)\\
	&& \qquad\qquad \leq \limsup_{n\rightarrow\infty}\frac{1}{n}\log\left(\frac{1}{\ep}l_{n+m_n(\ep)} w(0,n)\right).%
\end{eqnarray*}
Since $(l_n)_{n=0}^{\infty}$ is bounded by assumption (A2), the inequality ``$h_{\tp}(\sigma_{\infty}) \leq \ldots$'' in \eqref{eq_topent_form} follows. The other inequality trivially follows from \eqref{eq_topent_preform}. The second equality in \eqref{eq_topent_form} is an easy consequence of Lemma \ref{lem_noadmwords}.%
\end{proof}

\begin{remark}
\noindent{\em (i)}\, In the second expression for $h_{\tp}(\sigma_{\infty})$ in \eqref{eq_topent_form} we can also use the operator norm $\|\cdot\|_{1,(n)}$ coming from the vector norm $\|x\| = \sum_i |x_i|$, for instance. This follows from the equivalence $\|A\|_{1,(n)} \leq \|A\|_{(n)} \leq l_n\|A\|_{1,(n)}$ and boundedness of $l_n$. Indeed, any of the usual operator norms will do, because they are all equivalent to each other, and the equivalence factors depend on the dimensions of the spaces, which vary within a finite set.%

\noindent{\em (ii)}\, Since $\sigma_{\infty}$ is an equicontinuous sequence of surjective maps, we know from Proposition \ref{prop_te_timeindep} that $h_{\tp}(\sigma_{k,\infty}) = h_{\tp}(\sigma_{l,\infty})$ for any $k,l\geq0$. From the formula of the above theorem this can also be seen directly. Indeed, it holds that $w(1,n) \leq w(0,n) \leq l_0w(1,n)$, where the first inequality is trivial and the second one follows from (W3). This implies%
\begin{equation*}
  \limsup_{n\rightarrow\infty} \frac{1}{n}\log w(0,n) = \limsup_{n\rightarrow\infty} \frac{1}{n}\log w(1,n) = \limsup_{n\rightarrow\infty} \frac{1}{n}\log w(1,n+1),% 
\end{equation*}
and hence $h_{\tp}(\sigma_{0,\infty}) = h_{\tp}(\sigma_{1,\infty})$. The rest follows inductively.%

\noindent{\em (iii)}\, In particular, the entropy formula shows that $h_{\tp}(\sigma_{\infty})$ is finite, since $w(0,k) \leq \prod_{i=0}^{k-1}l_i \leq L^k$ with $L := \sup_{k\geq0}l_k$, and hence $h_{\tp}(\sigma_{\infty}) \leq \log L$.%
\end{remark}

\subsection{Metric entropy}%

Now we turn to the computation of the metric entropy with respect to an appropriately defined IMS. The definition is taken from Fisher \cite{Fis} who writes:%

\emph{\ldots we shall find that many things carry over more or less directly from the stationary case. For example, the measure of maximal entropy of a subshift of finite type has a very simple formula discovered by Shannon in an information theory context (\ldots), involving the left and right Perron-Frobenius eigenvectors of the matrix (the proof that this maximizes entropy within the Markov measures is due to Shannon, while Parry in [\ldots] showed that Shannon's measure gives the maximum over all invariant probability measures; \ldots We follow standard usage in calling this the Parry measure. Here (without worrying about how to define entropy for sequences of maps) we simply replace these eigenvectors by eigenvector sequences, and define a nonstationary Markov chain which gives us an exact analogue of the Parry measure. The formula gives us a sequence of measures on the components, invariant in the sense that one is carried to the next by the shift map.}

Hence, Fisher defines an IMS for an NSFT, which is constructed analogously to the Parry measure for classical subshifts. The aim of this subsection is to compute the metric entropy of this IMS and to compare it with the topological entropy.%

Given $x_i \in \AC_i$ for $k \leq i \leq m$, we define the \emph{cylinder set}%
\begin{equation*}
  [.x_k \ldots x_m] := \left\{ y = (y_k,y_{k+1},\ldots) \in \Sigma^{k,+}_{(L)}\ :\ y_i = x_i,\ i = k,\ldots,m \right\}.%
\end{equation*}
We also define for each $0 \leq k \leq m$ the partition%
\begin{equation*}
  \PC_k^m := \left\{ [.x_k \ldots x_m]\ :\ x_i \in \AC_i \mbox{ and } (L_i)_{x_ix_{i+1}} = 1,\ k \leq i \leq m-1 \right\}.%
\end{equation*}

\begin{proposition}\label{prop_cylinderpartitions}
Let $\mu_{\infty}$ be an IMS for $\sigma_{\infty}$ and let $m_k(\ep)$ be given by \eqref{eq_defmk}. Then, for any $\ep>0$ the sequence $\PC_{\infty,\ep} = (\PC_{k,\ep})_{k=0}^{\infty}$, $\PC_{k,\ep} := \PC_k^{k+m_k(\ep)}$, is admissible and $\diam\PC_{\infty,\ep} < \ep$. Hence, every IMS for $\sigma_{\infty}$ is a fine-scale IMS.%
\end{proposition}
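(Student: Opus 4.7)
The plan is to exploit the fact that each cylinder set is clopen in $\Sigma^{k,+}_{(L)}$, which lets the compact approximations $K_{k,i}$ in the definition of $\EC_{\Mis}$ simply be taken equal to the partition elements $P_{k,i}$ themselves. This bypasses the only measure-theoretic part of admissibility and reduces the statement to a combinatorial verification based on the defining property of $m_k(\ep)$.

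First I would handle the diameter estimate. For any two $y,z$ in the same cylinder $[.x_k \ldots x_{k+m_k(\ep)}]$, they agree in positions $k,\ldots,k+m_k(\ep)$, so the definition of $d_k$ yields $d_k(y,z) \leq 1/w(k,k+m_k(\ep)) < \ep$ directly from the definition of $m_k(\ep)$. Hence $\diam \PC_{k,\ep} < \ep$ uniformly in $k$, which gives $\diam \PC_{\infty,\ep} < \ep$.

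Next I would establish uniform separation between distinct cylinders of $\PC_{k,\ep}$. If $y$ and $z$ belong to distinct cylinders, they first disagree at some position $l^* \in \{k,\ldots,k+m_k(\ep)\}$. When $l^* = k$ the metric gives $d_k(y,z) = 1$; otherwise $d_k(y,z) = 1/w(k,l^*-1) \geq 1/w(k,k+m_k(\ep)-1) \geq \ep$, where the last step uses the minimality of $m_k(\ep)$ (so that $w(k,k+m_k(\ep)-1) \leq 1/\ep$). Either way $d_k(y,z) \geq \ep$, so $D_k(P_{k,i},P_{k,j}) \geq \ep$ for any two distinct elements of $\PC_{k,\ep}$, uniformly in $k$.

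Finally I would bound the cardinality $\#\PC_{k,\ep} = w(k,k+m_k(\ep))$ uniformly; combining (W3) and (W1) gives $w(k,k+m_k(\ep)) \leq w(k,k+m_k(\ep)-1)\cdot l_{k+m_k(\ep)}$, which together with assumption (A2) yields the uniform bound $L/\ep$ with $L := \sup_k l_k$. Setting $K_{k,i} := P_{k,i}$, which is compact because cylinders are clopen in the compact space $\Sigma^{k,+}_{(L)}$, both conditions in the definition of $\EC_{\Mis}$ hold trivially with $\delta := \ep$ and with zero measure loss for any prescribed tolerance, so $\PC_{\infty,\ep} \in \EC_{\Mis}(\mu_{\infty})$. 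Letting $\ep$ run through a null sequence then exhibits $\mu_{\infty}$ as a fine-scale IMS. What might normally be the main obstacle, namely producing compact approximations to the $P_{k,i}$ with controlled measure, simply evaporates because of the ultrametric/zero-dimensional nature of the shift spaces.
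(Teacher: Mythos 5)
Your proof is correct and takes essentially the same route as the paper's: the same diameter estimate, the same cardinality bound $w(k,k+m_k(\ep)) \le w(k,k+m_k(\ep)-1)\, l_{k+m_k(\ep)} \le (\sup_k l_k)/\ep$, and the observation that compactness of cylinder sets lets one take $K_{k,i}=P_{k,i}$ so that the measure condition is vacuous. Your separation constant $\delta=\ep$, extracted directly from the minimality of $m_k(\ep)$, is marginally sharper than the paper's $\ep/\sup_k l_k$, but the difference is immaterial.
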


\begin{proof}
The number of elements in $\PC_{k,\ep}$ is equal to $w(k,k+m_k(\ep))$ and $w(k,k+m_k(\ep)) \leq w(k,k+m_k(\ep)-1)l_{k+m_k(\ep)} \leq (\sup_k l_k)/\ep$. It is clear that $\PC_{k,\ep}$ is a partition of $\Sigma^{k,+}_{(L)}$. It is easy to see that the cylinder sets are closed and hence compact. If $x,y \in \Sigma^{k,+}_{(L)}$ are in two distinct elements of $\PC_{k,\ep}$, these sequences differ in at least one of their first $m_k(\ep)+1$ components, and hence%
\begin{equation*}
  d_k(x,y) \geq \frac{1}{w(k,k+m_k)} \geq \frac{1}{w(k,k+m_k(\ep)-1)l_{k+m_k(\ep)}} \geq \frac{\ep}{\sup_k l_k}.%
\end{equation*}
Obviously, this implies $\PC_{\infty,\ep} \in \EC_{\Mis}(\mu_{\infty})$. For the distance of two points $x,y\in\PC_{k,\ep}$ we obtain $d_k(x,y) \leq w(k,k+m_k(\ep))^{-1} < \ep$, concluding the proof.%
\end{proof}

With Theorem \ref{thm_mecomp} we obtain as an immediate consequence:%

\begin{corollary}\label{cor_nsft_me}
For every IMS $\mu_{\infty}$ of $\sigma_{\infty}$ we have%
\begin{equation*}
  h(\sigma_{\infty},\mu_{\infty}) = \lim_{\ep\searrow0}h(\sigma_{\infty},\mu_{\infty};\PC_{\infty,\ep}).%
\end{equation*}
\end{corollary}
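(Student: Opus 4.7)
The plan is to invoke Theorem \ref{thm_mecomp} with the family of partition sequences furnished by Proposition \ref{prop_cylinderpartitions}. Concretely, I would fix any decreasing null sequence $(\ep_k)_{k=0}^{\infty}$ (say $\ep_k := 1/(k+1)$) and set $\RC^k_{\infty} := \PC_{\infty,\ep_k}$. Proposition \ref{prop_cylinderpartitions} then delivers on a plate that each $\RC^k_{\infty}$ lies in $\EC_{\Mis}(\mu_{\infty})$ and satisfies $\diam \RC^k_{\infty} < \ep_k \to 0$. Applying Theorem \ref{thm_mecomp} to this fine-scale sequence yields
\begin{equation*}
  h(\sigma_{\infty},\mu_{\infty}) = \lim_{k\rightarrow\infty} h(\sigma_{\infty},\mu_{\infty};\PC_{\infty,\ep_k}).
\end{equation*}

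To upgrade this sequential limit to the continuum limit $\lim_{\ep\searrow0}$ asserted in the statement, I would verify that $\ep \mapsto h(\sigma_{\infty},\mu_{\infty};\PC_{\infty,\ep})$ is monotone non-increasing. If $\ep_1 < \ep_2$, the minimality clause in the definition \eqref{eq_defmk} of $m_k(\ep)$ forces $m_k(\ep_1) \geq m_k(\ep_2)$ for every $k$, since once $w(k,k+m)$ exceeds $1/\ep_2$ it must be extended further before exceeding $1/\ep_1$. Consequently every cylinder in $\PC_{k,\ep_2}$ is a disjoint union of cylinders of $\PC_{k,\ep_1}$, i.e.\ $\PC_{k,\ep_1}$ refines $\PC_{k,\ep_2}$ coordinatewise. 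By standard monotonicity of entropy under refinement (applied after taking joins under the pull-backs $\sigma_0^{-i}$), this passes to $h(\sigma_{\infty},\mu_{\infty};\PC_{\infty,\ep_1}) \geq h(\sigma_{\infty},\mu_{\infty};\PC_{\infty,\ep_2})$.

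Given monotonicity, the continuum limit $\lim_{\ep\searrow0} h(\sigma_{\infty},\mu_{\infty};\PC_{\infty,\ep})$ exists as a supremum over $\ep>0$ and must coincide with any sequential limit along a decreasing null sequence. Combining this with the identification of the sequential limit as $h(\sigma_{\infty},\mu_{\infty})$ above concludes the proof.

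I do not anticipate any genuine obstacle here, since the corollary is essentially a repackaging of Theorem \ref{thm_mecomp} once Proposition \ref{prop_cylinderpartitions} has been proved; the only care needed is the small bookkeeping argument turning a sequential limit into the continuum limit, handled by the monotonicity observation above.
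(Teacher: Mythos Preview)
Your proposal is correct and follows the same route as the paper, which simply states that the corollary is an immediate consequence of Theorem~\ref{thm_mecomp} (combined with Proposition~\ref{prop_cylinderpartitions}) without further argument. Your added monotonicity observation, converting the sequential limit guaranteed by Theorem~\ref{thm_mecomp} into the continuum limit $\lim_{\ep\searrow0}$, is a detail the paper leaves implicit; it is handled exactly as you describe, via the refinement $\PC_{k,\ep_1} \succeq \PC_{k,\ep_2}$ for $\ep_1<\ep_2$ and \cite[Prop.~9(iii)]{Ka1}.
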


The following definition, (its first part is taken from \cite[Def.~2.3]{Fis}) generalizes the concept of primitivity for single matrices.%

\begin{definition}\label{defn3.21}
The sequence $(L_i)_{i=0}^{\infty}$ is called \emph{primitive} if for each $i\geq0$ there exists $n>i$ such that all entries of $L^{(i,n)}$ are strictly positive. We denote by $N_i$ the least integer $n$ such that all entries of $L^{(i,n)}$ are strictly positive and call the sequence $(L_i)_{i=0}^{\infty}$ \emph{uniformly primitive} if the sequence $(N_i)_{i=0}^\infty$ is bounded.%
\end{definition}

In the rest of the subsection, additionally to (A1) and (A2), we assume that%
\begin{enumerate}
\item[(A3)] The sequence $(L_i)_{i=0}^{\infty}$ is primitive.%
\end{enumerate}

\begin{definition}
Given a sequence $(A_i)_{i=0}^{\infty}$ of $l_i \tm l_{i+1}$ nonnegative real matrices, a sequence $(v_i)_{i=0}^{\infty}$ of nonzero column vectors satisfying $A_iv_{i+1} = \lambda_i v_i$ or row vectors satisfying $v_i^t A_i = \lambda_i v_{i+1}^t$, with nonzero numbers $\lambda_i$, for all $i\geq0$ is called a \emph{column} or \emph{row eigenvector sequence} with \emph{eigenvalues} $\lambda_i$.% 
\end{definition}

We write $C_i^+$ for the positive cone in $\R^{l_i}$, i.e., the set of all column vectors with nonnegative entries only. For the matrix sequence $(L_i)_{i=0}^{\infty}$ we define%
\begin{equation*}
  \widehat{\Omega}_{(L)} := \left\{ \widehat{\mathbf{w}}_\infty = (\widehat{\mathbf{w}}_i)_{i=0}^\infty \mbox{ with } \widehat{\mathbf{w}}_i = L_i\widehat{\mathbf{w}}_{i+1} \mbox{ and } \widehat{\mathbf{w}}_i \in \inner C_i^+ \right\}.%
\end{equation*}
Analogously, we write $R_i^+$ for the positive cone in $^t\R^{l_i}$, i.e., the set of all row vectors with nonnegative entries only, and we define%
\begin{equation*}
  \widehat{\Omega}_{(L)}^t := \left\{ \widehat{\mathbf{v}}^t_\infty = (\widehat{\mathbf{v}}_i^t)_{i=0}^\infty \mbox{ with } \widehat{\mathbf{v}}_i^t L_i = \widehat{\mathbf{v}}_{i+1}^t \mbox{ and } \widehat{\mathbf{v}}_i \in \inner R_i^+ \right\}.%
\end{equation*}

The following lemma can be found in \cite[Lem.~4.2]{Fis}.%

\begin{lemma}\label{lem_eigenvectorseqs}
For a reduced and primitive sequence $(L_i)_{i=0}^{\infty}$ the sets $\widehat{\Omega}_{(L)}$ and $\widehat{\Omega}_{(L)}^t$ are nonempty. A sequence in $\widehat{\Omega}_{(L)}^t$ is determined by choosing a strictly positive first element $\widehat{\vB}_0^t$. If $(\widehat{\mathbf{w}}_i)_{i=0}^\infty\in\widehat{\Omega}_{(L)}$, then $\|\widehat{\mathbf{w}}_i\|\ge\|\widehat{\mathbf{w}}_{i+1}\|$ for $i\ge0$.%
\end{lemma}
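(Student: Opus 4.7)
The plan is to prove the three assertions in order of increasing difficulty, handling first the forward-recursive row case, then the norm inequality, and finally the existence of $\widehat{\Omega}_{(L)}$ via a compactness limit.

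\emph{Row eigenvector sequences.} For $\widehat{\Omega}_{(L)}^t$, given a strictly positive $\widehat{\vB}_0^t$, I define $\widehat{\vB}_{i+1}^t := \widehat{\vB}_i^t L_i$ inductively. Reducedness means every column of $L_i$ has at least one nonzero (hence $=1$) entry, so each component of $\widehat{\vB}_{i+1}^t$ is a nonnegative sum containing at least one strictly positive summand, and therefore positive. Both existence and the uniqueness given $\widehat{\vB}_0^t$ fall out of this construction, since the defining relation forces every term of the sequence.

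\emph{Norm monotonicity.} I plan to prove this in the $\ell^1$ norm via a direct calculation: for $\widehat{\wB}_i = L_i \widehat{\wB}_{i+1}$ with $\widehat{\wB}_{i+1} \geq 0$,
\begin{equation*}
  \|\widehat{\wB}_i\|_1 \;=\; \sum_j \sum_k (L_i)_{jk} (\widehat{\wB}_{i+1})_k \;=\; \sum_k c_k (\widehat{\wB}_{i+1})_k,
\end{equation*}
where $c_k = \sum_j (L_i)_{jk}$ is the $k$-th column sum of $L_i$. The $0$-$1$ entries together with reducedness (no zero columns) force $c_k \geq 1$, yielding $\|\widehat{\wB}_i\|_1 \geq \|\widehat{\wB}_{i+1}\|_1$.

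\emph{Column eigenvector sequences by compactness.} Here the forward recursion is unavailable, so I build an element of $\widehat{\Omega}_{(L)}$ by a truncation-and-limit argument. For each $n$, set $\wB^{(n)}_n := \mathbf{1}_{l_n}$ and $\wB^{(n)}_i := L^{(i,n)} \mathbf{1}_{l_n}$ for $0 \leq i < n$; these are strictly positive by reducedness and satisfy $\wB^{(n)}_i = L_i \wB^{(n)}_{i+1}$ by construction. Normalizing $\widetilde{\wB}^{(n)}_i := \wB^{(n)}_i / \|\wB^{(n)}_0\|_1$ and applying the norm monotonicity above, I obtain $\|\widetilde{\wB}^{(n)}_i\|_1 \leq 1$ for every $i$, so each vector lies in the unit ball of $\R^{l_i}$. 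Since the dimensions $l_i$ are uniformly bounded by (A2), a standard diagonal extraction yields a subsequence $n_k \to \infty$ along which $\widetilde{\wB}^{(n_k)}_i \to \widehat{\wB}_i \in C_i^+$ for every $i$, and the identity $\widehat{\wB}_i = L_i \widehat{\wB}_{i+1}$ passes to the limit.

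\emph{Strict positivity --- the main obstacle.} The real work is to show that each limit $\widehat{\wB}_i$ lies in $\inner C_i^+$, i.e., has no vanishing component; this is exactly where primitivity enters, and without it the $\widetilde{\wB}^{(n_k)}_i$ could escape to the boundary of the cone. For a fixed $i$, primitivity furnishes $N$ and $\eta > 0$ so that every entry of $L^{(i,N)}$ is at least $\eta$. Then for $n \geq N$,
\begin{equation*}
  (\wB^{(n)}_i)_j \;=\; \bigl(L^{(i,N)} \wB^{(n)}_N\bigr)_j \;\geq\; \eta \, \|\wB^{(n)}_N\|_1,
\end{equation*}
while the fixed finite product $L^{(0,N)}$ gives $\|\wB^{(n)}_0\|_1 \leq C \|\wB^{(n)}_N\|_1$ for a constant $C$ independent of $n$. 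Taking the ratio yields a uniform positive lower bound on each component of $\widetilde{\wB}^{(n)}_i$, which is inherited by the limit $\widehat{\wB}_i$. This finishes the existence of $\widehat{\Omega}_{(L)}$ and completes the proof.
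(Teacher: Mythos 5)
Your proof is correct. Note that the paper offers no argument of its own for this lemma --- it is quoted from Fisher \cite[Lem.~4.2]{Fis} --- so a self-contained proof is a genuine addition. All three steps check out: the forward recursion $\widehat{\vB}_{i+1}^t=\widehat{\vB}_i^tL_i$ together with the absence of zero columns gives existence, strict positivity and uniqueness of the row sequences; the column-sum identity $\|\widehat{\wB}_i\|_1=\sum_k c_k(\widehat{\wB}_{i+1})_k$ with $c_k\ge1$ gives the monotonicity in exactly the norm the paper uses; and the truncations $\wB_i^{(n)}=L^{(i,n)}\mathbf{1}_{l_n}$ with diagonal extraction produce a limit in $\widehat{\Omega}_{(L)}$. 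You also put the emphasis in the right place: strict positivity of the limit is the only point where primitivity (A3) is needed, and your bound $(\widetilde{\wB}_i^{(n)})_j\ge\eta/C$ is legitimately allowed to depend on $i$, since all that matters is that each fixed $\widehat{\wB}_i$ stays in $\inner C_i^+$. Structurally, your construction is a concrete instance of Fisher's: he obtains $\widehat{\wB}_i$ from the nested cone intersection $\bigcap_nL^{(i,i+n)}C_{i+n}^+$ (this is how the paper later identifies $\wB_i$ in the example at the end of Section 4), whereas you follow the specific rays $L^{(i,n)}\mathbf{1}_{l_n}$ through those cones and pass to a subsequential limit. The two routes deliver the same conclusion, but yours avoids any discussion of the geometry of nested cones and makes explicit which hypothesis ((A1) versus (A3)) is responsible for which assertion. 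One cosmetic remark: the diagonal extraction does not actually require (A2); finite-dimensionality of each $\R^{l_i}$ already makes the relevant unit balls compact.
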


We take some $\widehat{\wB}_\infty\in\widehat{\Omega}_{(L)}$ and project each component $\widehat{\wB}_i$ to the unit simplex:%
\begin{equation*}
  \wB_i := \frac{\widehat{\wB}_i}{\|\widehat{\wB}_i\|},\quad \|\wB\| = \sum_{\alpha} |(\wB)_{\alpha}|.%
\end{equation*}
We then normalize the sequence of row vectors in a different way which depends on the choice of $\widehat{\wB}_\infty$: we define $(\vB_i^t)_{i=0}^\infty$ by%
\begin{equation*}
  \vB_i^t := \frac{\widehat{\vB}_i^t}{\widehat{\vB}_i^t\wB_i},\quad i\ge0.%
\end{equation*}
Since the entries of $\wB_i$ are strictly positive, we are not dividing by zero. We define real numbers $\lambda_i = \lambda_i(\widehat{\wB}_\infty)$ by  $\lambda_i:= \|\widehat{\wB}_i\|/\|\widehat{\wB}_{i+1}\| \geq 1$, and we have%
\begin{equation*}
  L_i\wB_{i+1} = \lambda_i\wB_i \mbox{\quad for all\ } i\geq0.%
\end{equation*}
A direct calculation as in \cite[Lem.~4.3]{AFi} yields the following fact.%

\begin{lemma}
Let $(\widehat{\wB}_i)_{i=0}^{\infty} \in \widehat{\Omega}_{(L)}$ and $(\widehat{\vB}^t_i)_{i=0}^{\infty} \in \widehat{\Omega}_{(L)}^t$, and let $(\wB_i)_{i=0}^{\infty}$ and $(\vB_i^t)_{i=0}^{\infty}$ be the corresponding normalized sequences. Then $(\vB^t_i)_{i=0}^{\infty}$ has the same eigenvalues as $(\wB_i)_{i=0}^{\infty}$.%
\end{lemma}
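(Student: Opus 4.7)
The goal is to show that $\vB_i^t L_i = \lambda_i \vB_{i+1}^t$ with the same $\lambda_i = \|\widehat{\wB}_i\|/\|\widehat{\wB}_{i+1}\|$ that already appeared for the column side, so the plan is a direct computation tracking how the two different normalizations interact.

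First I would unpack the definitions. By construction $\vB_i^t = \widehat{\vB}_i^t / (\widehat{\vB}_i^t \wB_i)$ (note the denominator is strictly positive since $\wB_i$ has strictly positive entries and $\widehat{\vB}_i^t \in \inner R_i^+$), and $\widehat{\vB}_i^t L_i = \widehat{\vB}_{i+1}^t$ by the row-eigenvector relation. Multiplying on the right by $L_i$ therefore gives
\begin{equation*}
  \vB_i^t L_i = \frac{\widehat{\vB}_i^t L_i}{\widehat{\vB}_i^t \wB_i} = \frac{\widehat{\vB}_{i+1}^t}{\widehat{\vB}_i^t \wB_i}.
\end{equation*}

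Comparing with $\lambda_i \vB_{i+1}^t = \lambda_i \widehat{\vB}_{i+1}^t/(\widehat{\vB}_{i+1}^t \wB_{i+1})$, the claim reduces to the single scalar identity
\begin{equation*}
  \widehat{\vB}_{i+1}^t \wB_{i+1} = \lambda_i \, \widehat{\vB}_i^t \wB_i.
\end{equation*}
This is the key calculation, and it is immediate: using $\widehat{\vB}_{i+1}^t = \widehat{\vB}_i^t L_i$ and the already-established column relation $L_i \wB_{i+1} = \lambda_i \wB_i$, associativity gives
\begin{equation*}
  \widehat{\vB}_{i+1}^t \wB_{i+1} = (\widehat{\vB}_i^t L_i)\wB_{i+1} = \widehat{\vB}_i^t(L_i \wB_{i+1}) = \lambda_i\,\widehat{\vB}_i^t \wB_i.
\end{equation*}

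Substituting back yields $\vB_i^t L_i = \lambda_i \vB_{i+1}^t$, which is exactly the statement that the normalized row sequence $(\vB_i^t)_{i=0}^{\infty}$ has the same eigenvalues $\lambda_i$ as $(\wB_i)_{i=0}^{\infty}$. There is no real obstacle; the only point requiring care is verifying that the denominator $\widehat{\vB}_i^t \wB_i$ is nonzero so the normalization is well-defined, which follows from the strict positivity of $\wB_i$ and $\widehat{\vB}_i$ guaranteed by $\widehat{\wB}_\infty \in \widehat{\Omega}_{(L)}$ and $\widehat{\vB}_\infty^t \in \widehat{\Omega}_{(L)}^t$.
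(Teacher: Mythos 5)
Your computation is correct and is precisely the ``direct calculation'' the paper invokes (it defers the proof to \cite[Lem.~4.3]{AFi} rather than writing it out): the whole lemma reduces to the scalar identity $\widehat{\vB}_{i+1}^t\wB_{i+1} = \lambda_i\,\widehat{\vB}_i^t\wB_i$, which you obtain exactly as intended by combining $\widehat{\vB}_{i+1}^t = \widehat{\vB}_i^t L_i$ with $L_i\wB_{i+1} = \lambda_i\wB_i$. Your remark on the positivity of the normalizing denominator $\widehat{\vB}_i^t\wB_i$ matches the paper's own justification, so there is nothing to add.
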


From a choice $\widehat{\wB}_i$, $\widehat{\vB}_i^t$ we now define a nonstationary Parry measure. First we define a sequence of row vectors $\mathbf{\pi}^t_\infty = (\mathbf{\pi}_i^t)_{i=0}^{\infty}$ by 
\begin{equation}\label{defpi}
  (\mathbf{\pi}_i^t)_\alpha := (\vB_i)_\alpha(\wB_i)_\alpha,\quad \alpha=1,\ldots,l_i,\quad i\geq 0.%
\end{equation} 
The entries of $\mathbf{\pi}_i^t$ are strictly positive and $\sum_{\alpha}(\mathbf{\pi}_i^t)_{\alpha} = 1$. Then we put $P_i := (1/\lambda_i)W_i^{-1}L_iW_{i+1}$, where $W_i$ is an $(l_i \tm l_i)$-diagonal matrix with the entries of $\mathbf{w}_i$ on the diagonal. Each $P_i$ is a stochastic matrix and $\mathbf{\pi}_i^t P_i =\mathbf{\pi}_{i+1}^t$ for $i\ge0$. For $k \leq m$ we put%
\begin{equation}\label{dfnlambdpr}
  \lambda^{(k,m)} = \lambda^{(k,m)}_{(\wB)} := \left\{\begin{array}{cc}
	                                 1 & \mbox{if } k = m\\
																	 \prod_{i=k}^{m-1}\lambda_i(\mathbf{w}) & \mbox{otherwise}%
																\end{array}\right.,%
\end{equation}
and notice that%
\begin{equation}\label{eq_eigenv_eqs}
  L^{(0,i)}\mathbf{w}_i = \lambda^{(0,i)}_{(\mathbf{w})}\mathbf{w}_0 \mbox{\quad and\quad } \mathbf{v}_0^t L^{(0,i)}=\lambda^{(0,i)}_{(\mathbf{w})}
\mathbf{v}_i^t,\quad i\geq 1.%
\end{equation}
A probability measure on $\Sigma^{k,+}_{(L)}$ is defined on cylinder sets by%
\begin{equation*}
\begin{split}
  \mu_k([.x_k\ldots x_m]) :&= (\mathbf{\pi}^t_k)_{x_k}(P_k)_{x_kx_{k+1}} \cdots (P_{m-1})_{x_{m-1}x_m}\\& = \frac{1}{\lambda^{(k,m)}_{(\mathbf{w})}}(\vB_k^t)_{x_k}(\wB_m)_{x_m}\end{split}
\end{equation*}
and extended uniquely by $\sigma$-additivity. Any such $\mu_{\infty}$ is called a \emph{Parry IMS}.%

In what follows, we say that $\gamma\in\AC_k$ maximizes the sum $\sum_{\alpha}L^{(0,k)}_{\alpha\gamma}$ if 
\[\sum_{\alpha}L^{(0,k)}_{\alpha\gamma}=\max\big\{\sum_{\alpha}L^{(0,k)}_{\alpha\beta}: \beta\in\AC_{k}
\big\}.\]%
We recall that $m_k(\ep)=\min\{m\geq0: w(k,k+m)>\ep^{-1}\}$ from \eqref{eq_defmk} and introduce the notation%
\begin{equation}\label{dfnnn}
  n_k(\ep) = k+m_k(\ep),\quad \widetilde{n}_k(\ep) = \max_{0 \leq i \leq k}(i + m_i(\ep))
\,\text{ for $k\ge0$ and $\ep>0$}.
\end{equation}

Our main results about the metric entropy of NSFT's are the following.
\begin{theorem}\label{thm_nsft_me1}
Let $(\Sigma^{\infty,+}_{(L)},\sigma_{\infty})$ be an NSFT and $\mu_{\infty}$ an associated Parry IMS.%
\begin{enumerate}
\item[(i)] The following inequalities hold:%
\begin{equation}\label{eq_nsft_entest}
   \limsup_{k\rightarrow\infty}\frac{1}{k}\log\lambda^{(0,k)} \leq h(\sigma_{\infty},\mu_{\infty}) \leq \limsup_{k\rightarrow\infty}\frac{1}{k}\log\|L^{(0,k)}\|_{(k)}.%
\end{equation}
\item[(ii)] The inequalities in \eqref{eq_nsft_entest} are equalities, and hence $h(\sigma_{\infty},\mu_{\infty}) = h_{\tp}(\sigma_{\infty})$, provided the following assumption holds: There exist $\ep>0$ and $k_0\in\N$ such that for all $k\geq k_0$ and $m$ such that $w(k,k+m)>1/\ep$ for each $\gamma\in\AC_k$ which maximizes $\sum_{\alpha}L^{(0,k)}_{\alpha\gamma}$ the inequality $L^{(k,k+m)}_{\gamma\beta} \geq 1$ holds for all $\beta\in\AC_{k+m}$.%
\item[(iii)] Another sufficient condition for the equalities in \eqref{eq_nsft_entest} is%
\begin{equation*}
  \limsup_{k\rightarrow\infty}\frac{1}{k}\log\lambda^{(k,k+N_k)} = 0,%
\end{equation*}
where $N_k$ is the smallest integer $n$ such that all entries of $L^{(k,k+n)}$ are strictly positive. In particular, this holds if $(L_i)_{i=0}^{\infty}$ is uniformly primitive.%
\end{enumerate}
\end{theorem}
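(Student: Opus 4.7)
The plan is to treat the three parts in sequence. For part~(i), the upper bound is immediate from the variational inequality~\eqref{eq_varineq} combined with Theorem~\ref{thm_te_formula}. For the lower bound, I invoke Corollary~\ref{cor_nsft_me} and observe that the index sets $\{i,\ldots,i+m_i(\ep)\}$ for $0\le i\le n-1$ cover $\{0,1,\ldots,\tilde{n}_{n-1}(\ep)\}$ contiguously, so the join $\bigvee_{i=0}^{n-1}\sigma_0^{-i}\PC_{i,\ep}$ coincides with the cylinder partition $\PC_0^{\tilde{n}_{n-1}(\ep)}$.

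To evaluate $H_{\mu_0}(\PC_0^N)$, I use the Markov structure of the Parry IMS. The formula $P_j(x_j,x_{j+1})=\lambda_j^{-1}(L_j)_{x_jx_{j+1}}(\wB_{j+1})_{x_{j+1}}/(\wB_j)_{x_j}$ yields $\log P_j(x_j,x_{j+1})=-\log\lambda_j+\log(\wB_{j+1})_{x_{j+1}}-\log(\wB_j)_{x_j}$ for admissible transitions. Substituting into the Markov decomposition $H_{\mu_0}(\PC_0^N)=H(\pi_0)-\sum_{j=0}^{N-1}\sum_{x_j,x_{j+1}}\pi_j(x_j)P_j(x_j,x_{j+1})\log P_j(x_j,x_{j+1})$ and using the invariance $\pi_{j+1}^t=\pi_j^t P_j$, the inner sums telescope to
\[
  H_{\mu_0}(\PC_0^N)=H(\pi_0)+\log\lambda^{(0,N)}+\sum_\alpha\pi_0(\alpha)\log(\wB_0)_\alpha-\sum_\beta\pi_N(\beta)\log(\wB_N)_\beta.
\]
Since $(\wB_N)_\beta\le 1$, the last term is nonnegative, so $H_{\mu_0}(\PC_0^N)\ge\log\lambda^{(0,N)}+C$ with $C$ independent of $N$. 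Applied with $N=\tilde{n}_{n-1}(\ep)\ge n-1$, monotonicity $\lambda^{(0,N+1)}\ge\lambda^{(0,N)}$ yields $(1/n)H_{\mu_0}(\PC_0^{\tilde{n}_{n-1}(\ep)})\ge(1/n)\log\lambda^{(0,n-1)}+O(1/n)$; passing to $\limsup_n$ gives $h(\sigma_\infty,\mu_\infty;\PC_{\infty,\ep})\ge\limsup_k(1/k)\log\lambda^{(0,k)}$ for every $\ep>0$, establishing the lower bound in~\eqref{eq_nsft_entest}.

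For (iii), primitivity at step $k$ gives $L^{(k,k+N_k)}_{\gamma\beta}\ge 1$ for all $\gamma,\beta$, so the eigenvector equation $L^{(k,k+N_k)}\wB_{k+N_k}=\lambda^{(k,k+N_k)}\wB_k$ together with $\sum_\beta(\wB_{k+N_k})_\beta=1$ yields the uniform lower bound $(\wB_k)_\gamma\ge 1/\lambda^{(k,k+N_k)}$. Summing the identity $\lambda^{(0,k)}\wB_0=L^{(0,k)}\wB_k$ over the row index gives $\lambda^{(0,k)}=\sum_\beta(\sum_\alpha L^{(0,k)}_{\alpha\beta})(\wB_k)_\beta\ge\min(\wB_k)\|L^{(0,k)}\|_{(k)}$, so $(1/k)\log\|L^{(0,k)}\|_{(k)}\le(1/k)\log\lambda^{(0,k)}+(1/k)\log\lambda^{(k,k+N_k)}$. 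Nonnegativity of the last term together with the hypothesis $\limsup=0$ forces it to tend to zero, so $\limsup(1/k)\log\|L^{(0,k)}\|_{(k)}\le\limsup(1/k)\log\lambda^{(0,k)}$, closing the gap in~\eqref{eq_nsft_entest}. Uniform primitivity bounds $N_k$ uniformly and each $\lambda_i\le l_i\le L$ (since $\lambda_i$ is a convex combination of the column sums of $L_i$), so $\lambda^{(k,k+N_k)}\le L^{N_k}$ is uniformly bounded. For (ii) the argument is structurally identical, but condition~(ii) furnishes only positivity of the single row of $L^{(k,k+m_k(\ep))}$ indexed by the column-sum maximizer $\gamma^*_k\in\AC_k$ of $L^{(0,k)}$, yielding the weaker bound $(\wB_k)_{\gamma^*_k}\ge 1/\lambda^{(k,k+m_k(\ep))}$. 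Using only the $\gamma=\gamma^*_k$ term in $\lambda^{(0,k)}=\sum_\gamma(\sum_\alpha L^{(0,k)}_{\alpha\gamma})(\wB_k)_\gamma$ together with the maximality bound $\|L^{(0,k)}\|_{(k)}\le l_k\sum_\alpha L^{(0,k)}_{\alpha\gamma^*_k}$ still yields $\|L^{(0,k)}\|_{(k)}/\lambda^{(0,k)}\le L\cdot\lambda^{(k,k+m_k(\ep))}$. The main obstacle is to verify $\limsup(1/k)\log\lambda^{(k,k+m_k(\ep))}=0$: the minimality of $m_k(\ep)$ forces $w(k,k+m_k(\ep))\le L/\ep$ (bounded), which combined with the estimate $\lambda^{(k,k+m)}\|\vB_{k+m}\|_1\le\max(\vB_k)\,w(k,k+m)$ reduces the task to subexponential control on $\max(\vB_k)$, obtained by iterating condition~(ii) along a chain of primitivity windows to show that the entries of $\vB_k$ grow no faster than the product of the bounded factors $\lambda^{(k',k'+m_{k'}(\ep))}$.
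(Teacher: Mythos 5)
Parts (i) and (iii) of your proposal are correct. For (i) you take a more elaborate route than the paper: you work with the fine partitions $\PC_{\infty,\ep}$, identify the join with $\PC_0^{\widetilde{n}_{n-1}(\ep)}$, and compute $H_{\mu_0}(\PC_0^N)$ exactly via the Markov decomposition and telescoping, then use $\log(\wB_N)_\beta\le 0$ and monotonicity of $\lambda^{(0,N)}$. The paper instead uses the one-block cylinder partitions $\PC_k^k$ (admissible since $\#\PC_k^k=l_k$ is bounded and distinct one-cylinders are at distance $1$) and bounds $H_{\mu_0}(\PC_0^n)\ge\log\lambda^{(0,n)}-\log\max_\alpha(\vB_0^t)_\alpha$ directly from the explicit formula $\mu_0([.x_0\ldots x_n])=\lambda^{(0,n)\,-1}(\vB_0^t)_{x_0}(\wB_n)_{x_n}$; your exact identity is a nice refinement but not needed. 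Your (iii) is essentially the paper's argument.

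Part (ii), however, has a genuine gap. You correctly reduce everything to showing $\limsup_k\frac{1}{k}\log\lambda^{(k,k+m_k(\ep))}=0$, but your proposed route to this — the estimate $\lambda^{(k,k+m)}\|\vB_{k+m}\|_1\le\max(\vB_k)\,w(k,k+m)$ plus ``subexponential control on $\max(\vB_k)$ obtained by iterating condition (ii)'' — is not carried out, and as sketched it is circular: with the normalization $\vB_0^t=\oneB^t$ one has $\vB_k^t=\vB_0^tL^{(0,k)}/\lambda^{(0,k)}$, so $\max(\vB_k)=\|L^{(0,k)}\|_1/\lambda^{(0,k)}$, which is exactly the ratio whose subexponential growth is the content of part (ii). The step is in fact immediate if you use the \emph{column} eigenvector sequence instead of the row one: since $\|\wB_{k+m}\|=1$ and each entry of $\wB_{k+m}$ is at most $1$,
\begin{equation*}
  \lambda^{(k,k+m)}=\lambda^{(k,k+m)}\|\wB_k\|=\big\|L^{(k,k+m)}\wB_{k+m}\big\|=\sum_{\gamma,\beta}L^{(k,k+m)}_{\gamma\beta}(\wB_{k+m})_{\beta}\le w(k,k+m),
\end{equation*}
and by minimality of $m_k(\ep)$ one has $w(k,k+m_k(\ep))\le w(k,k+m_k(\ep)-1)\,l_{k+m_k(\ep)}\le(\sup_il_i)/\ep$, a constant. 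This is precisely what the paper does (it writes the chain $\lambda^{(0,k)}=\lambda^{(0,n_k)}/\lambda^{(k,n_k)}\ge\|L^{(0,k)}\|_1/w(k,n_k)\ge\ep\|L^{(0,k)}\|_1/\sup_il_i$), and with this replacement your argument for (ii) closes.
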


\begin{proof}
(i) The second inequality in \eqref{eq_nsft_entest} follows from \eqref{eq_varineq} and Theorem \ref{thm_te_formula}. To show the first one, we first observe that the sequence $(\PC_k^k)_{k=0}^{\infty}$ of cylinder partitions is admissible, since $\#\PC_k^k = l_k$ is bounded and the distance between two elements $[.x_k]$ and $[.y_k]$ of $\PC_k^k$, $x_k \neq y_k$, is $1$. Hence,%
\begin{align*}
 H_{\mu_0}&\big(\bigvee_{i=0}^n\sigma_0^{-i}\PC_i^i\big) = H_{\mu_0}\left(\PC_0^n\right)\\
	& = \sum_{[.x_0\ldots x_n]}\frac{1}{\lambda^{(0,n)}}(\vB_0^t)_{x_0}(\wB_n)_{x_n}\log\Big(\frac{\lambda^{(0,n)}}{(\vB_0^t)_{x_0}(\wB_n)_{x_n}}\Big) \geq \log\frac{\lambda^{(0,n)}}{\max_{\alpha}(\vB_0^t)_{\alpha}},%
\end{align*}
where we use that the components of $\wB_n$ are bounded by $1$ and $\mu_0$ is a probability measure. Dividing by $n$ and letting $n$ go to infinity  gives the desired estimate.%

(ii) It suffices to show that%
\begin{equation*}
  \limsup_{k\rightarrow\infty}\frac{1}{k}\log\lambda^{(0,k)} \geq \limsup_{k\rightarrow\infty}\frac{1}{k}\log\|L^{(0,k)}\|_{(k)}.%
\end{equation*}
To this end, let $n_k = n_k(\ep)$ and observe that $L^{(0,n_k)}\wB_n = \lambda^{(0,n_k)}\wB_0$ yields%
\begin{equation*}
  \lambda^{(0,n_k)} = \sum_{\alpha,\beta}L^{(0,n_k)}_{\alpha\beta}(\wB_{n_k})_{\beta}
	                  = \sum_{\alpha,\gamma,\beta}L^{(0,k)}_{\alpha\gamma}L^{(k,n_k)}_{\gamma\beta}(\wB_{n_k})_{\beta}.%
\end{equation*}
Choose $\gamma'$ that maximizes $\sum_{\alpha}L^{(0,k)}_{\alpha\gamma'}$. Then, using the assumption,%
\begin{equation*}
  \lambda^{(0,n_k)} \geq \sum_{\alpha}L^{(0,k)}_{\alpha\gamma'}\sum_{\beta}(\wB_{n_k})_{\beta} = \big\|L^{(0,k)}\big\|_1,%
\end{equation*}
where $\|\cdot\|_1$ is the operator norm derived from the $\ell^1$-vector norm. Writing $\|\cdot\|_{(n)}$ for the sum norm, this implies%
\begin{eqnarray*}
  \lambda^{(0,k)} &=& \frac{\lambda^{(0,n_k)}}{\lambda^{(k,n_k)}} \geq \frac{\|L^{(0,k)}\|_1}{\|L^{(k,n_k)}\|_{(n_k)}} = \frac{\|L^{(0,k)}\|_1}{w(k,n_k)}\\
	&\geq& \frac{1}{w(k,n_k-1)l_{n_k}} \|L^{(0,k)}\|_1  \geq \frac{\ep}{\sup_i l_i} \|L^{(0,k)}\|_1.%
\end{eqnarray*}
Using that all norms on a finite-dimensional space are equivalent and the equivalence factors stay bounded when the dimensions vary within a finite set, we obtain the assertion.%

(iii) From $L^{(0,k+N_k)}\wB_{k+N_k} = L^{(0,k)}L^{(k,k+N_k)}\wB_{k+N_k} = \lambda^{(0,k+N_k)}\wB_0$ we get%
\begin{eqnarray*}
  \lambda^{(0,k+N_k)} &=& \lambda ^{(0,k+N_k)}\|\wB_0\| = \sum_{\alpha,\gamma,\beta}L^{(0,k)}_{\alpha\gamma}\underbrace{L^{(k,k+N_k)}_{\gamma\beta}}_{\geq1}(\wB_{k+N_k})_{\beta}\\
	&\geq& \|L^{(0,k)}\|_{(k)}\|\wB_{k+N_k}\| = \|L^{(0,k)}\|_{(k)}.%
\end{eqnarray*}
This implies the first assertion in (iii). If $(L_i)_{i=0}^{\infty}$ is uniformly primitive then the sequence $(N_k)_{k=0}^{\infty}$ is bounded, and since $\lambda_i \leq \|L_i\|_{(i)} \leq (\sup_i l_i)^2$, this implies that the sequence $(\lambda^{(k,k+N_k)})_{k=0}^{\infty}$ is bounded.%
\end{proof}

\begin{theorem}\label{thm_nsft_me2}
Let $(\Sigma^{\infty,+}_{(L)},\sigma_{\infty})$ be an NSFT and $\mu_{\infty}$ an associated Parry IMS.%
\begin{enumerate}
\item[(i)] With the notation \eqref{defpi} and \eqref{dfnnn}, we have%
\begin{equation}\label{eq_me_form}
  h(\sigma_{\infty},\mu_{\infty}) = \lim_{\ep\searrow0}\limsup_{k\rightarrow\infty}\frac{1}{k}\sum_{\beta}(\pi_{\widetilde{n}_k(\ep)})_{\beta}\log\sum_{\alpha}L^{(0,\widetilde{n}_k(\ep))}_{\alpha\beta}.%
\end{equation}
\item[(ii)] The following condition is sufficient for the equality of $h(\sigma_{\infty},\mu_{\infty})$ and $h_{\tp}(\sigma_{\infty})$: For every $\delta\in(0,1)$ there exist $\ep>0$ and $k_0\in\N$ such that for all $k\geq k_0$ and each $\gamma\in\AC_k$ which maximizes $\sum_{\alpha}L^{(0,k)}_{\alpha\gamma}$ the inequality $L^{(k,n_k(\ep))}_{\gamma\beta}\geq 1$ holds for all $\beta$ in a set $B\subset\AC_{n_k(\ep)}$ satisfying $\sum_{\beta\in B}(\pi_{n_k(\ep)})_{\beta} \geq 1 - \delta$.%
\end{enumerate}
\end{theorem}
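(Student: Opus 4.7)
For part (i), the plan is to invoke Corollary \ref{cor_nsft_me} and compute $h(\sigma_{\infty},\mu_{\infty};\PC_{\infty,\ep})$ for each fixed $\ep>0$. The combinatorial identity
\begin{equation*}
  \bigvee_{i=0}^{k}\sigma_0^{-i}\PC_{i,\ep}=\PC_0^{\widetilde{n}_k(\ep)}
\end{equation*}
holds because $\sigma_0^{-i}\PC_{i,\ep}$ fixes coordinates $y_i,\ldots,y_{i+m_i(\ep)}$, so the join fixes every coordinate up to the running maximum. Writing $N=\widetilde{n}_k(\ep)$ and inserting the explicit Parry-measure formula $\mu_0([.x_0\ldots x_N])=(\vB_0)_{x_0}(\wB_N)_{x_N}/\lambda^{(0,N)}$, the entropy $H_{\mu_0}(\PC_0^N)$ splits, and evaluating the marginal sums over $x_0$ and $x_N$ with the eigenvector identities \eqref{eq_eigenv_eqs} produces
\begin{equation*}
  H_{\mu_0}(\PC_0^N)=\log\lambda^{(0,N)}-\sum_\alpha(\pi_0)_\alpha\log(\vB_0)_\alpha-\sum_\beta(\pi_N)_\beta\log(\wB_N)_\beta.
\end{equation*}
Rewriting $\log\lambda^{(0,N)}=\sum_\beta(\pi_N)_\beta\log\lambda^{(0,N)}$ and substituting $\lambda^{(0,N)}(\vB_N)_\beta=\sum_\alpha(\vB_0)_\alpha L^{(0,N)}_{\alpha\beta}$ recombines the three pieces into
\begin{equation*}
  H_{\mu_0}(\PC_0^N)=\sum_\beta(\pi_N)_\beta\log\sum_\alpha L^{(0,N)}_{\alpha\beta}+H(\pi_N)+O(1),
\end{equation*}
where the $O(1)$ absorbs the $\vB_0$-dependence (since $\vB_0$ is a fixed strictly positive vector) and the discrepancy between $\log\sum_\alpha(\vB_0)_\alpha L^{(0,N)}_{\alpha\beta}$ and $\log\sum_\alpha L^{(0,N)}_{\alpha\beta}$; both $H(\pi_N)$ and $O(1)$ are bounded by $\log\sup_i l_i$ thanks to (A2) and vanish upon dividing by $k$ and taking $\limsup$, yielding \eqref{eq_me_form}.

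For part (ii), the plan is to substitute the hypothesis into \eqref{eq_me_form}. Given $\delta\in(0,1)$, let $\ep$ and $k_0$ be as in the hypothesis. For each large $k$, pick $i_k\in\{0,\ldots,k\}$ with $\widetilde{n}_k(\ep)=n_{i_k}(\ep)$; one checks $i_k\to\infty$ using that $n_j\geq n_i$ whenever $j\geq n_i$. For $k$ large enough that $i_k\geq k_0$, choose $\gamma\in\AC_{i_k}$ maximizing $\sum_\alpha L^{(0,i_k)}_{\alpha\gamma}$, so that $\sum_\alpha L^{(0,i_k)}_{\alpha\gamma}\geq w(0,i_k)/\sup_i l_i$, and invoke the hypothesis (applied at stage $i_k$) to obtain $B_k\subset\AC_{\widetilde{n}_k(\ep)}$ with $\sum_{\beta\in B_k}(\pi_{\widetilde{n}_k(\ep)})_\beta\geq 1-\delta$ and $L^{(i_k,\widetilde{n}_k(\ep))}_{\gamma\beta}\geq 1$ for $\beta\in B_k$. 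Then
\begin{equation*}
  \sum_\alpha L^{(0,\widetilde{n}_k(\ep))}_{\alpha\beta}\geq\sum_\alpha L^{(0,i_k)}_{\alpha\gamma}\cdot L^{(i_k,\widetilde{n}_k(\ep))}_{\gamma\beta}\geq\frac{w(0,i_k)}{\sup_i l_i}\quad(\beta\in B_k),
\end{equation*}
while for $\beta\notin B_k$, reducedness (A1) gives $\sum_\alpha L^{(0,\widetilde{n}_k(\ep))}_{\alpha\beta}\geq 1$ so those summands contribute nonnegatively. Substituting into \eqref{eq_me_form} yields $h(\sigma_\infty,\mu_\infty;\PC_{\infty,\ep})\geq(1-\delta)\limsup_{k\to\infty}\frac{1}{k}\log w(0,i_k)$; letting $\ep\searrow 0$, $\delta\searrow 0$, and combining with \eqref{eq_varineq}, gives the desired equality, provided the right-hand side equals $h_{\tp}(\sigma_\infty)$.

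The key hurdle is precisely this last point: matching $\widetilde{n}_k(\ep)$ with $n_k(\ep)$, i.e., showing $\limsup_k\frac{1}{k}\log w(0,i_k)=h_{\tp}(\sigma_\infty)$. The case $h_{\tp}(\sigma_\infty)=0$ is trivial, so assume $h_{\tp}(\sigma_\infty)>0$ and let $k_j\to\infty$ realize the topological-entropy $\limsup$. If $i_{k_j}<k_j$ then some $i<k_j$ satisfies $n_i(\ep)>n_{k_j}(\ep)$, i.e., $m_i(\ep)>k_j-i$, whence $w(i,k_j-1)\leq w(i,i+m_i(\ep)-1)\leq 1/\ep$; combined with (W3) this forces $\log w(0,k_j)\leq\log w(0,i-1)+O(1)$, which in turn forces $i/k_j\to 1$ (otherwise $h_{\tp}(\sigma_\infty)$ would be strictly smaller). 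Hence $i_{k_j}/k_j\to 1$ and $\frac{1}{k_j}\log w(0,i_{k_j})\to h_{\tp}(\sigma_\infty)$, closing the argument. I expect this indexing analysis to be the real technical work in (ii).
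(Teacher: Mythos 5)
Your proof is correct. Part (i) is essentially the paper's own computation: the same join identity $\bigvee_{i=0}^k\sigma_0^{-i}\PC_{i,\ep}=\PC_0^{\widetilde{n}_k(\ep)}$ followed by the same evaluation of $H_{\mu_0}(\PC_0^N)$ via the eigenvector identities; the paper merely normalizes $\vB_0^t$ to the all-ones vector instead of carrying your $O(1)$ term, which is cosmetic. For part (ii), however, you take a genuinely different and considerably harder route. The paper never introduces the maximizing index $i_k$: it observes that $\widetilde{n}_k(\ep)\ge n_k(\ep)$, so the join $\bigvee_{i=0}^k\sigma_0^{-i}\PC_{i,\ep}$ \emph{refines} $\PC_0^{n_k(\ep)}$, whence $H_{\mu_0}\big(\bigvee_{i=0}^k\sigma_0^{-i}\PC_{i,\ep}\big)\ge H_{\mu_0}\big(\PC_0^{n_k(\ep)}\big)$ and one may simply replace $\widetilde{n}_k(\ep)$ by $n_k(\ep)$ in the lower bound; the hypothesis is then applied directly at stage $k$ with the splitting $L^{(0,n_k(\ep))}=L^{(0,k)}L^{(k,n_k(\ep))}$, exactly as in your display but with $i_k$ replaced by $k$. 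This refinement trick makes the entire indexing analysis --- which you rightly identify as the technical core of your version --- unnecessary. That said, your analysis does close: the decisive inequality is $\log w(0,k_j)\le\log w(0,i_{k_j}-1)+\log(1/\ep)$, obtained from (W3) and the minimality of $m_i(\ep)$, which together with $w(0,i_{k_j})\le w(0,k_j)$ already yields $\frac{1}{k_j}\log w(0,i_{k_j})\to h_{\tp}(\sigma_\infty)$ along the maximizing subsequence; the intermediate claim $i_{k_j}/k_j\to1$ is a detour you do not need (and only a $\limsup$ version of it follows immediately). In short, both arguments are valid; the paper's buys a much shorter proof by exploiting monotonicity of entropy under refinement, while yours shows, at the cost of the subsequence analysis, that the lower bound can be extracted from the formula \eqref{eq_me_form} itself without modifying it.
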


\begin{proof}
(i) Let us consider the cylinder partition $\PC_0^k$. Without loss of generality, we may assume that $\vB_0^t$ is the vector all of whose entries are ones. Using Lemma \ref{lem_noadmwords} and \eqref{eq_eigenv_eqs}, we compute%
\begin{align}
 H_{\mu_0}(\PC_0^k)&= \sum_{[.x_0\ldots x_k]}\frac{1}{\lambda^{(0,k)}}(\vB_0^t)_{x_0}(\wB_k)_{x_k}\log\frac{\lambda^{(0,k)}}{(\vB_0^t)_{x_0}(\wB_k)_{x_k}}\allowdisplaybreaks\nonumber\\
& = \frac{1}{\lambda^{(0,k)}}\sum_{x_k}\sum_{x_0}(\vB_0^t)_{x_0}L^{(0,k)}_{x_0x_k}(\wB_k)_{x_k}\log\frac{\lambda^{(0,k)}}{(\wB_k)_{x_k}}\allowdisplaybreaks\nonumber\\
& = \sum_{x_k}(\vB_k^t)_{x_k}(\wB_k)_{x_k}\log\frac{\lambda^{(0,k)}}{(\wB_k)_{x_k}} = \sum_{x_k}(\pi_k)_{x_k}\log\frac{\lambda^{(0,k)}(\vB_k^t)_{x_k}}{(\pi_k)_{x_k}}\allowdisplaybreaks\nonumber\\& 
= \sum_{x_k}(\pi_k)_{x_k}\log\frac{\sum_{x_0}L^{(0,k)}_{x_0x_k}}{(\pi_k)_{x_k}}%\allowdisplaybreaks\nonumber\\& 
= H(\pi_k) + \sum_{\beta}(\pi_k)_{\beta}\log\sum_{\alpha}L^{(0,k)}_{\alpha\beta},\label{abovecomp}%
\end{align}
where $H(\pi_k)$ is the entropy of the probability vector $\pi_k$. Corollary \ref{cor_nsft_me} yields%
\begin{equation*}
  h(\sigma_{\infty},\mu_{\infty}) = \lim_{\ep\searrow0}\limsup_{k\rightarrow\infty}\frac{1}{k}H_{\mu_0}\Big(\bigvee_{i=0}^k \sigma_0^{-i}\PC_{i,\ep}\Big).%
\end{equation*}
Now fix an $\ep>0$ and write $n_k := k + m_k(\ep)$. An element of $\bigvee_{i=0}^k \sigma_0^{-i}\PC_{i,\ep}$ has the form $\bigcap_{i=0}^k \sigma_0^{-i}[.x_i^{(i)} \ldots x_{i + m_i(\ep)}^{(i)}]$. A sequence $x$ is contained in this set iff $\sigma_0^i(x) \in [.x_i^{(i)} \ldots x_{i + m_i(\ep)}^{(i)}]$ for $i = 0,1,\ldots,k$. This is equivalent to $x_{i + j} = [\sigma_0^i(x)]_j = x_{i + j}^{(i)}$ for $0 \leq i \leq k,\ 0 \leq j \leq m_i(\ep)$. But this implies%
\begin{equation*}
  \bigvee_{i=0}^k \sigma_0^{-i}\PC_{i,\ep} = \PC_0^{\widetilde{n}_k}.%
\end{equation*}
Replacing $k$ with $\widetilde{n}_k(\ep)$ in \eqref{abovecomp} and observing that $0 \leq H(\pi_k) \leq \log\sup_{i\geq0}l_i$ for all $k$, formula \eqref{eq_me_form} follows.%

(ii) To prove the equality $h(\sigma_{\infty},\mu_{\infty}) = h_{\tp}(\sigma_{\infty})$ under the given condition, first observe that $\widetilde{n}_k(\ep) \geq k + m_k(\ep)$ implies that%
\begin{equation*}
  \bigvee_{i=0}^k \sigma_0^{-i}\PC_{i,\ep} \mbox{\quad is finer than\quad } \PC_0^{n_k(\ep)}.%
\end{equation*}
Consequently, $H_{\mu_0}(\bigvee_{i=0}^k\sigma_0^{-i}\PC_{i,\ep}) \geq H_{\mu_0}(\PC_0^{n_k(\ep)})$ and, after replacing $\widetilde{n}_k(\ep)$ by $n_k(\ep)$ in \eqref{eq_me_form}, the right-hand side is still a lower bound for the entropy. For a given $\delta\in(0,1)$ we choose $\ep>0$ according to the assumption. We can write%
\begin{equation*}
  \sum_{\alpha}L^{(0,n_k(\ep))}_{\alpha\beta} = \sum_{\alpha,\gamma}L^{(0,k)}_{\alpha\gamma}L^{(k,n_k(\ep))}_{\gamma\beta}.%
\end{equation*}
Choosing $\gamma'$ that maximizes the sum $\sum_{\alpha}L^{(0,k)}_{\alpha\gamma}$, our assumption gives%
\begin{equation*}
  \sum_{\alpha}L^{(0,n_k(\ep))}_{\alpha\beta} \geq \sum_{\alpha}L^{(0,k)}_{\alpha\gamma'} \mbox{\quad for all\ } \beta \in B,%
\end{equation*}
where $B$ is a set with $\sum_{\beta\in B}(\pi_{n_k(\ep)})_{\beta} \geq 1 - \delta$. We have $\sum_{\alpha}L^{(0,k)}_{\alpha\gamma'} = \|L^{(0,k)}\|_1$, where $\|\cdot\|_1$ is the operator norm derived from the $\ell^1$-vector norm. Hence,%
\begin{align*}
  h(\sigma_{\infty},\mu_{\infty}) &\geq \limsup_{k\rightarrow\infty}\frac{1}{k}\sum_{\beta \in B}(\pi_{n_k(\ep)})_{\beta}\log\big\|L^{(0,k)}\big\|_1\\
	                                &\geq (1-\delta)\limsup_{k\rightarrow\infty}\frac{1}{k}\log\big\|L^{(0,k)}\big\|_1\\
																	   &= (1-\delta)\limsup_{k\rightarrow\infty}\frac{1}{k}\log\big\|L^{(0,k)}\big\|_{(k)},%
\end{align*}
where we use that all norms on a finite-dimensional space are equivalent and the equivalence factors stay bounded, when the dimensions vary within a finite set. Letting $\delta \searrow 0$ we obtain $h(\sigma_{\infty},\mu_{\infty}) \geq h_{\tp}(\sigma_{\infty})$, using Theorem \ref{thm_te_formula}. The opposite inequality is satisfied by \eqref{eq_varineq}.%
\end{proof}

\begin{remark}
Condition (ii) in Theorem \ref{thm_nsft_me2} is an asymptotic version of condition \ref{thm_nsft_me1}(ii). If we interpret the NSFT as a time-dependent communication network, this condition (roughly speaking) says that the best receivers of messages are also good broadcasters. We can easily check that in the stationary case an even stronger property is always satisfied. Indeed, let $L$ be a quadratic $l\tm l$ reduced and primitive $0$-$1$-matrix. Pick $p$ such that all entries of $L^p$ are positive. The sum norm satisfies $\|L^{n+1}\| \geq \|L^n\|$ for all $n$. Now take $\ep>0$ with $\|L^p\| < 1/\ep$. Hence, if $w(0,m) > 1/\ep$, then $\|L^m\| = w(0,m) > \|L^p\|$, and consequently $m > p$, implying that all entries of $L^m$ are positive.%
\end{remark}

Finally, we present an example of an NSFT satisfying $h_{\tp}(\sigma_{\infty}) = h(\sigma_{\infty},\mu_{\infty})$, which violates assumption (iii) in \eqref{thm_nsft_me1}. Hence, we see that this assumption is only sufficient, but not necessary.%

\begin{example}
We consider a sequence $(L_i)_{i=0}^{\infty}$ of the form%
\begin{equation*}
  A [B \cdots B]_{k_1} A [B \cdots B]_{k_2} A [B \cdots B]_{k_3} A \ldots,%
\end{equation*}
where $[B \cdots B]_k$ stands for a finite sequence of $k$ copies of $B$, $(k_i)_{i=1}^{\infty}$ is an unbounded sequence of positive integers, and%
\begin{equation*}
  A = \left(\begin{array}{ccc}
	                 1 & 1 & 1 \\
									 1 & 1 & 1 \\
									 1 & 1 & 1
						\end{array}\right),\quad B = \left(\begin{array}{ccc}
						                                    1 & 1 & 0 \\
																								1 & 1 & 0 \\
																								0 & 0 & 1 \end{array}\right).%
\end{equation*}
To compute the relevant quantities for the associated NSFT, first observe that%
\begin{equation*}
  B^k = \left(\begin{array}{ccc}
	                2^{k-1} & 2^{k-1} & 0 \\
				 				  2^{k-1} & 2^{k-1} & 0 \\
								  0 & 0 & 1
						  \end{array}\right),\quad A B^k = \left(\begin{array}{ccc}
	                 2^k & 2^k & 1 \\
									 2^k & 2^k & 1 \\
									 2^k & 2^k & 1
						  \end{array}\right).%
\end{equation*}
By induction, one proves that%
\begin{equation*}
  \left(\begin{array}{ccc}
	                2^{k_1} & 2^{k_1} & 1 \\
				 				  2^{k_1} & 2^{k_1} & 1 \\
								  2^{k_1} & 2^{k_1} & 1
						  \end{array}\right)
    \cdots
							  \left(\begin{array}{ccc}
	                2^{k_n} & 2^{k_n} & 1 \\
				 				  2^{k_n} & 2^{k_n} & 1 \\
								  2^{k_n} & 2^{k_n} & 1
						  \end{array}\right) = \prod_{i=1}^{n-1}\left(1 + 2^{k_i+1}\right)\left(\begin{array}{ccc}
	         2^{k_n} & 2^{k_n} & 1\\
					 2^{k_n} & 2^{k_n} & 1\\
					 2^{k_n} & 2^{k_n} & 1
				\end{array}\right).%
\end{equation*}
\emph{Computation of the norm growth rate:} From the assumption that $(k_i)_{i=1}^{\infty}$ is unbounded it follows that $f(n) = \sum_{i=1}^n k_i$ grows faster than any linear function. Together with the estimates $2^{k_i+1} \leq 1 + 2^{k_i+1} \leq 2^{k_i+2}$, we obtain%
\begin{equation*}
  \lim_{n\rightarrow\infty}\frac{1}{n + \sum_{i=1}^n k_i}\log 2^{k_n}\prod_{i=1}^{n-1} (1 + 2^{k_i+1}) = \log 2.%
\end{equation*}
To see that the growth rate on any other subsequence $(l_n)$ cannot be larger than $\log 2$, assume%
\begin{equation*}
  L^{(0,l_n)} = (AB^{k_1}) \cdots (AB^{k_{m(n)}})(AB^{r_n}), \quad 0 \leq r_n \leq k_{m(n)+1}.%
\end{equation*}
The associated growth rate satisfies%
\begin{equation*}
  \limsup_{n\rightarrow\infty}\frac{\log 2^{r_n}\prod_{i=1}^{m(n)} (1 + 2^{k_i+1})}{m(n) + \sum_{i=1}^{m(n)}k_i + 1 + r_n} \leq \log 2 \cdot \limsup_{n\rightarrow\infty}\frac{(\sum_{i=1}^{m(n)}k_i + r_n) + 2m(n)}{(\sum_{i=1}^{m(n)}k_i + r_n) + m(n) + 1}.%
\end{equation*}
Since $\sum_{i=1}^{m(n)}k_i$ grows faster than linear, we see that $\log 2$ is an upper bound for the above expression, and hence the growth rate of $\|L^{(0,n)}\|$ is $\log 2$.%

\emph{Computation of the eigenvalue growth rate:} The vector $\wB_i$ is an element of $\bigcap_n L^{(i,i+n)}C^+_{i+n}$ (cf.~\cite[Lem.~4.2]{Fis}) In the case $L_i = A$, i.e., $i = 0,k_1+1,k_1+k_2+2,\ldots$, this intersection reduces to the ray $\{\lambda \cdot (1,1,1)^T : \lambda \geq 0\}$, and hence%
\begin{equation*}
  \wB_i = \frac{1}{3}\left(\begin{array}{c} 1 \\ 1 \\ 1 \end{array}\right),\quad i = \sum_{j=1}^n k_j + n,\quad n\in\N.%
\end{equation*}
The equality $L^{(i-k_n,i)}\wB_i = \lambda^{(i-k_n,i)}\wB_{i-k_n}$ implies%
\begin{equation*}
 \frac{1}{3}\left(\begin{array}{c} 2^{k_n} \\ 2^{k_n} \\ 1 \end{array}\right) = \frac{1}{3}B^{k_n}\left(\begin{array}{c} 1 \\ 1 \\ 1 \end{array}\right) = \lambda^{(\sum_{j=1}^{n-1} k_j + n, \sum_{j=1}^nk_j + n)}\wB_{\sum_{j=1}^{n-1} k_j + n}.%
\end{equation*}
Taking the norm on both sides gives%
\begin{equation}\label{eq_lambda_finiteprod}
  \lambda^{(\sum_{j=1}^{n-1} k_j + n, \sum_{j=1}^nk_j + n)} = \frac{1}{3}\left(2^{k_n+1} + 1\right).%
\end{equation}
To compute the growth rate of $\lambda^{(0,n)}$, we also determine the numbers $\lambda_0,\lambda_{k_1+1},\lambda_{k_1+k_2+2},\ldots$. From $L_i\wB_{i+1} = \lambda_i\wB_i$ we get%
\begin{equation*}
  A\wB_{\sum_{j=1}^n k_j + n + 1} = \frac{1}{3}\lambda_{\sum_{j=1}^n k_j + n}\left(\begin{array}{c} 1 \\ 1 \\ 1 \end{array}\right).%
\end{equation*}
Looking at this equation componentwise, we see that $1 = \|\wB_{\sum_{j=1}^n k_j + n + 1}\| = (1/3)\lambda_{\sum_{j=1}^n k_j + n}$, and hence $\lambda_{\sum_{j=1}^n k_j + n} = 3$. Consequently, for $l(n) = \sum_{j=1}^n k_j + n$ we obtain $\lambda^{(0,l(n))} = \prod_{i=1}^n\left(2^{k_i+1} + 1\right)$. Since $\sum_{i=1}^n k_i$ grows faster than any linear function and $2^{k_i+1} \leq 2^{k_i+1} + 1 \leq 2^{k_i+2}$, we obtain%
\begin{eqnarray*}
  \lim_{n\rightarrow\infty}\frac{1}{l(n)}\log\lambda^{(0,l(n))} &=& \lim_{n\rightarrow\infty}\frac{1}{l(n)}\sum_{i=1}^n \log (2^{k_i + 1} + 1)\\
	&=& \lim_{n\rightarrow\infty}\frac{1}{l(n)}\sum_{i=1}^n (k_i+1)\log 2 = \log 2.%
\end{eqnarray*}
With the same reasoning as in the case of the norm growth rate, one shows that $\log 2$ is actually equal to $\limsup_n (1/n)\log\lambda^{(0,n)}$.%

\emph{Assumption (iii) in Theorem 4.13 is violated if $k_i = 2^i$:} Observing that for $i = n + \sum_{j=1}^n k_j + 1$ we have $N_i = k_{n+1}+1 = 2^{n+1} + 1$, and using \eqref{eq_lambda_finiteprod}, we obtain%
\begin{eqnarray*}
  \lambda^{(i,i+N_i)} &=& \lambda^{(\sum_{j=1}^n k_j + n + 1,\sum_{j=1}^{n+1} k_j + (n + 1) + 1)}\\
	&=& \frac{\lambda_{\sum_{j=1}^{n+1}k_j + (n+1)+1}}{\lambda_{\sum_{j=1}^n k_j + n}}\lambda^{(\sum_{j=1}^n k_j + n,\sum_{j=1}^{n+1} k_j + (n+1))}	\geq c \cdot \left(2^{k_{n+1}+1} + 1\right)%
\end{eqnarray*}
with a constant $c>0$. Now $\limsup_{i\rightarrow\infty}\frac{1}{i}\log\lambda^{(i,i+N_i)} \geq \log 2$ follows easily.%
\end{example}

\section{Open questions}%

We end with a list of open problems that we were not able to solve and we think are interesting for further research.%
\begin{enumerate}
\item[(1)] Does there exist any IMS which is \emph{not} a fine-scale IMS for a topological NDS with uniformly totally bounded state space?%
\item[(2)] Are there easily checkable conditions under which condition (ii) in Theorem \ref{thm_constcond} is satisfied?%
\item[(3)] Under which conditions can an IMS be transformed in the sense of Corollary \ref{cor_countablelimitset} to obtain a countable number of non-equivalent limit points?%
\item[(4)] Are there examples of NSFT's, for which the topological entropy is strictly larger than the entropy of any Parry IMS? If so, do these systems still satisfy the full variational principle?
\end{enumerate}

\end{document}